\newtheorem{theorem}{Theorem}[section]
\newtheorem{lemma}[theorem]{Lemma}
\newtheorem{proposition}[theorem]{Proposition}
\theoremstyle{definition}\newtheorem{definition}[theorem]{Definition}
\theoremstyle{definition}\newtheorem{example}[theorem]{Example}
\theoremstyle{definition}
\theoremstyle{definition}
\newcommand{\rank}{\operatorname{rank}}
\newcommand{\Iso}{\operatorname{Isom}}
\newcommand{\Aut}{\operatorname{Aut}}
\definecolor{colR}{HTML}{CC6677}
\colorlet{colG}{DarkSeaGreen}
\tikzstyle{vertex}=[circle, draw=white, fill=black, inner sep=0pt, minimum size=4pt]
\tikzstyle{smallvertex}=[circle, line width=1.5pt, draw, fill=black, inner sep=0pt, minimum size=2pt]
\colorlet{ecol}{black!50!white}
\tikzstyle{edge}=[ecol,line width=1.5pt]
\tikzstyle{dedge}=[edge,-latex]
\tikzstyle{redge}=[edge,colR]
\tikzstyle{nedge}=[edge,colG]
\tikzstyle{oedge}=[edge,colR]
\tikzstyle{labelsty}=[font=\scriptsize]
\tikzstyle{axes}=[draw=black!50!white,-latex]
\begin{document}

\title{Quotient graphs of symmetrically rigid frameworks}

\author{Sean Dewar\thanks{
Johann Radon Institute for Computational and Applied Mathematics (RICAM), Austrian Academy of Sciences, 4040 Linz, Austria. E-mail: {\tt sean.dewar@ricam.oeaw.ac.at}} \and
Georg Grasegger\thanks{
Johannes Kepler University Linz, Research Institute for Symbolic Computation, 4040 Linz, Austria. E-mail: {\tt georg.grasegger@jku.at}; } \and
Eleftherios Kastis\thanks{
Mathematics and Statistics, Lancaster University, Lancaster, LA1 4YF, UK. E-mail: {\tt l.kastis@lancaster.ac.uk}} \and
Anthony Nixon\thanks{
Mathematics and Statistics, Lancaster University, Lancaster, LA1 4YF, UK. E-mail: {\tt a.nixon@lancaster.ac.uk}}
}

\maketitle

\begin{abstract}
A natural problem in combinatorial rigidity theory concerns the determination of the rigidity or flexibility of bar-joint frameworks in $\mathbb{R}^d$ that admit some non-trivial symmetry. When $d=2$ there is a large literature on this topic. In particular, it is typical to quotient the symmetric graph by the group and analyse the rigidity of symmetric, but otherwise generic frameworks, using the combinatorial structure of the appropriate group-labelled quotient graph.
However, 
mirroring the situation for generic rigidity, little is known combinatorially when $d\geq 3$. Nevertheless in the periodic case, a key result of Borcea and Streinu \cite{borceastreinu11} characterises when a quotient graph can be lifted to a rigid periodic framework in $\mathbb{R}^d$. We develop an analogous theory for symmetric frameworks in $\mathbb{R}^d$. The results obtained apply to all finite and infinite 2-dimensional point groups, and then in arbitrary dimension they concern a wide range of infinite point groups, sufficiently large finite groups and groups containing translations and rotations. For the case of finite groups we also derive results concerning the probability of assigning group labels to a quotient graph so that the resulting lift is symmetrically rigid in $\mathbb{R}^d$.
\end{abstract} 

\section{Introduction}

A bar-joint \emph{framework} $(G,p)$ in $\mathbb{R}^d$ is an ordered pair consisting of a finite simple graph $G=(V,E)$ and a map $p:V\rightarrow \mathbb{R}^d$ (referred to as a \emph{placement} of $G$). The framework $(G,p)$ is \emph{rigid} if the only edge-length preserving continuous motions of the vertices arise from isometries of $\mathbb{R}^d$, and $(G,p)$ is \emph{flexible} otherwise.

Determining the rigidity of a given framework is a computationally challenging problem \cite{Abbot}.
Hence, most works in the combinatorial rigidity literature proceed by linearising and considering infinitesimal rigidity.
In particular the rigidity map $f_G:\mathbb{R}^{d|V|}\rightarrow \mathbb{R}^{|E|}$, for the framework $(G,p)$, is defined by putting $f_G(p)=(\|p(v)-p(w)\|^2 )_{vw\in E}$, where $\|\cdot \|$ denotes the Euclidean norm.
Then the Jacobean derivative matrix $df_G|_p$ (up to scale) is known as the rigidity matrix and $(G,p)$ is infinitesimally rigid if $G$ is complete on at most $d+1$ vertices or $df_G|_p$ has maximum rank ($=d|V|-\binom{d+1}{2}$).
In the \emph{generic} case, when the coordinates of $p$ form an algebraically independent set over $\mathbb{Q}$, infinitesimal rigidity is equivalent to rigidity \cite{AsimowRothI,AsimowRothII}, both depend only on the underlying graph, and one may apply matrix rank algorithms to test rigidity. 

When $d\leq 2$ the situation is even better and there are complete combinatorial descriptions of rigidity. More precisely, a folklore result says that a graph is rigid on the line if and only if it is connected, while Pollaczek-Geiringer \cite{PollaczekGeiringer} characterised generic rigidity in the plane. Her result is often referred to as Laman's theorem, since it was his paper \cite{Laman} that popularised the result. Pollaczek-Geiringer's characterisation leads quickly to fast deterministic algorithms for testing generic rigidity, see \cite{BergJordan03,PebbleGame,LS08}.
However, when $d\geq 3$ it remains an important open problem to understand rigidity in purely combinatorial terms. Details on this problem and further rigidity theoretic background may be found in \cite{GraverServatius,J16,Whiteley96}.

Many results in rigidity theory are motivated by applications, including those in sensor network localisation, protein structure determination and mechanical engineering. However, the genericity hypothesis is unrealistic in many applications, for example due to measurement error. In fact, in many applications the structures in question exhibit non-trivial symmetry.
This has motivated a number of groups to explore symmetric rigidity over the past two decades. We refer the reader to \cite{Bernstein20,CNSW20,ConnellyGuest,FowlerGuest00,IkeshitaTanigawa18,jordkasztani16,KKM21,MalesteinTheran15,OwenPower10,berndwalter10} and the references therein for details. 

In this article we consider \emph{forced symmetric rigidity}. That is, we take a graph $G$ with a non-trivial automorphism group $\Gamma$ and realise $G$ as a framework $(G,p)$ that is symmetric with respect to a particular geometric realisation of $\Gamma$. We then ask if $(G,p)$ has a symmetry preserving finite flex. As in the generic case, for symmetrically generic frameworks, this is equivalent to having a symmetry preserving infinitesimal flex, which can be expressed as a matrix condition using an analogue of the well-known rigidity matrix (details on this may be found in \cite{berndwalter10}). Complete combinatorial descriptions of symmetric, but otherwise generic, rigidity for various symmetry groups, including all cyclic groups, have been obtained in the plane \cite{jordkasztani16,KKM21,MalesteinTheran15}.
However, important open problems remain. For example, plane frameworks admitting even order dihedral symmetry groups are open. Moreover, when $d\geq 3$, characterising symmetric rigidity in combinatorial terms is a fundamental but wide open problem for all symmetry groups. 

In this article we prove combinatorial results both in $\mathbb{R}^2$ for arbitrary point groups (subgroups of the orthogonal group $O(d)$) and in general dimensions for infinite or sufficiently large point groups and for groups containing translations and rotations. To achieve this, the viewpoint we  adopt is inspired by a theorem of Borcea and Streinu \cite{borceastreinu11}. They studied periodic frameworks in $d$-dimensions and proved an analogue of Laman's theorem in all dimensions utilising additional freedom provided by the periodicity.
Their theorem, roughly speaking, says given a multigraph $G$, we can choose elements of $\mathbb{Z}^d$ to label the edges of $G$ such that the covering graph is periodically rigid in $\mathbb{R}^d$. An earlier result due to Whiteley \cite{ww88}, reformulated as \Cref{t:periodic} below, while not presented as a result about periodic frameworks, essentially considered the special case of periodic frameworks with fixed lattice representations and we give an alternative proof of that result using our new perspective.

In the context of symmetry groups we prove analogues of the theorem of Borcea and Streinu in $\mathbb{R}^2$ for arbitrary point groups. As mentioned, characterisations of forced-symmetric rigidity in the plane are already known for many finite point groups \cite{jordkasztani16,MalesteinTheran15} but our results include the remaining open finite groups as well as infinite point groups. We then prove analogues in $\mathbb{R}^d$ for a variety of `large' groups including translation groups, translation groups with additional point group symmetry, sufficiently dense point groups and sufficiently large point groups respectively. Our final main contribution concerns finite point groups. We present theoretical and computational results concerning the probability that a gain  assignment (labelling of the edges by elements of the symmetry group) results in a symmetrically rigid framework.

We conclude the introduction with a brief outline of what follows. In \Cref{sec:back} we introduce the relevant background on symmetric rigidity. Then, in
 \Cref{sec:plane-1iso,sec:plane-noiso}, we prove combinatorial results in 2-dimensions for all possible point groups using recursive construction techniques. We comment that, to the best of our knowledge, the case of infinite point groups has not previously been studied in rigidity theory. However the techniques in these sections do not seem to be amenable to higher dimensions. Nevertheless we extend the results in \Cref{sec:highd} to arbitrary dimensions for a wide variety of groups. The proofs here use classical combinatorial decomposition results and new geometric-analytic techniques. Here we briefly consider periodic frameworks and translational symmetry, and then develop analytic results for sufficiently dense symmetry groups and point groups of sufficient size. In the final section (\Cref{sec:prob}) we take a probabilistic approach to symmetric rigidity when the group is finite. After giving a number of examples and considering the effect of construction operations on the probability of a gain assignment giving a symmetrically rigid framework, we then prove that there exist infinitely many multigraphs whose probability of being assigned rigid gains, for any finite group, is positive but close to zero.

\section{Symmetric frameworks}
\label{sec:back}

We assume throughout that graphs have no loops or parallel edges,
while multigraphs allow both loops and parallel edges.
We mostly assume that $G$ has a finite set of vertices and edges.
Infinite graphs are allowed in certain circumstances, but these will be clearly designated.

Let $G=(V,E)$ be a multigraph.
We define $\vec{E}$ to be the set of all possible ordered triples $(e,v,w)$ where $e \in E$ and $v,w \in V$ are the source and sink of $e$. Let $\Gamma$ be a group.
A \emph{gain map} is a map $\phi:\vec{E} \rightarrow \Gamma$ where the following holds:
\begin{enumerate}[(i)]
	\item $\phi(e,v,w) = \phi(e,w,v)^{-1}$ for all $(e,v,w) \in \vec{E}$ where $v \neq w$,
	\item for every distinct pair of edges $e,e'$ between vertices $v,w$ we have $\phi(e,v,w) \neq \phi(e',v,w)$, and
	\item $\phi(e,v,v) \neq 1$ for every loop $e\in E$.
\end{enumerate}
We refer to the pair $(G,\phi)$ as a \emph{$\Gamma$-gain graph} or just simply a \emph{gain graph}.

The \emph{covering graph} of a $\Gamma$-symmetric gain graph $(G,\phi)$ is the graph $\mathbf{G} = (\mathbf{V},\mathbf{E})$ where $\mathbf{V} := V \times \Gamma$ and $\{(v,\gamma_v),(w,\gamma_w)\}$ is an edge if and only if there exists $(e,v,w)\in \vec{E}$ where $\phi(e,v,w) = \gamma_v^{-1} \gamma_w$.
We note that there is a unique map $\varphi : \Gamma \rightarrow \Aut (G)$ defined by its construction,
and $\mathbf{G}/\varphi = G$.
It is also immediate that $\mathbf{G}$ is finite if and only if $\Gamma$ is finite.

\begin{figure}[ht]
    \centering
    \begin{tikzpicture}
    \begin{scope}[every loop/.style={min distance=8mm,looseness=5}]
        \node[vertex,label={[labelsty]210:$a$}] (a) at (210:1.5) {};
        \node[vertex,label={[labelsty]-30:$b$}] (b) at (-30:1.5) {};
        \node[vertex,label={[labelsty]180:$c$}] (c) at (90:1.5) {};
        \draw[dedge] (b)to[bend left=15] node[below,labelsty] {0} (a);
        \draw[dedge] (a)to[bend left=15] node[above,labelsty] {$\pi/2$} (b);
        \draw[dedge] (b)to node[right,labelsty] {$\pi$} (c);
        \draw[dedge] (c)to[in=120,out=60,loop] node[above,labelsty] {$\pi/2$} (c);
        \draw[dedge] (c)to node[left,labelsty] {$-\pi/2$} (a);
    \end{scope}
    \begin{scope}[xshift=6cm]
        \draw[axes] (-2.5,0)--(2.5,0);
        \draw[axes] (0,-2.5)--(0,2.5);
        \foreach \r [count=\i] in {0,90,180,270}
        {
            \node[vertex,label={[labelsty]\r:$a_\i$}] (a\i) at (\r+70:1.2) {};
            \node[vertex,label={[labelsty]\r:$b_\i$}] (b\i) at (\r+20:0.4) {};
            \node[vertex,label={[labelsty]\r:$c_\i$}] (c\i) at (\r+85:2) {};
        }
        \foreach \i [remember=\i as \j (initially 4)] in {1,2,3,4}
        {
            \draw[edge] (a\i)edge(b\i);
            \draw[edge] (a\j)edge(b\i);
            \draw[edge] (c\i)edge(a\j);
            \draw[edge] (c\i)edge(c\j);
        }
        \draw[edge] (b1)edge(c3) (b2)edge(c4) (b3)edge(c1) (b4)edge(c2);
        
    \end{scope}
    \end{tikzpicture}
    \caption{A gain graph (left) and its covering graph (right) with respect to 4-fold rotational symmetry.}
    \label{fig:gaincovering}
\end{figure}
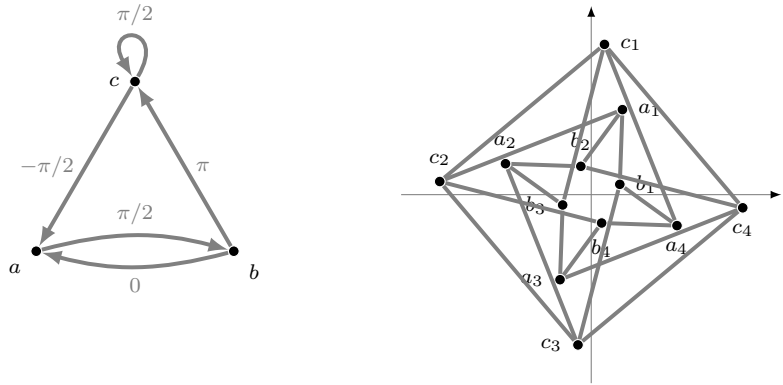

Given a $\Gamma$-symmetric gain graph $(G,\phi)$, we denote by $\mathbf{\tilde{V}}$ the set of vertex representatives $\tilde{v}=(v,1)$ for $v\in V$. Moreover, we fix an orientation on the edges of $G$, so that each edge $e\in E$ is an ordered pair $(v,w)$. For each edge $e=(v,w)\in E$, its edge representative $\tilde{e}$ be the unique edge in $\mathbf{E}$, given by $\tilde{e}=(v,1),(w,\gamma)$.  The set of edge representatives is denoted by $\mathbf{\tilde{E}}$.

\begin{definition}
Let $\Gamma$ be a subgroup of $\Iso(\mathbb{R}^d)$ and let $(G, \phi)$ be a $\Gamma$-gain graph with covering graph $\mathbf{G}$. We say that a placement $\mathbf{p} : \mathbf{V} \rightarrow \mathbb{R}^d$ of $\mathbf{G}$ is \emph{$\Gamma$-symmetric}  if  it satisfies
\[\mathbf{p}(v,\gamma) =\gamma \mathbf{p}(v,1),\,\text{ for all } \gamma\in \Gamma,\, v\in V.\] 
We also say that $(\mathbf{G},\mathbf{p})$ is a $\Gamma$-symmetric framework with gain graph $(G,\phi)$.
\end{definition}

Every isometry on $\mathbb{R}^d$ is the composition of a linear isometry followed by a translation.
For each $\gamma\in\Gamma$, we denote by $\gamma_\ell$ the linear isometry on $\mathbb{R}^d$ that is uniquely defined by the linear part of the affine isometry $\gamma$.

 Recall that a map $\mathbf{u} : \mathbf{V} \rightarrow \mathbb{R}^d$ is an \emph{infinitesimal flex} if it lies in the kernel of the associated \emph{rigidity matrix} $\mathbf{R}(\mathbf{G},\mathbf{p})$ (see, for instance, \cite{GraverServatius}).
 The flex $\mathbf{u}$ is \emph{trivial} if there exists a skew-symmetric matrix $T$ and vector $x$ so that $\mathbf{u}(v,\gamma) = T \mathbf{p}(v,\gamma) + x$ for all $v \in V$ and $\gamma \in \Gamma$.
 The flex $\mathbf{u}$ is \emph{$\Gamma$-symmetric} if it also satisfies $\mathbf{u}(v,\gamma) = \gamma_\ell \mathbf{u}(v,1)$ for all $v \in V$ and $\gamma \in \Gamma$.

\begin{definition}
Let	$(\mathbf{G},\mathbf{p})$ be a $\Gamma$-symmetric framework with gain graph $(G,\phi)$.
The framework $(\mathbf{G},\mathbf{p})$ is \emph{$\Gamma$-symmetrically rigid} if every $\Gamma$-symmetric infinitesimal flex $\mathbf{u} : \mathbf{V} \rightarrow \mathbb{R}^d$  is trivial. The covering graph $\mathbf{G}$ is \emph{$\Gamma$-symmetrically rigid} if there exists a $\Gamma$-symmetrically rigid placement of it, otherwise $\mathbf{G}$ is \emph{$\Gamma$-symmetrically flexible}.
\end{definition} 

The framework in \Cref{fig:gaincovering} is rigid, whereas \Cref{fig:gaincovering-flex} shows a flexible one.

\begin{figure}[ht]
    \centering
    \begin{tikzpicture}
    \begin{scope}[every loop/.style={min distance=8mm,looseness=5}]
        \node[vertex,label={[labelsty]210:$a$}] (a) at (210:1.5) {};
        \node[vertex,label={[labelsty]-30:$b$}] (b) at (-30:1.5) {};
        \node[vertex,label={[labelsty]180:$c$}] (c) at (90:1.5) {};
        \draw[dedge] (b)to[bend left=15] node[below,labelsty] {0} (a);
        \draw[dedge] (a)to[bend left=15] node[above,labelsty] {$\pi/2$} (b);
        \draw[dedge] (c)to[in=120,out=60,loop] node[above,labelsty] {$\pi/2$} (c);
        \draw[dedge] (c)to node[left,labelsty] {$-\pi/2$} (a);
    \end{scope}
    \begin{scope}[xshift=6cm]
        \draw[axes] (-2.5,0)--(2.5,0);
        \draw[axes] (0,-2.5)--(0,2.5);
        \foreach \w/\op in {0/1,-5/0.15}
        {
            \begin{scope}[opacity=\op]
            \foreach \r [count=\i] in {0,90,180,270}
            {
                \node[vertex] (c\i) at (\r+45:2) {};
            }
            \foreach \r [count=\i,remember=\i as \j (initially 4)] in {270,0,90,180}
            {
                \node[vertex,rotate around=\w:(c\i)] (a\j) at (\r+60:1.2) {};
            }
            \foreach \r [count=\i,remember=\i as \j (initially 4)] in {0,90,180,270}
            {
                \path[name path=circ1] (a\i) circle [radius=0.7cm];
                \path[name path=circ2] (a\j) circle [radius=1.1cm];
                \path[name intersections={of=circ1 and circ2}];
                \ifnum\i=2
                    \node[vertex] (b\i) at (intersection-2) {};
                \else
                    \node[vertex] (b\i) at (intersection-1) {};
                \fi
            }
            \foreach \i [remember=\i as \j (initially 4)] in {1,2,3,4}
            {
                \draw[edge] (a\i)edge(b\i);
                \draw[edge] (a\j)edge(b\i);
                \draw[edge] (c\i)edge(a\j);
                \draw[edge] (c\i)edge(c\j);
            }
            \end{scope}
        }
        \foreach \i in {1,2,3,4}
        {
            \node[label={[labelsty]90*\i-90:$a_\i$}] at (a\i) {};
            \node[label={[labelsty,label distance=-5pt]90*\i-45:$c_\i$}] at (c\i) {};
            \node[label={[labelsty,label distance=6pt]90*\i-90:$b_\i$}] at (b\i) {};
        }
    \end{scope}
    \end{tikzpicture}
    \caption{A gain graph (left) and its covering graph (right) with respect to 4-fold rotational symmetry yielding a flexible framework.}
    \label{fig:gaincovering-flex}
\end{figure}
 
There is an alternative way to determine whether a $\Gamma$-symmetric framework $(\mathbf{G},\mathbf{p})$ is $\Gamma$-symmetrically rigid. Given a $\Gamma$-symmetric placement $\mathbf{p}$ of $\mathbf{G}$, define the \emph{orbit placement} $p: V\rightarrow \mathbb{R}^d,~ v \mapsto \mathbf{p}(v,1)$ of $(G,\phi)$,
and define the triple $(G,\phi,p)$ to be an \emph{orbit framework}. Define the \emph{orbit rigidity matrix} $R(G,\phi,p)$ to be the $|E| \times d|V|$ matrix with entries
\begin{align*}
	R(G,\phi,p)_{e,(v,j)}
	:= 
	\begin{cases}
		[p(v) - \gamma p(w)]_j &\text{if } (e,v,w) \in \vec{E}, ~ v \neq w \text{ and } \phi(e,v,w)=\gamma, \\
		[2p(v) - \gamma p(v)-\gamma^{-1}p(v)]_j &\text{if } (e,v,v) \in \vec{E} \text{ and } \phi(e,v,v)=\gamma, \\
		0 &\text{otherwise}.
	\end{cases}
\end{align*}

\begin{proposition}\label{p:gain} 
	Let $\Gamma \leq \Iso(\mathbb{R}^d)$  and $(G, \phi)$ be a $\Gamma$-gain graph with covering graph $\mathbf{G}$.
	Then $\mathbf{u}$ is a $\Gamma$-symmetric flex of $(\mathbf{G},\mathbf{p})$ if and only if $R(G,\phi,p)u =0$,
	where $u(v) := \mathbf{u}(v,1)$ for all $v \in V$.
\end{proposition}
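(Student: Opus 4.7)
The plan is to exploit the bijection between $\Gamma$-orbits of edges in $\mathbf{E}$ and edges of $G$, and to show that within each orbit the covering-graph rigidity equations all collapse to the single row of $R(G,\phi,p)$ indexed by that edge. First, I would recall that $\mathbf{u}$ lies in the kernel of $\mathbf{R}(\mathbf{G},\mathbf{p})$ if and only if $(\mathbf{p}(v,\alpha)-\mathbf{p}(w,\beta))\cdot(\mathbf{u}(v,\alpha)-\mathbf{u}(w,\beta))=0$ for every edge $\{(v,\alpha),(w,\beta)\}\in\mathbf{E}$. Each such edge belongs to a unique $\Gamma$-orbit which arises from some $(e,v,w)\in\vec{E}$ with $\phi(e,v,w)=\gamma$ and contains the representative $\tilde{e}=\{(v,1),(w,\gamma)\}\in\mathbf{\tilde{E}}$.

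Second, I would verify that the rigidity equations at all edges in one orbit reduce to the equation at $\tilde{e}$. For any $\beta\in\Gamma$, because $\beta$ is an affine isometry one has $\beta p(v)-\beta\gamma p(w)=\beta_\ell(p(v)-\gamma p(w))$, as the translation part of $\beta$ cancels; simultaneously, the $\Gamma$-symmetry of $\mathbf{u}$ together with $(\beta\gamma)_\ell=\beta_\ell\gamma_\ell$ gives $\mathbf{u}(v,\beta)-\mathbf{u}(w,\beta\gamma)=\beta_\ell(u(v)-\gamma_\ell u(w))$. Since $\beta_\ell\in O(d)$ preserves the standard inner product, the rigidity equation at every edge in the orbit becomes $(p(v)-\gamma p(w))\cdot(u(v)-\gamma_\ell u(w))=0$.

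Third, I would expand this representative equation to match the $e$-th row of $R(G,\phi,p)$. Writing $\gamma x=\gamma_\ell x+t_\gamma$, the identity $x\cdot \gamma_\ell y=\gamma_\ell^{-1}x\cdot y$ (valid since $\gamma_\ell$ is orthogonal), combined with $\gamma_\ell^{-1}\gamma p(w)=p(w)+\gamma_\ell^{-1}t_\gamma$ and $\gamma^{-1}p(v)=\gamma_\ell^{-1}p(v)-\gamma_\ell^{-1}t_\gamma$, yields the clean identity $(p(v)-\gamma p(w))\cdot\gamma_\ell u(w)=(\gamma^{-1}p(v)-p(w))\cdot u(w)$. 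Substituting gives $(p(v)-\gamma p(w))\cdot u(v)+(p(w)-\gamma^{-1}p(v))\cdot u(w)=0$, which is exactly the $e$-th row of $R(G,\phi,p)$ paired with $u$ in the non-loop case. For a loop at $v$, the same identity collapses the equation into $(2p(v)-\gamma p(v)-\gamma^{-1}p(v))\cdot u(v)=0$, matching the loop entry of the definition.

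The main obstacle is the bookkeeping of translation parts: the orbit rigidity matrix formula has no translational ingredient, so I must verify these parts cancel both when propagating a rigidity equation around an orbit and when collapsing the representative equation into the two-term form used in the definition of $R(G,\phi,p)$. Once the identity above is established, the biconditional is immediate, as the covering-graph rigidity equations for $\mathbf{u}$ reduce in one-to-one fashion to the rows of $R(G,\phi,p)u=0$.
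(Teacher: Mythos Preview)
Your proposal is correct and follows essentially the same route as the paper: reduce every covering-graph edge equation to its orbit representative via the orthogonality of the linear part $\gamma_\ell$, then identify the representative equation with the corresponding row of $R(G,\phi,p)$. You are somewhat more explicit than the paper about tracking and cancelling the translational parts $t_\gamma$ (the paper absorbs this into the identity $\gamma x-\gamma y=\gamma_\ell(x-y)$ without comment), and you spell out the loop case that the paper dismisses with ``we work similarly,'' but the underlying argument is the same.
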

\begin{proof}
Let $\mathbf{u}:\mathbf{V}\rightarrow \mathbb{R}^d$ be a vector such that $\mathbf{u}(v,\gamma) = \gamma_\ell \mathbf{u}(v,1)$ for all $v\in V$ and $\gamma\in \Gamma$. Fix some edge $\mathbf{e}=(v,\gamma),(w,\gamma')\in \mathbf{E}$ with $v\neq w$. Then 
\begin{align*}
(\mathbf{R}(\mathbf{G},\mathbf{p})\mathbf{u})_{\mathbf{e}}&=(\mathbf{p}(v,\gamma)-\mathbf{p}(w,\gamma'))\cdot
(\mathbf{u}(v,\gamma)-\mathbf{u}(w,\gamma'))\\
&=(\gamma_\ell (\mathbf{p}(v,1)-\mathbf{p}(w,\gamma^{-1}\gamma')))\cdot
(\gamma_\ell (\mathbf{u}(v,1)-\mathbf{u}(w,\gamma^{-1}\gamma')))\\
&=(\mathbf{p}(v,1)-\mathbf{p}(w,\gamma^{-1}\gamma'))\cdot
(\mathbf{u}(v,1)-\mathbf{u}(w,\gamma^{-1}\gamma'))\\
&=(\mathbf{p}(v,1)-\mathbf{p}(w,\gamma^{-1}\gamma'))\cdot \mathbf{u}(v,1)+
(\mathbf{p}(w,\gamma^{-1}\gamma')-\mathbf{p}(v,1))\cdot\mathbf{u}(w,\gamma^{-1}\gamma')\\
&=(\mathbf{p}(v,1)-\mathbf{p}(w,\gamma^{-1}\gamma'))\cdot \mathbf{u}(v,1)\\
&\qquad+
((\gamma^{-1}\gamma')_\ell(\mathbf{p}(w,1)-\mathbf{p}(v,\gamma'^{-1}\gamma)))\cdot((\gamma^{-1}\gamma')_\ell\mathbf{u}(w,1))
\\&=(\mathbf{p}(v,1)-\mathbf{p}(w,\gamma^{-1}\gamma'))\cdot \mathbf{u}(v,1)+
(\mathbf{p}(w,1)-\mathbf{p}(v,\gamma'^{-1}\gamma))\cdot\mathbf{u}(w,1)\\&=
(R(G,\phi,p)u)_{e},
\end{align*}
where $u(v) := \mathbf{u}(v,1)$ for all $v \in V$. We work similarly for $v=w$, so the result now follows.
\end{proof}    

We now define a map $u :V \rightarrow \mathbb{R}^d$ to be a \emph{$\Gamma$-symmetric infinitesimal flex} (or \emph{flex} for short) of $(G,\phi,p)$ if it lies in the kernel of the associated orbit rigidity matrix $R(G,\phi,p)$.
The flex $u$ is \emph{trivial} if the corresponding flex $\mathbf{u}$ of the framework $(\mathbf{G},\mathbf{p})$ is trivial.
With this we can now make the following definition of $\Gamma$-symmetric rigidity for the orbit framework $(G,\phi,p)$.

\begin{definition}
Let	$(G,\phi)$ be a $\Gamma$-gain graph and let $(G,\phi,p)$ be an orbit framework.
The orbit framework $(G,\phi,p)$ is \emph{$\Gamma$-symmetrically (infinitesimally) rigid} if every $\Gamma$-symmetric infinitesimal flex $u : V \rightarrow \mathbb{R}^d$ of $(G,\phi,p)$ is trivial. 
The gain graph $(G,\phi)$ is \emph{$\Gamma$-symmetrically rigid} if there exists a $\Gamma$-symmetrically rigid orbit placement of it, otherwise $(G,\phi)$ is \emph{$\Gamma$-symmetrically flexible}.
\end{definition} 

We define an orbit framework $(G,\phi,p)$ to be \emph{regular} if the rank of the orbit rigidity matrix is maximal over the set of orbit placements of $(G,\phi)$.
The set of regular orbit placements of a gain graph $(G,\phi)$ can be seen to be a Zariski open subset of the set of orbit placements.
It follows that either all orbit placements of a gain graph are $\Gamma$-symmetrically flexible,
or almost all orbit placements of a gain graph are $\Gamma$-symmetrically rigid.

Let $G=(V,E)$ be a (multi)graph and let $k,\ell,m$ be non-negative integers where $\ell \geq m$.
For $X\subset V$, let $i_G(X)$ denote the number of edges of $G$ in the subgraph induced by $X$.
We say $G$ is \emph{$(k,l)$-sparse} if $i(X)\leq k|X|-l$ for all $X\subset V$ with $|X|\geq k$ and $G$ is \emph{$(k,l)$-tight} if it is $(k,l)$-sparse and $|E|=k|V|-l$.
Now suppose $\phi$ is a $\Gamma$-symmetric gain map.
A subgraph $H \subset G$ is \emph{balanced} if for all closed walks 
\begin{align*}
	(v_1,e_{12},v_2,\ldots, v_{n-1} , e_{(n-1)n}, v_n)
\end{align*}
(i.e.~$e_{ij}$ has ends $v_i,v_j$ and $v_n=v_1$) in $H$ we have $\phi(e_{(n-1)n},v_{n-1},v_n) \ldots \phi(e_{12},v_{1},v_2) =1$.
We define $(G,\phi)$ to be \emph{$(k,\ell,m)$-gain-sparse} (respectively, \emph{$(k,\ell,m)$-gain-tight}) if $G$ is $(k,m)$-gain-sparse (respectively, $(k,m)$-gain-tight) and every balanced subgraph is $(k,\ell)$-gain-sparse.

For a group $\Gamma \leq \Iso (\mathbb{R}^d)$,
let $\Iso (\Gamma)$ be the group of isometries that preserve $\Gamma$-symmetry;
i.e.~isometries $g:\mathbb{R}^d \rightarrow \mathbb{R}^d$ where for any $\Gamma$-symmetric framework $(\mathbf{G},\mathbf{p})$,
the framework $(\mathbf{G},g \circ \mathbf{p})$ is also a $\Gamma$-symmetric framework.
We define $k(\Gamma) := \dim \Iso(\Gamma)$.
It can be shown that $k(\Gamma)$ is exactly the dimension of the space of trivial infinitesimal flexes of a $\Gamma$-symmetric framework with an orbit placement whose vertices affinely span $\mathbb{R}^d$.

\begin{example}
    If $\Gamma$ is a group of rotations in the plane,
    then $k(\Gamma)=1$.
    Likewise,
    if $\Gamma$ is a group generated by a reflection in a given line,
    then $k(\Gamma)=1$ also.
    However,
    if $\Gamma$ is generated by a reflection and at least one rotation,
    then $k(\Gamma)=0$.
    See \Cref{l:d2k} for more details.
\end{example}

We now finish this section by describing a necessary condition for a $\Gamma$-symmetric graph to be $\Gamma$-symmetrically rigid.

\begin{proposition}\label{prop:necgain}
    For a group $\Gamma \leq \Iso (\mathbb{R}^d)$,
    let $(G,\phi)$ be a $\Gamma$-symmetric gain graph with at least $d$ vertices.
    Fix $D = \binom{d+1}{2}$ and $k = k(\Gamma)$.
    If $(G,\phi)$ is $\Gamma$-symmetrically rigid and $|V| \geq d$,
    then it contains a $(d, D, k)$-gain-tight spanning subgraph.
\end{proposition}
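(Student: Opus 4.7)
The plan is to work with the orbit rigidity matrix $R(G,\phi,p)$ via Proposition~\ref{p:gain}. First I would fix a regular orbit placement $p$; by $\Gamma$-symmetric rigidity the matrix $R(G,\phi,p)$ then has rank exactly $d|V| - k$, where $k = k(\Gamma)$. I would select $E' \subseteq E$ of cardinality $d|V| - k$ whose rows form a basis of the row space of $R(G,\phi,p)$, and let $H = (V, E')$ with the restricted gain assignment. The tightness count $|E(H)| = d|V(H)| - k$ is then immediate, so what remains is to verify the two sparsity conditions on $H$.

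For the $(d,k)$-sparsity of $H$, fix $X \subseteq V$ with $|X| \geq d$. The rows of $R(G,\phi,p)$ indexed by $E(G[X])$ are supported only in the columns indexed by $X$, so these rows form the orbit rigidity matrix of the sub-orbit-framework $(G[X], \phi|_X, p|_X)$. This submatrix has rank at most $d|X| - k$, since its kernel contains the $k$-dimensional space of $\Gamma$-symmetric trivial flexes determined by $\Iso(\Gamma)$. Because $E' \cap E(G[X])$ is a linearly independent subset of these rows, we obtain $i_H(X) \leq d|X| - k$.

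For the $(d,D)$-sparsity of balanced subgraphs, fix a balanced subgraph $B \subseteq H$ with $|V(B)| \geq d$. The key observation is that, since $B$ is balanced, the preimage of $B$ in the covering graph $\mathbf{G}$ decomposes into $|\Gamma|$ pairwise disjoint isomorphic copies of $B$; under $\Gamma$-symmetry, an infinitesimal flex of the orbit framework of $B$ is therefore uniquely determined by, and determines, an ordinary infinitesimal flex of a single copy. Consequently, the kernel of the orbit rigidity matrix of $B$ agrees with the kernel of the standard rigidity matrix of $(B, p|_{V(B)})$ viewed as an ordinary framework in $\mathbb{R}^d$, which has dimension at least $D = \binom{d+1}{2}$. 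This gives the rank bound $\rank R_B \leq d|V(B)| - D$, and the linear independence of $E(B) \subseteq E'$ yields $|E(B)| \leq d|V(B)| - D$, as required.

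The main obstacle I anticipate lies in the balanced-subgraph step: one must verify both that the preimage of a balanced subgraph really is a disjoint union of copies (a standard consequence of the existence of a potential function on vertices of $B$ making all gains trivial) and that the $\Gamma$-symmetric extension of any ordinary flex on a single copy indeed satisfies the orbit rigidity equation edge-by-edge, which ultimately relies on $\gamma_\ell$ being a linear isometry and hence preserving inner products. Once these points are secured the proposition follows cleanly from the rank-counting argument above.
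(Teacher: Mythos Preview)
Your approach is essentially the same as the paper's: select a regular placement, pass to a row-basis subgraph, and bound the rank of each induced orbit rigidity submatrix via the dimension of its trivial flexes. There is one slip worth flagging. In the balanced step you write that the kernel of the orbit rigidity matrix of $B$ agrees with that of the \emph{standard} rigidity matrix of $(B,\,p|_{V(B)})$. It does not: a single copy of $B$ in the cover sits at the placement $q(v)=\psi(v)\,p(v)$ determined by your potential $\psi$, not at $p|_{V(B)}$. The paper makes this explicit by performing the switching directly and identifying $R(B,\phi|_B,p|_B)$ with the ordinary rigidity matrix of $(B,q)$. Your conclusion (nullity $\ge D$) still holds, since any bar-joint framework in $\mathbb{R}^d$ on at least $d$ vertices has rigidity-matrix nullity at least $\binom{d+1}{2}$, but the identification as written is incorrect. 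A second small omission: the paper chooses $p$ in general position (every $t\le d+1$ vertices affinely independent), which is what guarantees that the space of $\Gamma$-symmetric trivial flexes on each induced subframework really has dimension $k$; you should state this hypothesis explicitly before invoking it in the $(d,k)$-sparsity step.
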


\begin{proof}
    Choose a regular orbit placement $p$ of $(G,\phi)$ so that every subset of $t \leq d+1$ vertices is affinely independent\footnote{This is sometimes referred to as $p$ being in general position.}.
    By our choice of orbit placement,
    we have that the nullity of $R(G,\phi,p)$ is $k$.
    We may assume that the orbit rigidity matrix $R(G,\phi,p)$ has independent rows (and hence $|E| = \rank R(G,\phi,p)$),
    as deleting edges of $G$ that correspond to dependent rows in $R(G,\phi,p)$ does not affect the rank or nullity of $R(G,\phi,p)$.
    By the rank-nullity theorem,
    $|E| = \rank R(G,\phi,p) = d|V| - k$.
    
    Choose a subset $X \subset V$ with $|X| \geq d$,
    let $G' = (X,i(X))$, and let $p'$ and $\phi'$ be the restrictions of $p$ and $\phi$ to $H$.
    As the rows of $R(G',\phi',p') \times \mathbf{0}_{i(X) \times |V\setminus X|}$ are a subset of the rows of $R(G,\phi,p)$,
    the rows of $R(G',\phi',p')$ are independent.
    Since the nullity of $R(G',\phi',p')$ must be at least the dimension of the space of trivial infinitesimal flexes of $(G',\phi',p')$ (i.e.~$k$),
    we have $i(X) = \rank R(G,\phi,p) \leq d|X| - k$ by the rank-nullity theorem.
    
    Now suppose that the subgraph $G'$ is balanced.
    We first note that $G'$ cannot have loops or parallel edges as it is balanced.
    Choose a vertex $v_0 \in X$ and for every other vertex $v \in X$ choose a directed path $P_v = (e_1^v,\ldots,e^v_n)$ from $v_0$ to $v$.
    Now define $q$ to be the orbit placement of $G'$ where $q(v) = \phi(e_n^v) \ldots \phi(e_1^v) p(v)$ for every vertex $v \in X$,
    and define $\phi_q$ to be the gain map of $G'$ that assigns only trivial gains to edges.
    Since $X$ is balanced,
    we have that $R(G',\phi_q,q) = R(G',\phi_q,q)$.
    However,
    $R(G',\phi_q,q)$ is exactly the rigidity matrix of the pair $(G',q)$ when they are considered as defining a framework,
    thus the nullity of $R(G',\phi',p')$ is at least $D$.
    We now have $i(X) = \rank R(G,\phi,p) \leq d|X| - D$ by the rank-nullity theorem.
\end{proof}

\section{Cyclic plane symmetry groups}\label{sec:plane-1iso}

In the next two sections we consider symmetric frameworks in the plane for all possible symmetry groups $\Gamma$. We emphasise that this includes infinite point groups, such as any group generated by a rotation of irrational degree. The study of such groups seems to be new in rigidity theory.
It is well known that there is at most a 1-dimensional space of isometries that preserve $\Gamma$-symmetry. Since the only subgroups of $O(2)$ are the $n$-fold rotation groups,
	the two element groups generated by a single reflection or the groups generated by an $n$-fold rotation group and a single reflection, we have the following.

\begin{lemma}\label{l:d2k}
	Let $\Gamma \leq O(2)$ with $n := |\Gamma| \geq 2$ and $k := k(\Gamma)$.
	Then $k \in \{0,1\}$ and the following holds;
	\begin{enumerate}[(i)]
		\item if $k=1$ and $n=2$ then either $\Gamma$ is the $2$-fold rotation group or $\Gamma$ is generated by a single reflection,
		\item if $k=1$ and $3 \leq n < \infty$ then $\Gamma$ is the $n$-fold rotation group,
		\item if $k = 1$ and $n = \infty$ then $\Gamma$ is an infinite rotational group,
		i.e., $\Gamma \leq SO(2)$,
		\item if $k=0$ and $n < \infty$ then $n$ is even and $\Gamma$ is generated by $\frac{n}{2}$-fold rotation group and a single reflection, and
		\item if $k=0$ and $n = \infty$ then $\Gamma$ is generated by an infinite rotational group and a single reflection.
	\end{enumerate}
\end{lemma}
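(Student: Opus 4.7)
The plan is to unfold the definition $k(\Gamma)=\dim\Iso(\Gamma)$ and compute directly, going along the three types of subgroup of $O(2)$ listed just before the statement. Writing an affine isometry of $\mathbb{R}^2$ as $g(x)=Bx+t$, the requirement that $g$ preserves $\Gamma$-symmetry reduces to $g\gamma=\gamma g$ for every $\gamma\in\Gamma$; since every $\gamma\in O(2)$ is linear, this is in turn equivalent to the pair of conditions $BA_\gamma=A_\gamma B$ and $A_\gamma t=t$ holding for all $\gamma\in\Gamma$, where $A_\gamma$ is the orthogonal matrix representing $\gamma$. So computing $k(\Gamma)$ amounts to reading off the identity component of the joint centraliser of $\{A_\gamma:\gamma\in\Gamma\}$ inside $\Iso(\mathbb{R}^2)$.

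First I would handle the pure-rotation case, $\Gamma\leq SO(2)$ with $|\Gamma|\geq 2$. Picking any non-identity $\gamma_0\in\Gamma$, the equation $A_{\gamma_0}t=t$ forces $t=0$ because a non-trivial rotation of $\mathbb{R}^2$ about the origin has no other fixed point, and the identity component of the centraliser of $A_{\gamma_0}$ in $O(2)$ is the full $SO(2)$, which commutes with every element of $\Gamma\leq SO(2)$. Thus $\Iso(\Gamma)$ has identity component equal to the rotation group about the origin and $k(\Gamma)=1$; this yields the $C_2$ part of (i) together with all of (ii) and (iii). Next I would treat $\Gamma=\{I,R\}$ where $R$ is a reflection across a line $L$ through the origin: the condition $Rt=t$ restricts $t$ to $L$, giving one continuous translational parameter, while the centraliser of $R$ in $O(2)$ is the discrete set $\{\pm I,\pm R\}$. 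Hence the identity component of $\Iso(\Gamma)$ is translation along $L$, giving $k(\Gamma)=1$ and completing case (i).

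Finally, if $\Gamma$ is generated by a non-trivial rotation subgroup together with a single reflection $R$, the two previous analyses combine: the rotational part forces $t=0$ and the reflection restricts $B$ to $SO(2)\cap\{\pm I,\pm R\}=\{\pm I\}$, so $\Iso(\Gamma)$ is discrete and $k(\Gamma)=0$. Taking $|\Gamma|$ finite versus infinite then gives cases (iv) and (v) respectively, with the rotation subgroup in (iv) necessarily having order $|\Gamma|/2$ and thereby forcing $|\Gamma|$ to be even. Since every subgroup of $O(2)$ is of one of the three types above by the classification quoted before the statement, the five enumerated cases exhaust all possibilities for the pair $(k(\Gamma),|\Gamma|)$. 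I do not foresee a serious obstacle; the only things needing care are correctly identifying the $O(2)$-centralisers of a rotation and of a reflection, and matching $|\Gamma|=2m$ to a rotational subgroup of order $m$ in case (iv).
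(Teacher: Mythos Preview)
Your argument is correct. The reduction of the $\Gamma$-symmetry-preserving condition to $g\gamma=\gamma g$ is right, the splitting into $BA_\gamma=A_\gamma B$ and $A_\gamma t=t$ is right, and the case analysis of centralisers in $O(2)$ is fine. One small wrinkle: in your final case, when the rotational subgroup of $\Gamma$ happens to be $\{\pm I\}$ only, the centraliser of that rotation in $O(2)$ is all of $O(2)$ rather than $SO(2)$, so the literal intersection you write is $\{\pm I,\pm R\}$ rather than $\{\pm I\}$. This does not affect the conclusion since the set is still discrete and hence $k=0$, but you may want to phrase the step as ``the identity component of the centraliser is contained in $SO(2)$'' to be safe.

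As for comparison with the paper: there is essentially nothing to compare. The paper gives no proof; it states the lemma as an immediate consequence of the standard classification of subgroups of $O(2)$ together with the remark that the space of symmetry-preserving isometries is at most one-dimensional. Your write-up supplies the explicit centraliser computation that the paper leaves to the reader, so it is more detailed but follows the same implicit route.
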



We first focus on the 1-dimensional case and then analyse symmetry groups with no continuous isometries in \Cref{sec:plane-noiso}.
We split this section into two parts depending on the number of elements in the group.

\subsection{Symmetry groups with at least 3 elements}
In this section we consider symmetry groups with at least three elements.

Let $G = (V,E)$ and $G'=(V',E')$ be multigraphs with $V' = V + v_0$. We define the following graph extensions which are used throughout the paper.
See \Cref{fig:extensions1} for illustrations.
\begin{itemize}
    \item We say $G'$ is formed from $G$ by a \emph{$0$-extension} if $E' = E + \{e'_1,e'_2\}$, where the edges $e'_1,e'_2$ have endpoints $v_0,v_1$ and $v_0,v_2$ respectively for some $v_1,v_2 \in V$ (where $v_1$ and $v_2$ may be the same vertex).
    \item We say $G'$ is formed from $G$ by a \emph{$1$-extension} if $E' = E -e + \{e'_1,e'_2,e'_3\}$, where each edge $e'_i$ has endpoints $v_0,v_i$ and $e \in E$ has endpoints $v_1,v_2$ for some $v_1,v_2,v_3 \in V$ (where $v_1$ and $v_2$ may be the same, and $v_3$ may be the same as one or both of $v_1,v_2$).
    \item We say $G'$ is formed from $G$ by a \emph{loop-1-extension} if $E' = E + \{ \ell, e'\}$,
    where $e'$ has endpoints $v_0,v$ for some $v \in V$ and $\ell$ is a loop at $v_0$.
\end{itemize}
\begin{figure}[ht]
    \centering
    \begin{tikzpicture}[every loop/.style={min distance=6mm,looseness=5},eframe/.style={black!30!white,rounded corners},earrow/.style={line width=2pt,-latex}]
        \node[eframe] (e0) at (6,-1) {0-extensions};
        \draw[eframe] (e0) -- ++(7.5,0) -- ++ (0,2.6) -- ++(-15,0) -- ++(0,-2.6) -- (e0);
        \node[eframe] (e1) at (6,-6.5) {1-extensions};
        \draw[eframe] (e1) -- ++(7.5,0) -- ++ (0,5.1) -- ++(-15,0) -- ++(0,-5.1) -- (e1);
        \node[eframe] (l1) at (6,-9.5) {loop-1-extension};
        \draw[eframe] (l1) -- ++(7.5,0) -- ++ (0,2.6) -- ++(-15,0) -- ++(0,-2.6) -- (l1);
        \begin{scope}
            \begin{scope}
                \draw[dashed] (0,0) circle [x radius=1.2cm,y radius=0.5cm];
                \node[vertex,label={[labelsty]below:$v_1$}] (1) at (-0.5,0.2) {};
                \node[vertex,label={[labelsty]below:$v_2$}] (2) at (0.5,0.2) {};
            \end{scope}
            \draw[earrow] (1.5,0)--(2.5,0);
            \begin{scope}[xshift=4cm]
                \draw[dashed] (0,0) circle [x radius=1.2cm,y radius=0.5cm];
                \node[vertex,label={[labelsty]below:$v_1$}] (1) at (-0.5,0.2) {};
                \node[vertex,label={[labelsty]below:$v_2$}] (2) at (0.5,0.2) {};
                \node[vertex,label={[labelsty]above:$v_0$}] (0) at (0,1) {};
                \draw[nedge] (1)edge(0) (2)edge(0);
            \end{scope}
        \end{scope}
        \begin{scope}[xshift=8cm]
            \begin{scope}
                \draw[dashed] (0,0) circle [x radius=1.2cm,y radius=0.5cm];
                \node[vertex,label={[labelsty]below:$v_1=v_2$}] (1) at (0,0.2) {};
            \end{scope}
            \draw[earrow] (1.5,0)--(2.5,0);
            \begin{scope}[xshift=4cm]
                \draw[dashed] (0,0) circle [x radius=1.2cm,y radius=0.5cm];
                \node[vertex,label={[labelsty]below:$v_1=v_2$}] (1) at (0,0.2) {};
                \node[vertex,label={[labelsty]above:$v_0$}] (0) at (0,1) {};
                \draw[nedge] (1)to[bend right=15](0);
                \draw[nedge] (1)to[bend right=-15](0);
            \end{scope}
        \end{scope}
        \begin{scope}[yshift=-3cm]
            \begin{scope}
                \begin{scope}
                    \draw[dashed] (0,0) circle [x radius=1.2cm,y radius=0.5cm];
                    \node[vertex,label={[labelsty]below:$v_1$}] (1) at (-0.6,0.2) {};
                    \node[vertex,label={[labelsty]below:$v_2$}] (2) at (0,0.2) {};
                    \node[vertex,label={[labelsty]below:$v_3$}] (3) at (0.6,0.2) {};
                    \draw[oedge] (1)edge(2);
                \end{scope}
                \draw[earrow] (1.5,0)--(2.5,0);
                \begin{scope}[xshift=4cm]
                    \draw[dashed] (0,0) circle [x radius=1.2cm,y radius=0.5cm];
                    \node[vertex,label={[labelsty]below:$v_1$}] (1) at (-0.6,0.2) {};
                    \node[vertex,label={[labelsty]below:$v_2$}] (2) at (0,0.2) {};
                    \node[vertex,label={[labelsty]below:$v_2$}] (3) at (0.6,0.2) {};
                    \node[vertex,label={[labelsty]above:$v_0$}] (0) at (0,1) {};
                    \draw[nedge] (1)edge(0) (2)edge(0) (3)edge(0);
                \end{scope}
            \end{scope}
            \begin{scope}[xshift=8cm]
                \begin{scope}
                    \draw[dashed] (0,0) circle [x radius=1.2cm,y radius=0.5cm];
                    \node[vertex,label={[labelsty]below:$v_1$}] (1) at (-0.6,0.2) {};
                    \node[vertex,label={[labelsty]below:$v_2=v_3$}] (2) at (0.4,0.2) {};
                    \draw[oedge] (1)edge(2);
                \end{scope}
                \draw[earrow] (1.5,0)--(2.5,0);
                \begin{scope}[xshift=4cm]
                    \draw[dashed] (0,0) circle [x radius=1.2cm,y radius=0.5cm];
                    \node[vertex,label={[labelsty]below:$v_1$}] (1) at (-0.6,0.2) {};
                    \node[vertex,label={[labelsty]below:$v_2=v_3$}] (2) at (0.4,0.2) {};
                    \node[vertex,label={[labelsty]above:$v_0$}] (0) at (0,1) {};
                    \draw[nedge] (1)edge(0) (2)to[bend right=15](0) (2)to[bend right=-15](0);
                \end{scope}
            \end{scope}
            
            \begin{scope}[yshift=-2.5cm]
                \begin{scope}
                    \draw[dashed] (0,0) circle [x radius=1.2cm,y radius=0.5cm];
                    \node[vertex,label={[labelsty]below:$v_1=v_2$}] (1) at (-0.3,0.2) {};
                    \node[vertex,label={[labelsty]below:$v_3$}] (3) at (0.6,0.2) {};
                    \draw[oedge] (1)to[in=160,out=200,loop](1);
                \end{scope}
                \draw[earrow] (1.5,0)--(2.5,0);
                \begin{scope}[xshift=4cm]
                    \draw[dashed] (0,0) circle [x radius=1.2cm,y radius=0.5cm];
                    \node[vertex,label={[labelsty]below:$v_1=v_2$}] (1) at (-0.3,0.2) {};
                    \node[vertex,label={[labelsty]below:$v_2$}] (3) at (0.5,0.2) {};
                    \node[vertex,label={[labelsty]above:$v_0$}] (0) at (0,1) {};
                    \draw[nedge] (1)to[bend right=15](0) (1)to[bend right=-15](0) (3)edge(0);
                \end{scope}
            \end{scope}
            \begin{scope}[xshift=8cm,yshift=-2.5cm]
                \begin{scope}
                    \draw[dashed] (0,0) circle [x radius=1.2cm,y radius=0.5cm];
                    \node[vertex,label={[labelsty]below:$v_1=v_2=v_3$}] (1) at (0,0.2) {};
                    \draw[oedge] (1)to[in=160,out=200,loop](1);
                \end{scope}
                \draw[earrow] (1.5,0)--(2.5,0);
                \begin{scope}[xshift=4cm]
                    \draw[dashed] (0,0) circle [x radius=1.2cm,y radius=0.5cm];
                    \node[vertex,label={[labelsty]below:$v_1=v_2=v_3$}] (1) at (0,0.2) {};
                    \node[vertex,label={[labelsty]above:$v_0$}] (0) at (0,1) {};
                    \draw[nedge] (1)edge(0) (1)to[bend right=30](0) (1)to[bend right=-30](0);
                \end{scope}
            \end{scope}
        \end{scope}
        
        \begin{scope}[yshift=-8.5cm,xshift=4cm]
            \begin{scope}
                \draw[dashed] (0,0) circle [x radius=1.2cm,y radius=0.5cm];
                \node[vertex,label={[labelsty]below:$v$}] (1) at (0,0) {};
            \end{scope}
            \draw[earrow] (1.5,0)--(2.5,0);
            \begin{scope}[xshift=4cm]
                \draw[dashed] (0,0) circle [x radius=1.2cm,y radius=0.5cm];
                \node[vertex,label={[labelsty]below:$v$}] (1) at (0,0) {};
                \node[vertex,label={[labelsty]left:$v_0$}] (0) at (0,1) {};
                \draw[nedge] (1)to(0);
                \draw[nedge] (0)to[in=60,out=120,loop](0);
            \end{scope}
        \end{scope}
    \end{tikzpicture}
    \caption{Extensions of multigraphs.}
    \label{fig:extensions1}
\end{figure}
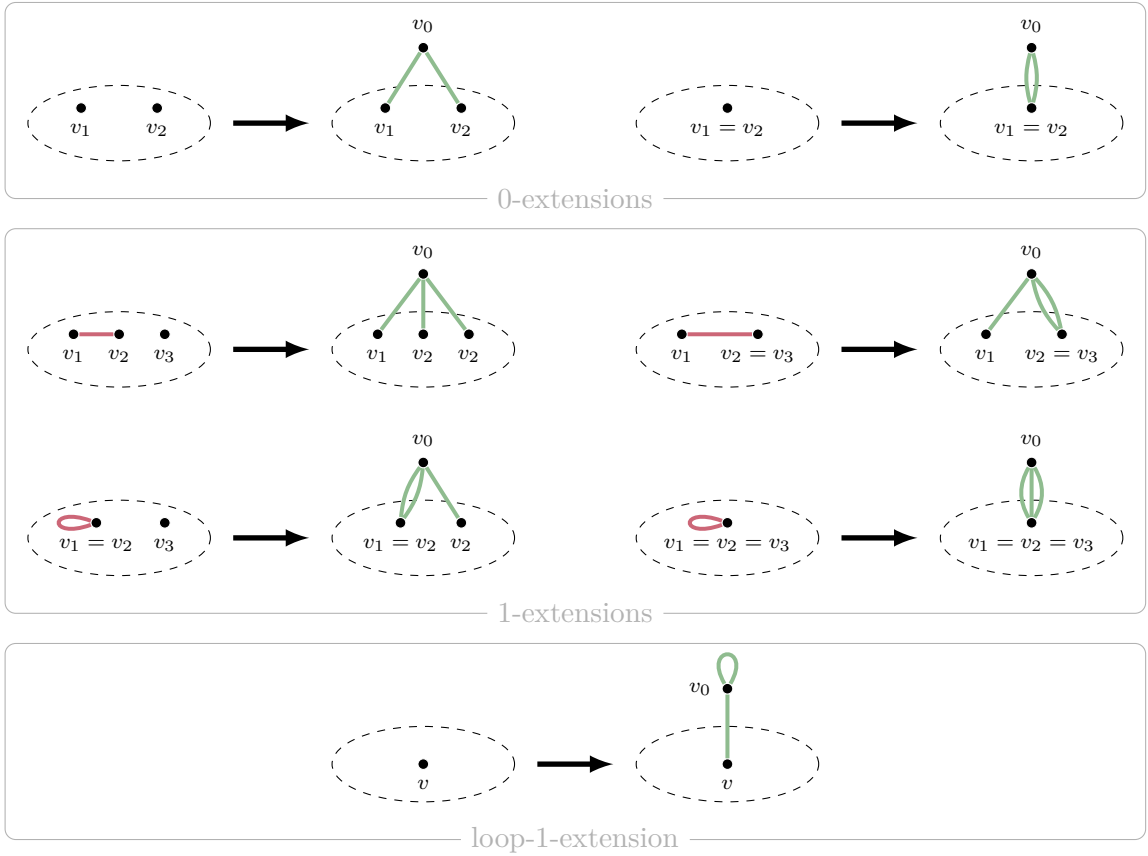
The inverse of a graph extension is known as a \emph{reduction},
e.g., if $G'$ is formed from a multigraph $G$ by a 0-/1-/loop-1-extension,
we say that $G$ is formed from $G'$ by a 0-/1-/loop-1-reduction.

We utilise the following special case of \cite[Theorem~1.6]{feketeszego06}. Define $K_1^j$ to be the multigraph comprised of a single vertex incident to $j$ loops.

\begin{theorem}\label{t:ext}
	A multigraph  $G$ is $(2,1)$-tight if and only if $G$ can be constructed from $K_1^1$ by a sequence of $0$-extensions, $1$-extensions and loop-$1$-extensions.
\end{theorem}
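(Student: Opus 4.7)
The plan is a Henneberg-style induction on $|V|$. For the forward direction, I would first check that $K_1^1$ satisfies $|E|=2|V|-1$ with the sparsity condition vacuous. Then each of the three extension operations adds one vertex and a net of two edges, so the equality $|E|=2|V|-1$ is preserved. For the sparsity inequality, I take any subset $X$ of the enlarged vertex set: if $X\not\ni v_0$, sparsity follows from sparsity of $G$; if $X\ni v_0$, the contribution to $i_{G'}(X)$ from edges incident to $v_0$ is at most two non-loop edges (for $0$- and $1$-extensions) or one non-loop edge plus one loop (for the loop-$1$-extension), so combining with $i_G(X\setminus\{v_0\})\leq 2|X\setminus\{v_0\}|-1$ gives the desired inequality. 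For the $1$-extension one also subtracts the deleted edge when both its endpoints lie in $X$.

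For the reverse direction, I induct on $|V|$. If $|V|=1$ then $|E|=1$ forces the single edge to be a loop, so $G=K_1^1$. For $|V|\geq 2$, the handshake lemma (with loops counted twice) gives $\sum_v\deg(v)=2|E|=4|V|-2$, so some vertex $v_0$ has $\deg(v_0)\leq 3$. Applying sparsity to $V\setminus\{v_0\}$ forces $\deg(v_0)\geq 2$ and excludes the possibility that $v_0$ is incident only to a single loop. The remaining subcases are: $\deg(v_0)=2$ via two non-loop edges (do a $0$-reduction); $\deg(v_0)=3$ via one loop and one non-loop edge (do a loop-$1$-reduction); and $\deg(v_0)=3$ via three non-loop edges to (possibly repeated) neighbours $v_1,v_2,v_3$ (do a $1$-reduction). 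In the first two cases, the reduced graph is a subgraph of $G$ with exactly $2(|V|-1)-1$ edges, hence $(2,1)$-tight, and induction finishes the argument.

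The main obstacle is the $1$-reduction. I need to exhibit an edge $e$ whose endpoints are drawn from the multiset $\{v_1,v_2,v_3\}$ (allowing $e$ to be a loop when two of the $v_i$ coincide) such that $G':=G-v_0+e$ is $(2,1)$-sparse. Suppose toward contradiction that no such $e$ works. Then for every admissible choice of endpoints $\{v_i,v_j\}$ there is a \emph{blocker}: a set $X_{ij}\subseteq V\setminus\{v_0\}$ with $\{v_i,v_j\}\subseteq X_{ij}$ and $i_G(X_{ij})=2|X_{ij}|-1$. I then use the submodular inequality $i_G(X)+i_G(Y)\leq i_G(X\cup Y)+i_G(X\cap Y)$ (verified edge-by-edge, valid for multigraphs) together with sparsity $i_G(Z)\leq 2|Z|-1$ when $|Z|\geq 2$: whenever two blockers intersect in at least two vertices, both their union and intersection are tight. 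Iterating, I merge the blockers into a single tight set $X^*\supseteq\{v_1,v_2,v_3\}$. But then $i_G(X^*\cup\{v_0\})\geq i_G(X^*)+3=2|X^*|+2=2|X^*\cup\{v_0\}|$, contradicting sparsity. The delicate point I expect to wrestle with is the case of singleton intersections of blockers (where two blockers meet in only one $v_i$, requiring me to exploit a potential loop at that vertex or to reselect the blocker set) and the subcases where some of the $v_i$ coincide, which force a careful separate treatment of the admissible pairs.
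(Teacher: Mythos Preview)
The paper does not give its own proof of this theorem: it is quoted as a special case of \cite[Theorem~1.6]{feketeszego06}, so there is nothing in the paper to compare your argument against directly. Your Henneberg-style induction is precisely the standard route taken in that reference, so in spirit you are reproducing the intended proof.

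One genuine issue, however, affects both the statement as written and your sketch. Under the paper's definition of $(2,1)$-sparsity (which only requires $i(X)\leq 2|X|-1$ for $|X|\geq 2$), the multigraph on $\{a,b\}$ with two loops at $a$ and one edge $ab$ is $(2,1)$-tight, yet it is not reachable from $K_1^1$ by the three listed operations: $b$ has degree $1$ and $a$ has degree $5$, so no reduction applies. Your step ``applying sparsity to $V\setminus\{v_0\}$ forces $\deg(v_0)\geq 2$'' fails exactly here, since $|V\setminus\{v_0\}|=1$ and the sparsity bound is unavailable. The fix (implicit in \cite{feketeszego06}) is to impose the sparsity inequality for all nonempty $X$, which forces at most one loop per vertex. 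You should note this hypothesis explicitly.

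Once that is in place, your outline goes through, and your ``delicate point'' about singleton blocker intersections is no longer delicate. If $X_{ij}\cap X_{ik}=\{v_i\}$, supermodularity of $i_G$ gives
\[
i_G(X_{ij}\cup X_{ik})\;\geq\;\bigl(2|X_{ij}|-1\bigr)+\bigl(2|X_{ik}|-1\bigr)-i_G(\{v_i\})\;=\;2|X_{ij}\cup X_{ik}|-i_G(\{v_i\}),
\]
so either $v_i$ has no loop and you contradict sparsity outright, or $v_i$ has exactly one loop and the union is again tight, allowing you to continue merging. The cases with coincident neighbours are then routine variants.
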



Let $\Gamma \leq O(2)$ and let $(G,\phi)$ be a $\Gamma$-gain graph.
Let $(G',\phi')$ be a $\Gamma$-gain graph where $G'$ is formed from $G$ by either a 0-extension, 1-extension or loop-1-extension.
We say that $(G',\phi')$ is formed from $(G,\phi)$ by a \emph{gained 0-extension} (respectively, \emph{gained 1-extension}, \emph{gained loop-1-extension}) if $\phi'(e,v,w) = \phi(e,v,w)$ for all $(e,v,w) \in \vec{E} \cap \vec{E}'$.
If $(G',\phi')$ is formed from $(G,\phi)$ by a gained 1-extension then we also require that $\phi'(e'_2,v_0,v_2)\phi(e'_1,v_1,v_0) = \phi(e,v_1,v_2)$ (with $e,e'_1,e'_2,v_0,v_1,v_2$ as given in the definition of a 1-extension).

\begin{lemma}\label{l:jordkasztani16}
	Let $(G,\phi)$ be a $\Gamma$-gain graph for a finite or infinite group $\Gamma \leq O(2)$.
	Let $(G',\phi')$ be a $\Gamma$-gain graph formed from $(G,\phi)$ by a gained 0-extension, gained 1-extension or gained loop-1-extension.
	If $(G,\phi)$ is $\Gamma$-symmetrically rigid in $\mathbb{R}^2$ then $(G',\phi')$ is $\Gamma$-symmetrically rigid in $\mathbb{R}^2$ also.
\end{lemma}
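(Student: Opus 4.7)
For each of the three extension types I would start with a regular, $\Gamma$-symmetrically rigid orbit placement $p$ of $(G,\phi)$, extend it to a placement $p'$ of $(G',\phi')$ by making a deliberate choice of $p(v_0) \in \mathbb{R}^2$, and show that the resulting orbit framework is $\Gamma$-symmetrically rigid. The template in each case is the same: given any $\Gamma$-symmetric infinitesimal flex $u'$ of $(G',\phi',p')$, I would argue first that its restriction $u := u'|_V$ is a $\Gamma$-symmetric flex of $(G,\phi,p)$ and hence trivial by hypothesis, and then, after subtracting off the unique trivial extension of $u$ from $V$ to $V'$, use the new edge equations incident to $v_0$ to force its $v_0$-component to vanish.

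The gained $0$-extension and gained loop-$1$-extension fit the template easily. For the $0$-extension I would place $v_0$ at a generic point of $\mathbb{R}^2$; the old rows of $R(G,\phi,p)$ appear verbatim in $R(G',\phi',p')$ with zeros in the $v_0$ columns, so step~(i) is immediate, and step~(ii) reduces to showing that the two vectors $p(v_0) - \gamma_i p(v_i)$ with $\gamma_i := \phi'(e'_i, v_0, v_i)$ are linearly independent, which is clearly generic. The loop-$1$-extension is analogous: placing $v_0$ generically and writing $\gamma = \phi(\ell,v_0,v_0) \neq 1$, step~(ii) produces the two constraints $[(2I - \gamma - \gamma^{-1}) p(v_0)] \cdot u'(v_0) = 0$ and $(p(v_0) - \gamma' p(v)) \cdot u'(v_0) = 0$; the first vector is a nonzero scalar multiple of either $p(v_0)$ (when $\gamma$ is a non-trivial rotation) or $(I-\gamma) p(v_0)$ (when $\gamma$ is a reflection), and is transverse to the second for generic placements.

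The main case is the gained $1$-extension, for which I would use a gain-theoretic adaptation of Whiteley's collinearity trick. Setting $\gamma_i := \phi'(e'_i, v_0, v_i)$ and $\gamma_e := \phi(e, v_1, v_2)$, so the compatibility condition reads $\gamma_2 \gamma_1^{-1} = \gamma_e$, I would choose
\[
p(v_0) = \lambda \gamma_1 p(v_1) + (1-\lambda) \gamma_2 p(v_2)
\]
for a generic $\lambda \in (0,1)$. Direct computation of the $v_0$-, $v_1$-, and $v_2$-entries of the three new rows then yields the identity $\lambda \, r'_1 + (1-\lambda) \, r'_2 = \lambda(1-\lambda) \, r_e$ as rows of the extended orbit rigidity matrix, so that the removed row $r_e$ lies in the span of $r'_1, r'_2$; this delivers step~(i) of the template since $u := u'|_V$ then automatically satisfies the $r_e$ equation and so is a symmetric flex of $(G,\phi,p)$. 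After reducing to $u \equiv 0$ on $V$, step~(ii) requires $u'(v_0)$ to be orthogonal to each $w_i := p(v_0) - \gamma_i p(v_i)$; although $w_1, w_2$ are parallel (both lying along $\gamma_2 p(v_2) - \gamma_1 p(v_1)$), the third vector $w_3$ is generically transverse to them and the triple spans $\mathbb{R}^2$, forcing $u'(v_0)=0$. The main obstacle is the algebraic identity above, which hinges on the simplification $\gamma_1^{-1} \gamma_2 = \gamma_2 \gamma_1^{-1} = \gamma_e$ coming from the commutativity of the cyclic group $\Gamma$ underlying Section~\ref{sec:plane-1iso}; without it, the $(v_1)$-entry of $r'_1$ would involve $\gamma_1^{-1} \gamma_2 \neq \gamma_e$ and the row dependence would break.
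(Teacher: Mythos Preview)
Your direct argument is essentially the orbit-rigidity-matrix manipulation that the paper alludes to when it cites \cite[Lemma 6.1]{jordkasztani16}; the paper itself gives no details and simply observes that the cited proof is insensitive to whether $\Gamma$ is finite or infinite. So in spirit you are reconstructing the proof the paper chose to outsource.

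There is, however, a genuine gap in your gained 1-extension case. You explicitly lean on $\gamma_1^{-1}\gamma_2 = \gamma_2\gamma_1^{-1} = \gamma_e$ to make the row identity $\lambda r'_1 + (1-\lambda) r'_2 = \lambda(1-\lambda) r_e$ go through, and you justify this by saying $\Gamma$ is cyclic because the lemma sits in Section~\ref{sec:plane-1iso}. But the lemma as stated applies to \emph{any} $\Gamma \leq O(2)$, and it is invoked verbatim (via \Cref{l:gainext}) in Section~\ref{sec:plane-noiso} for dihedral groups, which are non-abelian. For such $\Gamma$ your $v_1$- and $v_2$-entries produce $\gamma_1^{-1}\gamma_2$ and $\gamma_2^{-1}\gamma_1$ rather than $\gamma_e$ and $\gamma_e^{-1}$, and the row dependence fails exactly as you feared.

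The fix is short and standard: before choosing $p(v_0)$, apply a switching operation at $v_0$ so that $\gamma_1 = 1$. Switching does not affect $\Gamma$-symmetric rigidity, and with $\gamma_1 = 1$ the compatibility condition forces $\gamma_2 = \gamma_e$, whence $\gamma_1^{-1}\gamma_2 = \gamma_e$ and $\gamma_2^{-1}\gamma_1 = \gamma_e^{-1}$ hold trivially with no appeal to commutativity. Your collinear placement and the rest of the argument then go through unchanged. Once you insert this one sentence, your proof covers the full statement.
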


\begin{proof}
The lemma is proved in \cite[Lemma 6.1]{jordkasztani16} for finite groups. Their method is based on manipulating the orbit rigidity matrix which indeed applies independently of whether the group elements occurring in the matrix are elements of a finite group or an infinite group.
\end{proof}

It was shown in \cite{jordkasztani16} that for any group $\Gamma$ with $k(\Gamma)=1$, a $\Gamma$-gain graph is $(2,3,1)$-gain-tight if and only if it can be constructed from a vertex with a single loop by a sequence of gained 0-extensions, 1-extensions, and loop-1-extensions. Further, the analogues of \Cref{prop:necgain} and \Cref{l:jordkasztani16} in \cite{jordkasztani16} were only stated for finite point groups. Since our results apply to infinite point groups, we can very easily characterise exactly which $\Gamma$-symmetric gain graphs are $\Gamma$-rigid when $k(\Gamma)=1$. To be exact: given a finite or infinite point group $\Gamma \leq O(2)$ with $k(\Gamma)=1$, any $\Gamma$-gain graph is minimally $\Gamma$-rigid if and only if it is $(2,3,1)$-gain-tight.

\begin{lemma}\label{l:gainext}
	Let $(G,\phi)$ be a $\Gamma$-gain graph where $\Gamma \leq O(2)$ is a subgroup with $|\Gamma| \geq 3$,
	and let $G'=(V',E')$ be a multigraph formed from $G$ by either a $0$-extension, $1$-extension or loop-1-extension.
	If $(G,\phi)$ is $\Gamma$-symmetrically rigid in $\mathbb{R}^2$, then there exists a gain map $\phi' :\vec{E'} \rightarrow \Gamma$ where $\phi'(e,v,w)=\phi(e,v,w)$ for all $(e,v,w) \in \vec{E} \cap \vec{E}'$ and $(G',\phi')$ is $\Gamma$-symmetrically rigid in $\mathbb{R}^2$.
\end{lemma}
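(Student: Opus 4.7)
The plan is to invoke \Cref{l:jordkasztani16}, which already shows that a \emph{gained} $0$-extension, $1$-extension or loop-$1$-extension of a $\Gamma$-symmetrically rigid gain graph is again $\Gamma$-symmetrically rigid. Thus it suffices, for each extension type, to exhibit a gain map $\phi' : \vec{E'} \to \Gamma$ that (i) agrees with $\phi$ on $\vec{E} \cap \vec{E'}$, (ii) satisfies the three gain-graph axioms, and (iii) meets the compatibility identity turning $(G',\phi')$ into a gained extension of $(G,\phi)$ of the prescribed type.

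The $0$-extension and loop-$1$-extension cases will be essentially free. For a $0$-extension I assign arbitrary elements of $\Gamma$ to the two new directed edges $(e'_1,v_0,v_1)$ and $(e'_2,v_0,v_2)$, picking them distinct in the degenerate subcase $v_1 = v_2$; since $|\Gamma| \geq 2$ this is possible. For a loop-$1$-extension I pick any non-identity element for the new loop (required by axiom (iii)) and any element for the pendant edge.

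The substantive case is the $1$-extension. Setting $\gamma := \phi(e,v_1,v_2)$, $\alpha := \phi'(e'_1,v_1,v_0)$, $\beta := \phi'(e'_2,v_0,v_2)$ and $\delta := \phi'(e'_3,v_0,v_3)$, the gained-extension relation reads $\beta \alpha = \gamma$, so I would pick $\alpha \in \Gamma$ freely and set $\beta := \gamma \alpha^{-1}$, leaving $\delta$ a priori free. Axiom (ii) at $v_0$ then asks that the directed gains $\alpha^{-1},\,\gamma \alpha^{-1},\,\delta$ on $(e'_i,v_0,v_i)$ be distinct whenever the corresponding edges are parallel. In the coincidence $v_1 = v_2$ the removed edge $e$ is a loop, hence $\gamma \neq 1$ by axiom (iii), and so $\alpha^{-1} \neq \gamma \alpha^{-1}$ holds automatically. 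In each of the coincidences $v_1 = v_3$ or $v_2 = v_3$ only one value of $\delta$ is forbidden, and $|\Gamma| \geq 2$ delivers an admissible choice.

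The main obstacle, and the precise place where the hypothesis $|\Gamma| \geq 3$ bites, is the triple coincidence $v_1 = v_2 = v_3$: here $e'_1,e'_2,e'_3$ are three parallel edges at $v_0$, so $\alpha^{-1},\,\gamma \alpha^{-1},\,\delta$ must be pairwise distinct. The first two are distinct by the loop argument above, and $|\Gamma| \geq 3$ is exactly what guarantees a third admissible value for $\delta$; this hypothesis is tight, which is what motivates separating the two- and one-element cases into the complementary subsection. With $\phi'$ constructed in every case, a direct application of \Cref{l:jordkasztani16} concludes the argument.
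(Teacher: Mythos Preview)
Your proposal is correct and follows essentially the same approach as the paper's proof: both reduce the claim to \Cref{l:jordkasztani16} by exhibiting, for each extension type, a gain map $\phi'$ extending $\phi$ that satisfies the gain-graph axioms and the gained-extension compatibility, with the $1$-extension case analysis identifying $v_1=v_2=v_3$ as precisely where $|\Gamma|\geq 3$ is needed. The only cosmetic difference is that the paper fixes $\phi'(e'_1,v_0,v_1)=1$ concretely while you leave $\alpha$ a free parameter, but the resulting case analysis is the same.
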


\begin{proof}
	First suppose that $G'$ is formed from $G$ by a $0$-extension.
	Let $e_1,e_2$ be the added edges with ends $\{v_0,v_1\},\{v_0,v_2\}$ respectively,
	where $v_1,v_2\in V$.
	If $v_1 \neq v_2$ we define $\phi'$ to be the gain map with $\phi'(e,v,w)=\phi(e,v,w)$ for all $e \in \vec{E}$ and $\phi'(e_1,v_0,v_1) = \phi'(e_2,v_0,v_2) = 1$,
	while if $v_1 = v_2$ we define $\phi'$ to be the gain map with $\phi'(e,v,w)=\phi(e,v,w)$ for all $e \in \vec{E}$, $\phi'(e_1,v_0,v_1) = 1$ and $\phi'(e_2,v_0,v_2) = \gamma$ for some non-trivial $\gamma \in \Gamma$.
	By \Cref{l:jordkasztani16},
	$(G',\phi')$ is $\Gamma$-symmetrically rigid in $\mathbb{R}^2$.
	
	Next assume $G'$ is formed from $G$ by a $1$-extension.
	Let $e_1,e_2,e_3$ be the added edges with ends $\{v_0,v_1\}$, $\{v_0,v_2\}$, $\{v_0,v_3\}$ respectively;
	by relabelling if required we suppose the (possibly equal) vertices $v_1,v_2 \in V$ are the ends of the removed edge $e$,
	and if $v_3 \in \{v_1,v_2\}$ then $v_3 = v_1$.
	Choose $\gamma \in \Gamma$ such that $\gamma \neq 1$;
	if $v_1=v_2=v_3$ then we also require that $\gamma \neq \phi(e,v_1,v_2)$,
	which we can guarantee as $|\Gamma| \geq 3$.
	Define $\phi'$ to be the gain map with $\phi'(e,v,w)=\phi(e,v,w)$ for all $(e,v,w) \in \vec{E} \cap \vec{E}'$, $\phi'(e_1,v_0,v_1) = 1$,
	$\phi'(e_2,v_0,v_2) = \phi(e,v_1,v_2)$ and $\phi'(e_3,v_0,v_3) = \gamma$.
	By \Cref{l:jordkasztani16},
	$(G',\phi')$ is $\Gamma$-symmetrically rigid in $\mathbb{R}^2$.

	Finally suppose $G'$ is formed from $G$ by a loop-$1$-extension.
	Let $\ell$ be the added loop at $v_0$ and $e_1$ the added edge with ends $\{v_0,v_1\}$.
	Choose any non-trivial $\gamma \in \Gamma$.
	Define $\phi'$ to be the gain map with $\phi'(e,v,w)=\phi(e,v,w)$ for all $(e,v,w) \in \vec{E}$, $\phi'(e_1,v_0,v_1) = 1$ and $\phi'(\ell,v_0,v_0) = \gamma$.
	By \Cref{l:jordkasztani16},
	$(G',\phi')$ is $\Gamma$-symmetrically rigid in $\mathbb{R}^2$.
\end{proof}

\begin{theorem}\label{t:d2k1}
	Let $\Gamma$ be a rotational subgroup of $O(2)$ with $|\Gamma| \geq 3$,
	and let $G$ be a $(2,1)$-tight multigraph.
	Then there exists a gain map $\phi : \vec{E} \rightarrow \Gamma$ such that $(G, \phi)$ is $\Gamma$-symmetrically rigid in $\mathbb{R}^2$.
\end{theorem}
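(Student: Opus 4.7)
The plan is to induct on $|V(G)|$ using Theorem~\ref{t:ext}, which builds every $(2,1)$-tight multigraph from $K_1^1$ through a sequence of $0$-, $1$- and loop-$1$-extensions.

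For the base case $G = K_1^1$, I will pick any non-identity $\gamma \in \Gamma$ (available since $|\Gamma| \geq 3$) and assign it to the single loop $\ell$. Taking the orbit placement to map the vertex $v_0$ to a nonzero point, the orbit rigidity matrix is a single row of the form $(2 - 2\cos\theta)\,p(v_0)$, where $\theta \in (0, 2\pi)$ is the rotation angle of $\gamma$. This row is nonzero, so the matrix has rank~$1$ and nullity~$1$. Since $\Gamma$ is rotational, Lemma~\ref{l:d2k}(ii)--(iii) gives $k(\Gamma) = 1$, and a brief direct check confirms that the space of $\Gamma$-symmetric trivial infinitesimal flexes of this one-vertex framework is exactly $1$-dimensional (translations fail to be $\Gamma$-symmetric for any nontrivial rotational $\Gamma$, and rotations about the origin contribute a $1$-dimensional family). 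Hence every $\Gamma$-symmetric flex is trivial, so $(K_1^1, \phi)$ is $\Gamma$-symmetrically rigid.

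For the inductive step, given a $(2,1)$-tight $G$, Theorem~\ref{t:ext} exhibits $G$ as a $0$-, $1$- or loop-$1$-extension of a $(2,1)$-tight multigraph $G_0$ on fewer vertices. The inductive hypothesis supplies a gain map $\phi_0$ making $(G_0, \phi_0)$ $\Gamma$-symmetrically rigid, and Lemma~\ref{l:gainext} then produces a gain map $\phi$ on $G$ extending $\phi_0$ such that $(G, \phi)$ is $\Gamma$-symmetrically rigid. The hypothesis $|\Gamma| \geq 3$ is precisely what Lemma~\ref{l:gainext} needs to accommodate the worst case — a $1$-extension with $v_1 = v_2 = v_3$ — where one must choose $\gamma \in \Gamma$ avoiding both the identity and the existing gain on the removed edge.

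Since the combinatorial and analytic heavy lifting is packaged in Theorem~\ref{t:ext} and Lemma~\ref{l:gainext}, there is no real obstacle to overcome; the only genuinely new ingredient is the base case, which reduces to a one-line matrix computation.
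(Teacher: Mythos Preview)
Your proof is correct and follows essentially the same route as the paper: use Theorem~\ref{t:ext} to reduce to $K_1^1$, verify the base case directly, and then apply Lemma~\ref{l:gainext} inductively along the extension sequence. The only difference is that you spell out the orbit rigidity matrix computation for the base case, which the paper dismisses as ``immediate.''
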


\begin{proof}
 	By \Cref{t:ext},
	$G$ can be formed from $K_1^1$ by a sequence of $0$-extensions, $1$-extensions and loop-1-extensions.
	Choose any non-identity element $\gamma \in \Gamma$ and define $\phi': \vec{E(K_1^1)} \rightarrow \Gamma$ to be the gain map that maps the single loop of $K_1^1$ to $\gamma$.
	It is immediate that $(K_1^1,\phi')$ is $\Gamma$-symmetrically rigid in $\mathbb{R}^2$.
	By applying \Cref{l:gainext} inductively we see that there exists a gain map $\phi: \vec{E} \rightarrow \Gamma$ so that $(G,\phi)$ is $\Gamma$-symmetrically rigid in $\mathbb{R}^2$.	
\end{proof}

\subsection{Groups with 2 elements}

Unfortunately \Cref{t:d2k1} does not hold when $|\Gamma|=2$.
For example, take the multigraph with two vertices and three non-loop edges. 
Any gain map would require two edges to have identical gains, so no gain map is possible.
To avoid this problem, we next refine \Cref{t:ext}, using the same operations, to the more restrictive class of $(2,1)$-tight multigraphs with no triple of parallel edges\footnote{This is a smaller class, so certainly every multigraph in it can be constructed, as guaranteed by \Cref{t:ext}, however for our inductive rigidity application we must refine the construction so that no intermediate multigraph in the construction contains a triple of parallel edges.}. This result is then deployed to show that triples of parallel edges are the only block stopping \Cref{t:d2k1} from holding when $|\Gamma|=2$.


\begin{lemma}\label{l:extnotriple}
	Let $G=(V,E)$ be a multigraph with no triple of parallel edges.
	Then $G$ is $(2,1)$-tight if and only if $G$ can be constructed from $K_1^1$ by a sequence of $0$-extensions, $1$-extensions and loop-$1$-extensions that do not form a triple of parallel edges.
\end{lemma}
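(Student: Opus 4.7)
The ``if'' direction is immediate from \Cref{t:ext}: each allowed extension preserves $(2,1)$-tightness, and by hypothesis no extension in our sequence creates a triple of parallel edges, so starting from $K_1^1$ we remain in the desired class at every step.

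For the converse I would proceed by induction on $|V|$. The base case $|V|=1$ forces $|E|=1$, so $G=K_1^1$. For the inductive step, the degree-sum identity $2|E|=4|V|-2$ produces a vertex $v_0$ of degree at most $3$, and if $|V|\geq 2$ then $v_0$ cannot consist of just an isolated loop (otherwise $V\setminus\{v_0\}$ would induce $2|V|-2>2|V|-3$ edges, violating sparsity). Two of the three sub-cases are immediate because they introduce no new edges: if $\deg(v_0)=2$ with two (possibly parallel) non-loop edges, a $0$-reduction suffices; and if $\deg(v_0)=3$ with a loop at $v_0$ plus a single non-loop edge, a loop-$1$-reduction works. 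In both cases a direct count shows the reduced multigraph is $(2,1)$-tight, and since no new edges appear, the no-triple property is preserved.

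The substantive case is $\deg(v_0)=3$ with three non-loop edges to (possibly coinciding) neighbours $v_1,v_2,v_3$, where a $1$-reduction must add a fresh edge between some pair $\{v_i,v_j\}$ (or a loop, if $v_i=v_j$). The chosen pair must satisfy both (a) $(2,1)$-tightness is preserved and (b) no triple of parallel edges is created. For (a), I would invoke the standard submodularity argument for the $(2,1)$-sparsity function $2|X|-1$: among the at most three candidate pairs, at least one $1$-reduction is admissible, the potential obstructions being $(2,1)$-tight subgraphs of $G-v_0$ that contain the chosen pair. For (b), the no-triple hypothesis forces each pair of vertices to carry at most two edges; moreover the three pairs $\{v_1,v_2\},\{v_1,v_3\},\{v_2,v_3\}$ cannot all be doubled simultaneously, since (when the $v_i$ are distinct) the subgraph on $\{v_0,v_1,v_2,v_3\}$ would then contain at least $3+6=9$ edges, violating $2\cdot 4-1=7$, and analogous local sparsity counts rule out the remaining degenerate configurations in which some of the $v_i$ coincide.

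The main obstacle is to show that the admissible pairs from (a) and the non-doubled pairs from (b) intersect. I would handle this through a case analysis on how many of $v_1,v_2,v_3$ coincide and on which pairs in $G$ are already doubled; in each configuration, either one exhibits a pair that is simultaneously admissible and triple-free, or else the blocking $(2,1)$-tight subgraphs for the non-doubled pairs, combined with the doubled pairs and with the edges incident to $v_0$, force a subgraph of $G$ exceeding the sparsity allowance $2|X|-1$, contradicting the $(2,1)$-tightness of $G$ itself.
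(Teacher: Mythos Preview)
Your outline is correct and follows essentially the same route as the paper: degree argument, trivial handling of $0$- and loop-$1$-reductions, and for the degree-$3$ non-loop case the interplay between blocking $(2,1)$-tight subgraphs and doubled neighbour-pairs. The paper executes your promised case analysis by splitting on $|N(v_0)|\in\{2,3\}$; the key step you allude to but do not write out is that once some neighbour-pair (say $x,y$) is doubled, any $(2,1)$-tight blocker $H$ for the $xz$-reduction must omit $y$ and $v_0$, and then adjoining $y,v_0$ together with the two $xy$-edges and the three edges at $v_0$ adds $5$ edges on $2$ new vertices, contradicting $(2,1)$-sparsity of $G$.
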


\begin{proof}
One direction is trivial. For the converse suppose $G=(V,E)$ is a $(2,1)$-tight multigraph, distinct from $K_1^1$, with no triple of parallel edges. Since $|E|=2|V|-1$, $G$ has a vertex of degree at most 3. If there exists a degree 2 vertex the multigraph can be reduced to a smaller one with the required properties by a $0$-reduction. So we may suppose $v\in V$ has degree 3. If $v$ is incident to a loop then a loop-1-reduction is always possible. So $v$ is not incident to a loop. It follows from the proof of \Cref{t:ext} (see \cite{feketeszego06}) that there is a 1-reduction at $v$ to a smaller $(2,1)$-tight multigraph. Suppose that every such reduction at $v$ creates a triple of parallel edges. Note that two of the possible 1-reductions add only a new loop so cannot create a triple of parallel edges.

If $N(v)=\{x,y,z\}$ then without loss we may assume that $G$ contains two parallel edges between $x$ and $y$. We now consider the 1-reduction at $v$ that adds $xz$. This fails to result in a $(2,1)$-tight multigraph if and only if there is a $(2,1)$-tight subgraph $H$ of $G$ such that $x,z\in V(H)$ and $y,v\notin V(H)$. (To see that $y\notin V(H)$, simply note that if it was then $H+v$ would violate $(2,1)$-sparsity.)
However, if such a subgraph $H$ exists then we may add $y$, its two parallel edges to $x$, $v$ and its three edges to obtain a subgraph of $G$ that is not $(2,1)$-sparse.
Hence, the 1-reduction at $v$ adding $xz$ results in a $(2,1)$-tight multigraph, so there must exist two parallel edges between $x$ and $z$. By a similar argument, there must also exist two parallel edges between $y$ and $z$. Thus the subgraph of $G$ induced by $v$ and its neighbours has 9 edges and 4 vertices, contradicting $(2,1)$-sparsity.

Therefore, $v$ has two distinct neighbours $x,y$ with a double edge from $v$ to $x$.
We now know two things from our assumptions:
(i) the vertex $x$ lies in a $(2,1)$-tight subgraph $H$ of $G$ (as otherwise we could perform the 1-reduction at $v$ that adds a loop at $x$ which does not form a triple of parallel edges), and
(ii) there exists a pair of parallel edges between $x,y$ (since the only remaining possible 1-reduction at $v$ to a smaller $(2,1)$-tight multigraph adds an edge between $x,y$, and this must form a triple of parallel edges).
However, the subgraph of $G$ formed from adding the vertices $y,v$ and the edges between the vertices $x,y,v$ to $H$ is not $(2,1)$-tight,
contradicting our original assumption. 
This completes the proof.
\end{proof}

\begin{lemma}\label{l:gainextnotriple}
	Let $(G,\phi)$ be a $\Gamma$-gain graph where $\Gamma$ is a cyclic group,
	and let $G'=(V',E')$ be a multigraph formed from $G$ by either a $0$-extension, $1$-extension or loop-$1$-extension.
	Suppose neither $G$ or $G'$ contain a triple of parallel edges.
	If $(G,\phi)$ is $\Gamma$-symmetrically rigid in $\mathbb{R}^2$ then there exists a gain map $\phi' :\vec{E'} \rightarrow \Gamma$ where $\phi'(e,v,w)=\phi(e,v,w)$ for all $(e,v,w) \in \vec{E} \cap \vec{E}'$ and $(G',\phi')$ is $\Gamma$-symmetrically rigid in $\mathbb{R}^2$.
\end{lemma}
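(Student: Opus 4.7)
The plan is to mimic the strategy of \Cref{l:gainext}, constructing an explicit gain map $\phi'$ in each extension case that (i) extends $\phi$ on old edges, (ii) satisfies the gain map axioms on $G'$, and (iii) turns the combinatorial extension into a gained $0$-/$1$-/loop-$1$-extension in the sense of \Cref{l:jordkasztani16}. An application of \Cref{l:jordkasztani16} then transfers $\Gamma$-symmetric rigidity from $(G,\phi)$ to $(G',\phi')$. The only place where the proof of \Cref{l:gainext} genuinely required $|\Gamma|\ge 3$ was a single subcase of the gained $1$-extension that needs three pairwise distinct gains on three mutually parallel new edges; the no-triple hypothesis here rules exactly that subcase out, opening the door to cyclic groups of order $2$.

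For a $0$-extension adding edges $e_1,e_2$ from $v_0$ to $v_1,v_2$, I will set $\phi'(e_1)=\phi'(e_2)=1$ when $v_1\neq v_2$, and $\phi'(e_1)=1$, $\phi'(e_2)=\gamma$ for any non-trivial $\gamma\in\Gamma$ when $v_1=v_2$. For a loop-$1$-extension I will set $\phi'(\ell)=\gamma$ for any non-trivial $\gamma$ and $\phi'(e_1)=1$. Both prescriptions copy \Cref{l:gainext} verbatim, are visibly compatible with the gain-map axioms and the definition of a gained extension, and are unaffected by the no-triple assumption, since in these two operations the new vertex $v_0$ is incident to at most two mutually parallel new edges.

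The substantive case is the gained $1$-extension. Using the convention of \Cref{l:gainext} that $v_3=v_1$ whenever $v_3\in\{v_1,v_2\}$, I split into the four subcases: (a) $v_1,v_2,v_3$ pairwise distinct; (b) $v_1=v_2\neq v_3$, so $e$ is a loop in $G$; (c) $v_1=v_3\neq v_2$; (d) $v_1=v_2=v_3$. Subcase (d) would place three mutually parallel edges between $v_0$ and $v_1$ in $G'$ and is therefore excluded by hypothesis. In each remaining subcase I set $\phi'(e_1,v_0,v_1)=1$ and $\phi'(e_2,v_0,v_2)=\phi(e,v_1,v_2)$ so as to satisfy the gained $1$-extension consistency condition; in (b) the parallel edges $e_1,e_2$ carry the distinct gains $1$ and $\phi(e)\neq 1$ automatically, since $e$ is a loop and hence $\phi(e)$ is non-trivial, and in (c) I pick $\phi'(e_3,v_0,v_3)$ to be any non-trivial element of $\Gamma$ (possible since $|\Gamma|\ge 2$), so that it differs from $\phi'(e_1)=1$ on the parallel edges $e_1,e_3$. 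In (a) the three new edges have pairwise distinct endpoints, so $\phi'(e_3)$ may be chosen freely.

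The hard part, and the whole motivation for the no-triple assumption, is ensuring that no gain-map clash forces three distinct group elements at $v_0$; once subcase (d) is excluded, every constraint can be met with a single non-trivial element of $\Gamma$. This is precisely why \Cref{l:extnotriple} (which keeps the no-triple condition intact throughout a $0$-/$1$-/loop-$1$-reduction sequence) is the natural combinatorial partner of this lemma, paving the way for the cyclic-group analogue of \Cref{t:d2k1}.
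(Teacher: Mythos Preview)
Your proof is correct and follows the same approach as the paper. The paper's proof is a one-line observation that the construction in \Cref{l:gainext} only invokes the hypothesis $|\Gamma|\ge 3$ in the single subcase $v_1=v_2=v_3$ of the $1$-extension (which produces a triple of parallel edges at $v_0$), and you have simply unpacked that observation into an explicit case analysis.
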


\begin{proof}
    This follows from \Cref{l:gainext} by noting that the only extension moves that require $|\Gamma| \geq 3$ are those that create a triple of parallel edges.
\end{proof}

\begin{theorem}
	Let $\Gamma$ be a cyclic subgroup of $O(2)$ with $|\Gamma| =2$,
	and let $G$ be a $(2,1)$-tight multigraph with no triple of parallel edges.
	Then there exists a gain map $\phi : \vec{E} \rightarrow \Gamma$ such that $(G, \phi)$ is $\Gamma$-symmetrically rigid in $\mathbb{R}^2$.
\end{theorem}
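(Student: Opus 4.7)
The plan is to mirror the proof of \Cref{t:d2k1} but replace the unconditional construction theorem \Cref{t:ext} with the refined one from \Cref{l:extnotriple}, and replace the gained extension lemma \Cref{l:gainext} with its $|\Gamma|=2$-safe counterpart \Cref{l:gainextnotriple}. The two-element cyclic case fails \Cref{t:d2k1} only because a $1$-extension on a doubled edge can force a triple of parallel edges, and we have already done the combinatorial work to avoid precisely that scenario; once the construction avoids creating triples, the obstruction vanishes.

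First I would take the base framework to be $K_1^1$ equipped with the gain map sending its single loop to the non-identity element $\gamma$ of $\Gamma$; this is trivially $\Gamma$-symmetrically rigid in $\mathbb{R}^2$ (the two covering vertices with a bar between them form a rigid framework on the line through the centre of symmetry). Then I invoke \Cref{l:extnotriple} to obtain a sequence
\[
K_1^1 = G_0, G_1, \ldots, G_n = G
\]
where each $G_{i+1}$ is obtained from $G_i$ by a $0$-extension, $1$-extension or loop-$1$-extension, and no $G_i$ contains a triple of parallel edges.

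Proceeding by induction on $i$, suppose we have constructed a gain map $\phi_i$ on $G_i$ for which $(G_i,\phi_i)$ is $\Gamma$-symmetrically rigid in $\mathbb{R}^2$. Since neither $G_i$ nor $G_{i+1}$ contains a triple of parallel edges, \Cref{l:gainextnotriple} supplies an extension $\phi_{i+1}$ of $\phi_i$ such that $(G_{i+1},\phi_{i+1})$ is also $\Gamma$-symmetrically rigid. Taking $\phi := \phi_n$ yields the required gain map on $G$.

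There is essentially no obstacle beyond correctly invoking the two previous lemmas; the real content has already been absorbed into the combinatorial step \Cref{l:extnotriple} (which guarantees no intermediate multigraph has a triple of parallel edges) and into \Cref{l:gainextnotriple} (which handles the cyclic $|\Gamma|=2$ case of gain assignment). The only point that deserves a line of verification is the base case: with $\Gamma = \{1,\gamma\}$ and the loop labelled by $\gamma$, the orbit rigidity matrix of $K_1^1$ has a single row proportional to $p(v)$, hence is of full rank for any $p(v) \neq 0$, and the one-dimensional kernel corresponds to the unique trivial $\Gamma$-symmetric infinitesimal flex.
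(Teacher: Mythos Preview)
Your proof is correct and follows essentially the same approach as the paper: invoke \Cref{l:extnotriple} for the triple-free construction sequence from $K_1^1$, verify the base case, and inductively apply \Cref{l:gainextnotriple}. One minor quibble: in the base-case verification the loop row of the orbit rigidity matrix is $2(I-\gamma)p(v)$, which is proportional to $p(v)$ only when $\gamma=-I$ (not when $\gamma$ is a reflection), but your conclusion that the row is nonzero for generic $p(v)$, and hence the kernel is one-dimensional, remains valid in both cases.
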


\begin{proof}
	By \Cref{l:extnotriple},
	$G$ can be formed from $K_1^1$ by a sequence of $0$-extensions, $1$-extensions and loop-1-extensions that do not form a triple of parallel edges and edge joining.
	Choose the non-identity element $\gamma \in \Gamma$ and define $\phi': \vec{E(K_1^1)} \rightarrow \Gamma$ to be the gain map that maps the single loop of $K_1^1$ to $\gamma$.
	It is immediate that $(K_1^1,\phi')$ is $\Gamma$-symmetrically rigid in $\mathbb{R}^2$.
	By applying \Cref{l:gainextnotriple} inductively we see that there exists a gain map $\phi: \vec{E} \rightarrow \Gamma$ so that $(G,\phi)$ is $\Gamma$-symmetrically rigid in $\mathbb{R}^2$.
\end{proof}

\section{Dihedral plane symmetry groups}\label{sec:plane-noiso}

We now extend the results of the previous section to apply when the symmetry group, whether finite or infinite, contains both rotations and reflections.

\subsection{Groups with at least 6 elements}

We have defined $0$-extensions, $1$-extensions and loop-$1$-extensions earlier.
In the following we need additional extension constructions.
Let $G = (V,E)$ and $G'=(V',E')$ be multigraphs with $V' = V + v_0$.
See \Cref{fig:extensions2} for visualisations.
\begin{itemize}
    \item We say $G'$ is formed from $G$ by a \emph{$2$-extension} if $E' = E -\{e_1 , e_2\} + \{e'_1,e'_2,e'_3,e'_4\}$,
    where the edges $e_1,e_2 \in E$ have endpoints $v_1,v_2$ and $v_3,v_4$ respectively,
    and each edge $e'_i$ has endpoints $v_0,v_i$.
    Note that the $v_1,v_2,v_3,v_4$ do not need to be different.
    \item We say $G'$ is formed from $G$ by a \emph{loop-2-extension} if $E' = E - e + \{e'_1,e'_2, \ell\}$,
    where $e \in E$ is an edge with endpoints $v_1,v_2$, each edge $e'_i$ has endpoints $v_0,v_i$ and $\ell$ is a loop at $v_0$. Note that $v_1$ and $v_2$ do not need to be distinct.
    \item We say $G'$ is formed from $G$ by a \emph{loop-0-extension} if $E' = E \{\ell_1,\ell_2\}$,
    where $\ell_1,\ell_2$ are loops at $v_0$.
\end{itemize}

\begin{figure}[ht]
    \centering
    \begin{tikzpicture}[every loop/.style={min distance=6mm,looseness=5},eframe/.style={black!30!white,rounded corners},earrow/.style={line width=2pt,-latex}]
        \node[eframe] (e2) at (6,-8.5) {2-extensions};
        \draw[eframe] (e2) -- ++(7.5,0) -- ++ (0,10.1) -- ++(-15,0) -- ++(0,-10.1) -- (e2);
        \node[eframe] (l2) at (6,-11.5) {loop-2-extension};
        \draw[eframe] (l2) -- ++(7.5,0) -- ++ (0,2.6) -- ++(-15,0) -- ++(0,-2.6) -- (l2);
        
        \begin{scope}
            \begin{scope}
                \begin{scope}
                    \draw[dashed] (0,0) circle [x radius=1.2cm,y radius=0.5cm];
                    \node[vertex,label={[labelsty]below:$v_1$}] (1) at (-0.6,0.2) {};
                    \node[vertex,label={[labelsty]below:$v_2$}] (2) at (-0.2,0.2) {};
                    \node[vertex,label={[labelsty]below:$v_3$}] (3) at (0.2,0.2) {};
                    \node[vertex,label={[labelsty]below:$v_4$}] (4) at (0.6,0.2) {};
                    \draw[oedge] (1)edge(2);
                    \draw[oedge] (3)edge(4);
                \end{scope}
                \draw[earrow] (1.5,0)--(2.5,0);
                \begin{scope}[xshift=4cm]
                    \draw[dashed] (0,0) circle [x radius=1.2cm,y radius=0.5cm];
                    \node[vertex,label={[labelsty]below:$v_1$}] (1) at (-0.6,0.2) {};
                    \node[vertex,label={[labelsty]below:$v_2$}] (2) at (-0.2,0.2) {};
                    \node[vertex,label={[labelsty]below:$v_3$}] (3) at (0.2,0.2) {};
                    \node[vertex,label={[labelsty]below:$v_4$}] (4) at (0.6,0.2) {};
                    \node[vertex,label={[labelsty]above:$v_0$}] (0) at (0,1) {};
                    \draw[nedge] (1)edge(0) (2)edge(0) (3)edge(0) (4)edge(0);
                \end{scope}
            \end{scope}
            \begin{scope}[xshift=8cm]
                \begin{scope}
                    \draw[dashed] (0,0) circle [x radius=1.2cm,y radius=0.5cm];
                    \node[vertex,label={[labelsty,label distance=-3pt]-135:$v_1$}] (1) at (-0.6,0.2) {};
                    \node[vertex,label={[labelsty]below:$v_2=v_3$}] (2) at (0,0) {};
                    \node[vertex,label={[labelsty,label distance=-3pt]-45:$v_4$}] (4) at (0.6,0.2) {};
                    \draw[oedge] (1)edge(2);
                    \draw[oedge] (2)edge(4);
                \end{scope}
                \draw[earrow] (1.5,0)--(2.5,0);
                \begin{scope}[xshift=4cm]
                    \draw[dashed] (0,0) circle [x radius=1.2cm,y radius=0.5cm];
                    \node[vertex,label={[labelsty,label distance=-3pt]-135:$v_1$}] (1) at (-0.6,0.2) {};
                    \node[vertex,label={[labelsty]below:$v_2=v_3$}] (2) at (0,0) {};
                    \node[vertex,label={[labelsty,label distance=-3pt]-45:$v_4$}] (4) at (0.6,0.2) {};
                    \node[vertex,label={[labelsty]above:$v_0$}] (0) at (0,1) {};
                    \draw[nedge] (1)edge(0) (2)to[bend right=15](0) (2)to[bend right=-15](0) (4)edge(0);
                \end{scope}
            \end{scope}
            
            \begin{scope}[yshift=-2.5cm,xshift=4cm]
                \begin{scope}
                    \draw[dashed] (0,0) circle [x radius=1.2cm,y radius=0.5cm];
                    \node[vertex,label={[labelsty]below:$v_1=v_3$}] (1) at (-0.6,0.2) {};
                    \node[vertex,label={[labelsty]below:$v_2=v_4$}] (4) at (0.6,0.2) {};
                    \draw[oedge] (1)to[bend right=15](4);
                    \draw[oedge] (1)to[bend right=-15](4);
                \end{scope}
                \draw[earrow] (1.5,0)--(2.5,0);
                \begin{scope}[xshift=4cm]
                    \draw[dashed] (0,0) circle [x radius=1.2cm,y radius=0.5cm];
                    \node[vertex,label={[labelsty]below:$v_1=v_3$}] (1) at (-0.6,0.2) {};
                    \node[vertex,label={[labelsty]below:$v_2=v_4$}] (4) at (0.6,0.2) {};
                    \node[vertex,label={[labelsty]above:$v_0$}] (0) at (0,1) {};
                    \draw[nedge] (1)to[bend right=15](0) (1)to[bend right=-15](0) (4)to[bend right=15](0) (4)to[bend right=-15](0);
                \end{scope}
            \end{scope}
        \end{scope}
        \begin{scope}[yshift=-5cm]
            \begin{scope}
                \begin{scope}
                    \draw[dashed] (0,0) circle [x radius=1.2cm,y radius=0.5cm];
                    \node[vertex,label={[labelsty]below:$v_1=v_2$}] (1) at (-0.3,0) {};
                    \node[vertex,label={[labelsty]below:$v_3$}] (3) at (0.2,0.2) {};
                    \node[vertex,label={[labelsty]below:$v_4$}] (4) at (0.6,0.2) {};
                    \draw[oedge] (1)to[in=160,out=200,loop](1);
                    \draw[oedge] (3)edge(4);
                \end{scope}
                \draw[earrow] (1.5,0)--(2.5,0);
                \begin{scope}[xshift=4cm]
                    \draw[dashed] (0,0) circle [x radius=1.2cm,y radius=0.5cm];
                    \node[vertex,label={[labelsty]below:$v_1=v_2$}] (1) at (-0.3,0) {};
                    \node[vertex,label={[labelsty]below:$v_3$}] (3) at (0.2,0.2) {};
                    \node[vertex,label={[labelsty]below:$v_4$}] (4) at (0.6,0.2) {};
                    \node[vertex,label={[labelsty]above:$v_0$}] (0) at (0,1) {};
                    \draw[nedge] (1)to[bend right=15](0) (1)to[bend right=-15](0) (3)edge(0) (4)edge(0);
                \end{scope}
            \end{scope}
            \begin{scope}[xshift=8cm]
                \begin{scope}
                    \draw[dashed] (0,0) circle [x radius=1.2cm,y radius=0.5cm];
                    \node[vertex,label={[labelsty]below:$v_1=v_2=v_3$}] (1) at (-0.1,0.1) {};
                    \node[vertex,label={[labelsty]right:$v_4$}] (4) at (0.5,0.1) {};
                    \draw[oedge] (1)to[in=160,out=200,loop](1);
                    \draw[oedge] (1)edge(4);
                \end{scope}
                \draw[earrow] (1.5,0)--(2.5,0);
                \begin{scope}[xshift=4cm]
                    \draw[dashed] (0,0) circle [x radius=1.2cm,y radius=0.5cm];
                    \node[vertex,label={[labelsty]below:$v_1=v_2=v_3$}] (1) at (-0.1,0.1) {};
                    \node[vertex,label={[labelsty]right:$v_4$}] (4) at (0.5,0.1) {};
                    \node[vertex,label={[labelsty]above:$v_0$}] (0) at (0,1) {};
                    \draw[nedge] (1)to[bend right=20](0) (1)to[bend right=-20](0) (1)edge(0) (4)edge(0);
                \end{scope}
            \end{scope}
        \end{scope}
        \begin{scope}[yshift=-7.5cm]
            \begin{scope}
                \begin{scope}
                    \draw[dashed] (0,0) circle [x radius=1.2cm,y radius=0.5cm];
                    \node[vertex,label={[labelsty]below:$v_1=v_2$}] (1) at (-0.3,0) {};
                    \node[vertex,label={[labelsty]below:$v_3=v_4$}] (3) at (0.35,0.2) {};
                    \draw[oedge] (1)to[in=160,out=200,loop](1);
                    \draw[oedge] (3)to[in=20,out=-20,loop](3);
                \end{scope}
                \draw[earrow] (1.5,0)--(2.5,0);
                \begin{scope}[xshift=4cm]
                    \draw[dashed] (0,0) circle [x radius=1.2cm,y radius=0.5cm];
                    \node[vertex,label={[labelsty]below:$v_1=v_2$}] (1) at (-0.3,0) {};
                    \node[vertex,label={[labelsty]below:$v_3=v_4$}] (3) at (0.35,0.2) {};
                    \node[vertex,label={[labelsty]above:$v_0$}] (0) at (0,1) {};
                    \draw[nedge] (1)to[bend right=15](0) (1)to[bend right=-15](0) (3)to[bend right=15](0) (3)to[bend right=-15](0);
                \end{scope}
            \end{scope}
            \begin{scope}[xshift=8cm]
                \begin{scope}
                    \draw[dashed] (0,0) circle [x radius=1.2cm,y radius=0.5cm];
                    \node[vertex,label={[labelsty,align=center]below:$v_1=v_2=$\\[-1ex]$v_3=v_4$}] (1) at (0,0.25) {};
                    \draw[oedge] (1)to[in=160,out=200,loop](1);
                    \draw[oedge] (1)to[in=20,out=-20,loop](1);
                \end{scope}
                \draw[earrow] (1.5,0)--(2.5,0);
                \begin{scope}[xshift=4cm]
                    \draw[dashed] (0,0) circle [x radius=1.2cm,y radius=0.5cm];
                    \node[vertex,label={[labelsty,align=center]below:$v_1=v_2=$\\[-1ex]$v_3=v_4$}] (1) at (0,0.25) {};
                    \node[vertex,label={[labelsty]above:$v_0$}] (0) at (0,1) {};
                    \draw[nedge] (1)to[bend right=40](0) (1)to[bend right=-40](0) (1)to[bend right=15](0) (1)to[bend right=-15](0);
                \end{scope}
            \end{scope}
        \end{scope}
        
        \begin{scope}[yshift=-10.5cm]
            \begin{scope}
                \begin{scope}
                    \draw[dashed] (0,0) circle [x radius=1.2cm,y radius=0.5cm];
                    \node[vertex,label={[labelsty]below:$v_1$}] (1) at (-0.5,0.2) {};
                    \node[vertex,label={[labelsty]below:$v_2$}] (2) at (0.5,0.2) {};
                    \draw[oedge] (1)edge(2);
                \end{scope}
                \draw[earrow] (1.5,0)--(2.5,0);
                \begin{scope}[xshift=4cm]
                    \draw[dashed] (0,0) circle [x radius=1.2cm,y radius=0.5cm];
                    \node[vertex,label={[labelsty]below:$v_1$}] (1) at (-0.5,0.2) {};
                    \node[vertex,label={[labelsty]below:$v_2$}] (2) at (0.5,0.2) {};
                    \node[vertex,label={[labelsty]left:$v_0$}] (0) at (0,1) {};
                    \draw[nedge] (1)to(0) (2)edge(0);
                    \draw[nedge] (0)to[in=60,out=120,loop](0);
                \end{scope}
            \end{scope}
            \begin{scope}[xshift=8cm]
                \begin{scope}
                    \draw[dashed] (0,0) circle [x radius=1.2cm,y radius=0.5cm];
                    \node[vertex,label={[labelsty]below:$v_1=v_2$}] (1) at (0,0.2) {};
                    \draw[oedge] (1)to[in=160,out=200,loop](1);
                \end{scope}
                \draw[earrow] (1.5,0)--(2.5,0);
                \begin{scope}[xshift=4cm]
                    \draw[dashed] (0,0) circle [x radius=1.2cm,y radius=0.5cm];
                    \node[vertex,label={[labelsty]below:$v_1=v_2$}] (1) at (0,0.2) {};
                    \node[vertex,label={[labelsty]left:$v_0$}] (0) at (0,1) {};
                    \draw[nedge] (1)to[bend right=20](0) (1)to[bend right=-20](0);
                    \draw[nedge] (0)to[in=60,out=120,loop](0);
                \end{scope}
            \end{scope}
        \end{scope}
    \end{tikzpicture}
    \caption{Extensions of multigraphs.}
    \label{fig:extensions2}
\end{figure}
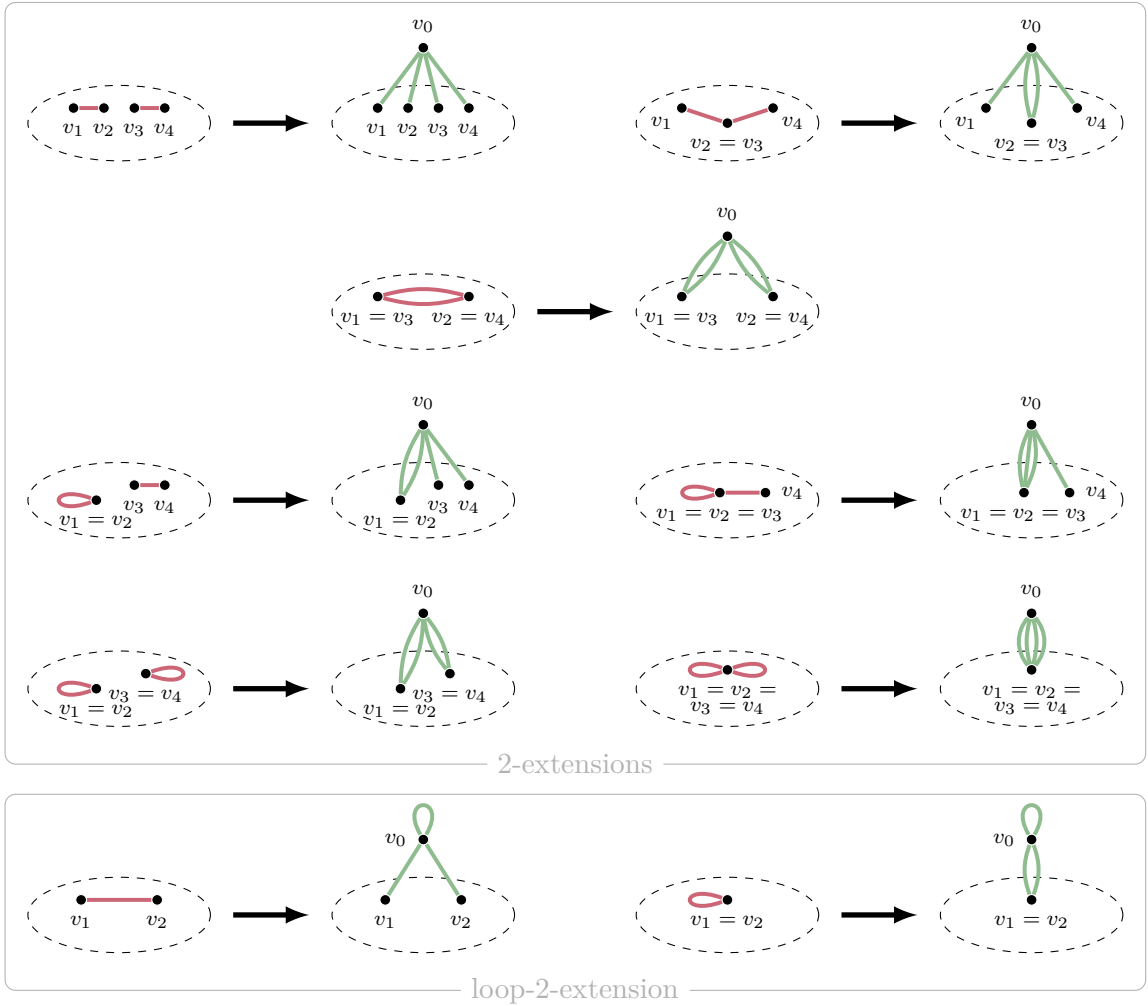

The following is another special case of \cite[Theorem~1.6]{feketeszego06}.
Since $(2,0)$-tight multigraphs can be disconnected we must add a loop-0-extension where the new vertex is incident to two loops (and is not connected to the original multigraph). Clearly this was unnecessary in the $(2,1)$-tight case.

\begin{theorem}\label{t:ext2}
	Let $G$ be a multigraph.
	Then $G$ is $(2,0)$-tight if and only if $G$ can be constructed from $K_1^2$ by a sequence of $0$-extensions, loop-0-extensions, $1$-extensions, loop-$1$-extensions, 2-extensions and loop-2-extensions.
\end{theorem}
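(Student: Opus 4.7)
The plan is to deduce this from \cite[Theorem~1.6]{feketeszego06} in the same way \Cref{t:ext} was derived. For the ``if'' direction I would check that each of the six operations preserves $(2,0)$-tightness: every listed extension adds one vertex and a net of two edges, so the equality $|E|=2|V|$ is maintained, and a standard submodularity argument applied to the function $X \mapsto 2|X|-i_G(X)$ confirms that no new subset can violate the sparsity bound $i_G(X)\leq 2|X|$.

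For the converse I would induct on $|V|$. The base case $|V|=1$ gives $K_1^2$ immediately, since $(2,0)$-tightness forces exactly two loops at the unique vertex. For $G\neq K_1^2$ with $|V|\geq 2$, the identity $2|E|=4|V|$ together with the fact that sparsity forces every vertex to have degree at least two produces a vertex $v$ with $2\leq \deg(v)\leq 4$, which can then be reduced. The case split goes by $\deg(v)$ and by the number of loops incident to $v$: degree two forces two non-loop edges (giving a $0$-reduction, as the only-loop case would force $G=K_1^1$, contradicting tightness when combined with $|V|\geq 2$); degree three yields either a $1$-reduction (three non-loop edges) or a loop-$1$-reduction (one loop plus one non-loop edge); and degree four yields a $2$-reduction (four non-loop edges), a loop-$2$-reduction (one loop plus two non-loop edges), or a loop-$0$-reduction (two loops with no other incidences, in which case $v$ sits in its own component).

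The main obstacle is the $1$-, $2$-, loop-$1$- and loop-$2$-reductions, where one must select a suitable pairing of (a subset of) the neighbours of $v$ and add the corresponding edges so that the resulting multigraph remains $(2,0)$-tight. The heart of \cite[Theorem~1.6]{feketeszego06}, and the crux of any direct proof, is to show that among all candidate pairings at least one is admissible. The argument proceeds by contradiction: if every admissible pairing is blocked by a tight obstructing subgraph, the submodular inequality for the function $X \mapsto 2|X| - i(X)$ forces two such obstructions to combine into a subgraph of $G$ that already violates $(2,0)$-sparsity, which is impossible. This mirrors the reasoning carried out explicitly in the proof of \Cref{l:extnotriple}. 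Once a valid reduction is produced, the induction hypothesis applies to the smaller $(2,0)$-tight multigraph, completing the proof.
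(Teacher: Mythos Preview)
Your proposal is correct and matches the paper's approach: the paper states \Cref{t:ext2} as a special case of \cite[Theorem~1.6]{feketeszego06} without giving any proof, just as it did for \Cref{t:ext}. Your additional sketch of the direct inductive argument goes beyond what the paper provides and is essentially sound, though note two small imprecisions: it is tightness (not sparsity alone) that forces minimum degree at least two, and in the degree-two loop case the contradiction comes from $V\setminus\{v\}$ violating sparsity rather than from $G$ equalling $K_1^1$.
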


Let $\Gamma \leq O(2)$ and let $(G,\phi)$ be a $\Gamma$-gain graph.
Let $(G',\phi')$ be a $\Gamma$-gain graph where $G'$ be formed from $G$ by either a 2-extension or a loop-2-extension.
We say that $(G',\phi')$ is formed from $(G,\phi)$ by a \emph{gained 2-extension} (respectively, \emph{gained loop-2-extension}, \emph{gained loop-0-extension}) if $\phi'(e,v,w) = \phi(e,v,w)$ for all $(e,v,w) \in \vec{E} \cap \vec{E}'$,
plus the following extra requirement:
\begin{itemize}
    \item If $(G',\phi')$ is formed from $(G,\phi)$ by a gained 2-extension then we additionally require that $\phi'(e'_2,v_0,v_2)\phi(e'_1,v_1,v_0) = \phi(e_1,v_1,v_2)$ and $\phi'(e'_4,v_0,v_4)\phi(e'_3,v_3,v_0) = \phi(e_2,v_3,v_4)$ (with $e_i$'s, $e'_i$'s, $v_i$'s as given in the definition of a 2-extension).
    \item If $(G',\phi')$ is formed from $(G,\phi)$ by a gained loop-2-extension then we also require that $\phi'(e'_2,v_0,v_2)\phi(e'_1,v_1,v_0) = \phi(e,v_1,v_2)$ (with $e,e'_1,e'_2,v_0,v_1,v_2$ as given in the definition of a loop-2-extension).
    \item If $(G',\phi')$ is formed from $(G,\phi)$ by a gained loop-0-extension then we also require that the generated subgroup $\langle \phi'(\ell_1,v_0,v_0),\phi(\ell_2,v_0,v_0) \rangle$ is dihedral.
\end{itemize}

Similarly to \Cref{l:jordkasztani16} we have the following extension lemma for both finite and infinite groups which was proved in the finite case in \cite[Lemmas 8.5 \& 8.7]{jordkasztani16}.

\begin{lemma}\label{l:jordkasztani16.2}
	Let $(G,\phi)$ be a $\Gamma$-gain graph for a finite or infinite group $\Gamma \leq O(2)$ that contains both rotations and reflections.
	Let $(G',\phi')$ be a $\Gamma$-gain graph formed from $(G,\phi)$ by a gained 2-extension or a gained loop-2-extension.
	If $(G,\phi)$ is $\Gamma$-symmetrically rigid in $\mathbb{R}^2$ then $(G',\phi')$ is $\Gamma$-symmetrically rigid in $\mathbb{R}^2$ also.
\end{lemma}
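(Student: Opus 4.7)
The plan is to follow the same template used for \Cref{l:jordkasztani16}. The analogous statement for finite groups is established in \cite[Lemmas 8.5 \& 8.7]{jordkasztani16} by explicit row manipulations on the orbit rigidity matrix, and those manipulations use only that the entries of $R(G,\phi,p)$ are built from the coordinates of $p$ together with the linear isometries $\gamma_\ell \in O(2)$ attached to the gains. Nothing in the computation relies on $|\Gamma|$ being finite, so the same argument runs verbatim here.

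Concretely, I would start from a regular orbit placement $p$ of $(G,\phi)$ witnessing $\Gamma$-symmetric rigidity, extend to a placement $p'$ of $(G',\phi')$ by putting $v_0$ at a generic point $q \in \mathbb{R}^2$, and take an arbitrary $\Gamma$-symmetric infinitesimal flex $u'$ of the enlarged orbit framework. In the gained $2$-extension case, the compatibility identity $\phi'(e'_2,v_0,v_2)\phi'(e'_1,v_1,v_0)=\phi(e_1,v_1,v_2)$ is precisely what is needed to form a linear combination of the two rows of $R(G',\phi',p')$ indexed by $e'_1$ and $e'_2$ that cancels the $u'(v_0)$ contribution and reproduces the row of $R(G,\phi,p)$ associated to the deleted edge $e_1$. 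The same manipulation applied to the pair $e'_3, e'_4$ recovers the row for $e_2$. Therefore the restriction $u := u'|_V$ lies in the kernel of $R(G,\phi,p)$ and is a $\Gamma$-symmetric flex of $(G,\phi,p)$, which is trivial by hypothesis; for a generic choice of $q$, the two new rows from $e'_3, e'_4$ (together with the two recovered ones) are then enough to force $u'(v_0)$ to agree with the extension of that trivial motion, so $u'$ itself is trivial.

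The gained loop-$2$-extension case is treated in parallel: the pair $e'_1,e'_2$ again combines, via the analogous compatibility identity, to recover the row of the deleted edge $e$, and the row of the added loop $\ell$ together with genericity of $q$ pins down $u'(v_0)$. The only genuine piece of work is verifying that the row combination truly eliminates the $u'(v_0)$ terms and reproduces the deleted row; this is the content of the case analysis in the finite-group proof, and it depends only on $\gamma_\ell \in O(2)$ and on the bilinear structure of the entries of the orbit rigidity matrix, so no new argument is required for infinite $\Gamma \leq O(2)$. I expect this bookkeeping of linear parts of group elements under the row cancellation to be the main (purely technical) obstacle, since one must handle the possibilities $v_1=v_2$, $v_3=v_4$, and coincidences among the four vertices in turn, exactly as in the finite case.
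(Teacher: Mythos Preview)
Your proposal is correct and follows essentially the same approach as the paper: both observe that the finite-group argument from \cite[Lemmas 8.5 \& 8.7]{jordkasztani16} proceeds by row manipulations on the orbit rigidity matrix that are insensitive to whether $\Gamma$ is finite or infinite. The paper in fact gives no more detail than this one-sentence justification, whereas you have helpfully spelled out how the compatibility identities drive the row cancellations.
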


With this we can obtain the following result.

\begin{lemma}\label{l:gain2ext}
	Let $(G,\phi)$ be a $\Gamma$-gain graph where $\Gamma \leq O(2)$ is a subgroup with $|\Gamma| \geq 6$  that contains both rotations and reflections,
	and let $G'=(V',E')$ be a multigraph formed from $G$ by either a $2$-extension or a loop-$2$-extension.
	If $(G,\phi)$ is $\Gamma$-symmetrically rigid in $\mathbb{R}^2$ then there exists a gain map $\phi' :\vec{E'} \rightarrow \Gamma$, where $\phi'(e,v,w)=\phi(e,v,w)$ for all $(e,v,w) \in \vec{E} \cap \vec{E}'$ and $(G',\phi')$ is $\Gamma$-symmetrically rigid in $\mathbb{R}^2$.
\end{lemma}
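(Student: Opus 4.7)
The plan is to mirror the proof of \Cref{l:gainext}: define $\phi'$ by extending $\phi$ on the new edges so that it satisfies the compatibility conditions required of a gained $2$-extension (respectively, gained loop-$2$-extension), then invoke \Cref{l:jordkasztani16.2} to transfer $\Gamma$-symmetric rigidity from $(G,\phi)$ to $(G',\phi')$.

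For a $2$-extension, let $e_1,e_2$ be the removed edges (with ends $\{v_1,v_2\}$ and $\{v_3,v_4\}$) and $e'_1,\ldots,e'_4$ the new edges joining $v_0$ to $v_1,\ldots,v_4$. The defining equations
\[
\phi'(e'_2,v_0,v_2)\phi'(e'_1,v_1,v_0) = \phi(e_1,v_1,v_2),\qquad
\phi'(e'_4,v_0,v_4)\phi'(e'_3,v_3,v_0) = \phi(e_2,v_3,v_4),
\]
uniquely determine $\phi'(e'_2)$ and $\phi'(e'_4)$ once the two free parameters $a:=\phi'(e'_1,v_1,v_0)$ and $b:=\phi'(e'_3,v_3,v_0)$ in $\Gamma$ are chosen. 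I would set $\phi'$ to agree with $\phi$ on $\vec{E}\cap\vec{E}'$; the only possible obstruction to $\phi'$ being a valid gain map is that distinct parallel new edges might receive equal gains (no loops at $v_0$ arise since $v_0$ is new). The loop-$2$-extension case is analogous, with a single free parameter $a$ for the pair $e'_1,e'_2$ together with an independent choice $\phi'(\ell,v_0,v_0)=\gamma$ for any non-identity $\gamma\in\Gamma$.

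The main obstacle is therefore a case analysis over possible coincidences among $v_1,v_2,v_3,v_4$: each coincidence either converts a removed edge into a loop (forcing the helpful condition $\phi(e_i)\neq 1$) or creates a pair of parallel new edges whose gains must be distinguished. The worst situation is $v_1=v_2=v_3=v_4$, where the four parallel new edges receive gains $a^{-1},\phi(e_1)a^{-1},b^{-1},\phi(e_2)b^{-1}$: the pairs within $\{e'_1,e'_2\}$ and within $\{e'_3,e'_4\}$ are automatically distinct by loop nontriviality, while the four remaining cross-pairs translate into the requirement that $b$ avoid the set $\{a,\phi(e_2)a,\phi(e_1)^{-1}a,\phi(e_2)\phi(e_1)^{-1}a\}$. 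All other coincidence patterns impose strictly fewer constraints, and the loop-$2$-extension is easier still. Thus the hypothesis $|\Gamma|\geq 6$ comfortably produces a valid choice of $(a,b)$, and applying \Cref{l:jordkasztani16.2} to the resulting gained extension $(G',\phi')$ completes the proof.
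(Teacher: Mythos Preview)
Your proposal is correct and follows essentially the same approach as the paper: choose the new gains to satisfy the compatibility conditions for a gained extension, do a case analysis on coincidences among the $v_i$ to ensure parallel new edges receive distinct gains, and invoke \Cref{l:jordkasztani16.2}. The paper spells out five explicit cases whereas you argue that the all-coincident case dominates with at most four constraints on the second free parameter; one cosmetic slip is that your forbidden values for $b$ have the factors on the wrong side for non-abelian $\Gamma$ (e.g.\ $a^{-1}=\phi(e_2)b^{-1}$ gives $b=a\phi(e_2)$, not $\phi(e_2)a$), but the cardinality bound---and hence the argument---is unaffected.
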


\begin{proof}
	First suppose $G'$ is formed by a 2-extension.
	Let $\gamma_1 = \phi(e_1,v_1,v_2)$ and $\gamma_2 = \phi(e_2,v_3,v_4)$.
	We now need to show that we can choose pairwise-disjoint $\mu_1,\mu_2,\mu_3,\mu_4 \in \Gamma$ so that if we set $\phi'$ to be the gain map of $G'$ where $\phi'(e'_i,v_0,v_i)=\mu_i$ for each edge $e'_i$ and $\phi'(e,v,w)=\phi(e,v,w)$ otherwise, and where $\mu_2^{-1}\mu_1 = \gamma_1$ and $\mu_4^{-1}\mu_3 = \gamma_2$, then $\phi'$ is a gain map of $G'$;
	i.e.~if $e'_i$ and $e'_j$ are parallel in $G'$,
	then $\mu_i \neq \mu_j$.
	By switching $v_1,v_2$ and/or switching $v_3,v_4$ if required,
	we see that there are 5 possibilities to deal with.
	\begin{enumerate}
	\item $e_1,e_2$ share no endpoints:
	In this case neither of the edges $e'_1$ and $e'_2$ is parallel to either $e'_3$ or $e'_4$.
	Set $\mu_1 = \gamma_1$, $\mu_3 = \gamma_2$ and $\mu_2=\mu_4$.
	If $\gamma_1=1$ then $e'_1$ and $e'_2$ are parallel if and only if $e_1$ is a loop,
	which contradicts that no loop may have trivial gain.
	Hence $\phi'$ is a gain map of $G'$.
	
	\item $v_1 \neq v_2$, $v_1 \neq v_4$ and $v_2 = v_3$:
	Choose $\lambda \in \Gamma$ so that $\lambda \gamma_1 \neq 1$.
	We now set $\mu_1=\gamma_1$, $\mu_2=1$, $\mu_3 = \lambda \gamma_2$ and $\mu_4 = \lambda$ to obtain the desired gain map $\phi'$ of $G'$.
	
	\item $v_1 = v_3$, $v_2 = v_4$ and $v_1 \neq v_2$:
	By applying switching operations, we may assume that $\gamma_1=1$.
	Since $(G,\phi)$ is a gain graph, it follows that $\gamma_2 \neq 1$.
	As $|\Gamma| \geq 3$ and every element of a group has exactly one inverse,
	we can choose an element $\lambda \in \Gamma$ so that $\lambda \neq 1$ and $\lambda \gamma_2 \neq 1$.
	We now set $\mu_1=\mu_2=1$, $\mu_3 = \lambda \gamma_2$ and $\mu_4 = \lambda$ to obtain the desired gain map $\phi'$ of $G'$.
	
	\item $v_1 = v_2 = v_3$ and $v_1 \neq v_4$:
	As $e_1$ is a loop,
	we have $\gamma_1 \neq 1$.
	As $|\Gamma| \geq 3$,
	we can choose an element $\lambda \in \Gamma$ so that $\lambda \gamma_2 \notin \{1,\gamma_1\}$.
	We now set $\mu_1=\gamma_1$, $\mu_2=1$, $\mu_3 = \lambda \gamma_2$ and $\mu_4 = \lambda$ to obtain the desired gain map $\phi'$ of $G'$.
	
	\item $v_1 = v_2 = v_3 = v_4$:
	As $e_1,e_2$ are both loops at the same vertex, we have $\gamma_1,\gamma_2 \neq 1$ and $\gamma_1 \neq \gamma_2$.
	Since $|\Gamma| \geq 5$,
	there exists $\lambda \in \Gamma \setminus \{1,\gamma_1\}$ so that $\lambda \gamma_2 \notin \{ 1,\gamma_1\}$.
	We note that $\lambda \neq \lambda \gamma_2$ since $\gamma_2 \neq 1$.
	We now set $\mu_1=\gamma_1$, $\mu_2=1$, $\mu_3 = \lambda \gamma_2$ and $\mu_4 = \lambda$ to obtain the desired gain map $\phi'$ of $G'$.
	\end{enumerate}
	
	Now suppose $G'$ is formed by a loop-2-extension.
	Let $\gamma = \phi(e,v_1,v_2)$.
	We now need to show that we can choose pairwise-disjoint $\mu_1,\mu_2,\mu \in \Gamma$ so that if we set $\phi'$ to be the gain map of $G'$, where $\phi'(e'_i,v_0,v_i)=\mu_i$ for each edge $e'_i$, $\phi'(\ell,v_0,v_0) = \mu$ and $\phi'(e,v,w)=\phi(e,v,w)$ otherwise, and where $\mu_2^{-1}\mu_1 = \gamma$ and $\mu \neq 1$, then $\phi'$ is a gain map of $G'$;
	i.e.~if $e'_1$ and $e'_2$ are parallel in $G'$,
	then $\mu_1 \neq \mu_2$.
	If $e'_1,e'_2$ are not parallel,
	any $\mu_1,\mu_2,\mu \in \Gamma$ where $\mu_2^{-1}\mu_1 = \gamma$ and $\mu \neq 1$ suffice.
	Suppose that $e'_1,e'_2$ are parallel.
	Then $e$ is a loop and $\gamma \neq 1$.
	We now set $\mu_1=\gamma_1$, $\mu_2=1$ and choose any $\mu \in \Gamma \setminus \{1\}$ to obtain the desired gain map $\phi'$ of $G'$.
\end{proof}

\begin{theorem}\label{t:d2k0}
	Let $\Gamma$ be a subgroup of $O(2)$ with $k(\Gamma)=0$ and $|\Gamma| \geq 6$,
	and let $G$ be a $(2,0)$-tight multigraph.
	Then there exists a gain map $\phi : \vec{E} \rightarrow \Gamma$ such that $(G, \phi)$ is $\Gamma$-symmetrically rigid in $\mathbb{R}^2$.
\end{theorem}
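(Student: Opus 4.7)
The plan is to mirror the inductive argument of \Cref{t:d2k1}, now based on \Cref{t:ext2}: every $(2,0)$-tight multigraph $G$ can be built from $K_1^2$ by a sequence of $0$-, loop-$0$-, $1$-, loop-$1$-, $2$- and loop-$2$-extensions, so I would choose gains step by step along this construction so as to preserve $\Gamma$-symmetric rigidity throughout. Since $k(\Gamma)=0$ and $|\Gamma|\geq 6$, \Cref{l:d2k} supplies both a non-trivial rotation $\alpha\in\Gamma$ and a reflection $\beta\in\Gamma$, and the subgroup $\langle\alpha,\beta\rangle$ is dihedral. This pair $(\alpha,\beta)$ will be used as the default gains whenever a construction step introduces two new loops at a fresh vertex.

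For the base case $K_1^2$, I would assign the two loops the gains $\alpha$ and $\beta$. At a generic orbit placement $p(v_1)$ the two rows of the $2\times 2$ orbit rigidity matrix are $2(1-\cos\theta)\,p(v_1)$, where $\theta$ is the rotation angle of $\alpha$, and $2(p(v_1)-\beta p(v_1))$, which is a nonzero multiple of the normal to the mirror line of $\beta$. Genericity makes these two row vectors linearly independent, so the matrix has rank $2=2|V|$ and nullity $0=k(\Gamma)$, hence $(K_1^2,\phi)$ is $\Gamma$-symmetrically rigid.

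For the inductive step, $0$-, $1$- and loop-$1$-extensions are absorbed by \Cref{l:gainext} (since $|\Gamma|\geq 6\geq 3$), and $2$- and loop-$2$-extensions by \Cref{l:gain2ext} (since $|\Gamma|\geq 6$ and $\Gamma$ contains both rotations and reflections by \Cref{l:d2k}). The remaining case is the loop-$0$-extension, which creates a new isolated vertex $v_0$ carrying two new loops; here I would again set the two new loop gains to $\alpha$ and $\beta$. Because $\langle\alpha,\beta\rangle$ is dihedral, this is a genuine gained loop-$0$-extension. Since $v_0$ is disconnected from the rest of $G$, the resulting orbit rigidity matrix is block-diagonal, with one block the inductively rigid matrix of $(G,\phi)$ (nullity $0$) and one block a copy of the $K_1^2$ matrix just analysed (also nullity $0$), so the extended gain graph has nullity $0=k(\Gamma)$ and is $\Gamma$-symmetrically rigid.

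The main obstacle is the loop-$0$-extension, which sits outside the scope of both \Cref{l:gainext} and \Cref{l:gain2ext} and, a priori, looks dangerous because it introduces a disjoint orbit of vertices that could contribute extra $\Gamma$-symmetric flexes. The key simplification provided by the hypothesis $k(\Gamma)=0$ is that there are no non-trivial $\Gamma$-symmetric infinitesimal isometries at all, so rigidity is simply additive across the disjoint components of the orbit framework and the base-case computation for $K_1^2$ is enough to close the induction.
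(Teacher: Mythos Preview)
Your proposal is correct and follows essentially the same approach as the paper: invoke \Cref{l:d2k} to see that $\Gamma$ contains both rotations and reflections, use \Cref{t:ext2} for the construction sequence, verify the base case $K_1^2$ with dihedrally-generating loop gains, and handle the inductive step via \Cref{l:gainext} and \Cref{l:gain2ext} together with the block-diagonal observation for loop-$0$-extensions. Your treatment is in fact slightly more explicit than the paper's, which simply asserts that $(K_1^2,\phi')$ is $\Gamma$-symmetrically rigid and that loop-$0$-extensions therefore preserve rigidity.
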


\begin{proof}
	By \Cref{l:d2k},
	$\Gamma$ contains both rotations and reflections.
	By \Cref{t:ext2},
	$G$ can be formed from $K_1^2$ by a sequence of $0$-extensions, loop-0-extensions, $1$-extensions, loop-1-extensions, $2$-extensions and loop-2-extensions.
	Choose any distinct non-identity elements $\gamma_1,\gamma_2 \in \Gamma$ so that the group $\langle \gamma_1,\gamma_2 \rangle$ is dihedral.
	Define $\phi': \vec{E(K_1^2)} \rightarrow \Gamma$ so that the loops of $K_1^2$ have gains $\gamma_1$ and $\gamma_2$.
	It is immediate that $(K_1^2,\phi')$ is $\Gamma$-symmetrically rigid in $\mathbb{R}^2$.
	Moreover, we note that this implies loop-0-extensions preserve $\Gamma$-symmetric rigidity.
	By applying \Cref{l:gainext,l:gain2ext} inductively we see that there exists a gain map $\phi: \vec{E} \rightarrow \Gamma$ so that $(G,\phi)$ is $\Gamma$-symmetrically rigid in $\mathbb{R}^2$.	
\end{proof}

\subsection{Groups with 4 elements}

\Cref{t:d2k0} unfortunately does not hold when $|\Gamma|=4$.
For example, take the multigraph with two vertices and four non-loop edges.
The only possible gain map produces the cover graph shown in \Cref{fig:flexK44}.
However, the created gain graph is $\Gamma$-symmetrically flexible;
see \cite{Dixon,Bottema,Wunderlich} for more details.
We now show that quadruples of parallel edges are the only block to extending \Cref{t:d2k0}.
We first begin with a result of a similar nature to \Cref{l:extnotriple}.
\begin{figure}[ht]
    \centering
    \begin{tikzpicture}[gridl/.style={dotted,black!50!white}]
		\node[vertex] (1) at (1,1.5) {};
		\node[vertex] (2) at (-1,-1.5) {};
		\node[vertex] (3) at (1,-1.5) {};
		\node[vertex] (4) at (2,0.5) {};
		\node[vertex] (5) at (2,-0.5) {};
		\node[vertex] (6) at (-2,-0.5) {};
		\node[vertex] (7) at (-1,1.5) {};
		\node[vertex] (8) at (-2,0.5) {};

		\draw[gridl] (2)edge(3) (7)edge(1) (6)edge(5) (8)edge(4);
		\draw[gridl] (6)edge(8) (5)edge(4) (2)edge(7) (3)edge(1); 
		\draw[edge] (5)edge(3) (2)edge(6);
		\draw[edge] (2)edge(4)  (5)edge(7);
		\draw[edge] (2)edge(5) (7)edge(4) (3)edge(6);
		\draw[edge] (7)edge(6) (3)edge(4);
		\draw[edge] (5)edge(1) (1)edge(6) (1)edge(4);
		\draw[edge] (1)edge(8) (2)edge(8) (3)edge(8) (7)edge(8);
    \end{tikzpicture}
    \caption{Flexible symmetric framework of $K_{4,4}$.}
    \label{fig:flexK44}
\end{figure}
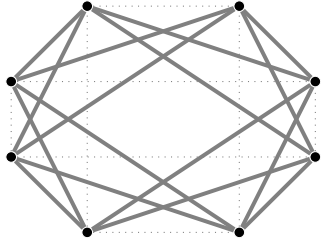

\begin{lemma}\label{l:extnofour}
	Let $G=(V,E)$ be a multigraph with no quadruple of parallel edges.
	Then $G$ is $(2,0)$-tight if and only if $G$ can be constructed from $K_1^{2}$ by a sequence of $0$-extensions, loop-0-extensions, $1$-extensions, loop-$1$-extensions, 2-extensions and loop-2-extensions that do not form a quadruple of parallel edges.
\end{lemma}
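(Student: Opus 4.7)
The ``if'' direction is routine: every allowed extension preserves $(2,0)$-tightness by \Cref{t:ext2}, and by hypothesis no extension in the sequence produces a quadruple of parallel edges, so $G$ inherits both properties from $K_1^2$.

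For the ``only if'' direction I would induct on $|V|$, with base case $G = K_1^2$. The inductive step reduces to finding a reduction of $G$ that produces a smaller $(2,0)$-tight multigraph still free of quadruple parallel edges. The identity $|E| = 2|V|$ supplies a vertex $v$ of degree at most $4$, and I would split into cases on $\deg(v)$ and on the number of loops incident to $v$. The easy branches are those in which no new edge is introduced: degree $2$ with two ordinary edges admits a $0$-reduction, degree $3$ incident to a loop admits a loop-$1$-reduction, and degree $4$ incident to two loops (so $v$ is isolated with two loops) admits a loop-$0$-reduction. In each of these, preservation of $(2,0)$-tightness is standard and no quadruple can be created because no edge is added.

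The delicate branches are those in which the reduction adds at least one new edge: a $1$-reduction at a degree-$3$ vertex without a loop, a loop-$2$-reduction at a degree-$4$ vertex carrying one loop, and a $2$-reduction at a degree-$4$ vertex with four non-loop incidences. Here I would adapt the strategy used in the proof of \Cref{l:extnotriple}. For each case, the Fekete--Szeg\H{o} argument underlying \Cref{t:ext2} (see \cite{feketeszego06}) supplies at least one choice of added edges that preserves $(2,0)$-tightness. If every such $(2,0)$-preserving choice creates a quadruple of parallel edges at some endpoint pair $\{x,y\}$, then $G$ already contains three parallel edges between $x$ and $y$; combining such triples with $v$ and its incident edges should produce a subgraph on a small vertex set whose edge count exceeds twice its vertex count, contradicting $(2,0)$-sparsity of $G$.

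The main obstacle is the $2$-reduction at a degree-$4$ vertex with four non-loop incidences $v_1,v_2,v_3,v_4$ (with possible coincidences). A $2$-reduction pairs these four neighbours into two new edges, so there are up to three candidate pairings, each simultaneously adding two edges that must jointly avoid quadruples and preserve sparsity on several, possibly overlapping, blocking tight subgraphs. The combinatorics of tracking which pairs already support doubles or triples in $G$ and which pairings are blocked by which tight subgraphs is the main technical difficulty; the no-quadruple hypothesis on $G$ must be exploited carefully, since otherwise a single triple between two neighbours of $v$ could conspire with a tight subgraph to rule out every pairing at once, and it is precisely this simultaneous failure that one has to exclude.
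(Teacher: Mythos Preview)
Your skeleton is right, but you are missing the observation that collapses the case you flag as the ``main technical difficulty''. You pick a vertex of degree at most $4$ and then split on $\deg(v)$, treating the $2$-reduction at a degree-$4$ vertex as the hard case because up to three pairings must be checked against several possibly overlapping blocking tight subgraphs. The paper orders the cases differently: first dispose of degree-$2$ vertices; then note that if the \emph{minimum} degree is $4$, the identity $|E|=2|V|$ forces $G$ to be $4$-regular. Any $2$-reduction (or loop-$2$-reduction) at a vertex of a $4$-regular multigraph produces another $4$-regular multigraph, and a $4$-regular multigraph is automatically $(2,0)$-tight, since a subgraph violating $(2,0)$-sparsity would have average degree exceeding $4$. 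So in this case there are no blocking tight subgraphs whatsoever, and all that remains is a short local check that some pairing avoids a quadruple. Only after this does one turn to the minimum-degree-$3$ case, which is handled exactly as in \Cref{l:extnotriple} with ``triple'' replaced by ``quadruple''.

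Without the $4$-regularity observation, your appeal to the Fekete--Szeg\H{o} argument to supply a $(2,0)$-preserving $2$-reduction at your chosen degree-$4$ vertex is not justified: the construction in \cite{feketeszego06} guarantees that \emph{some} reduction exists, not that one exists at every low-degree vertex, and indeed the paper never invokes it in the degree-$4$ case. The difficulty you describe --- tracking which pairings are blocked by which tight subgraphs --- is real if you proceed as you propose, and you do not actually resolve it. The missing idea is small but decisive: it turns the hardest-looking case into the easiest.
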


\begin{proof}
One direction follows immediately from \Cref{t:ext2}.
For the converse we first note that $G$ is $(2,0)$-tight if and only if every connected component is $(2,0)$-tight, so we may suppose $G=(V,E)$ is a connected $(2,0)$-tight multigraph distinct from $K_1^{2}$ with no quadruple of parallel edges. By applying the hand-shaking lemma we see that $G$ has a vertex of degree 2, 3 or 4. If there exists a degree 2 vertex, then the multigraph can be reduced to a smaller one with the required properties by a $0$-reduction. On the other hand, if $G$ has minimum degree 4 then the fact that $|E|=2|V|$ immediately implies that it is 4-regular. Fix a vertex $v$. Since $G \neq K_1^{2}$, the result of a 2-reduction at $v$ is 4-regular and hence $(2,0)$-tight (if a subgraph violates $(2,0)$-sparsity then the average degree in that subgraph is strictly greater than 4). Thus we are done unless we have added a 4th parallel edge between two vertices. This is only possible if the 2-reduction adds a 4th copy of an edge between two vertices. This implies that either there is a pair or a triple of parallel edges between two distinct neighbours $x,y$ of $v$ in $G$. In the former case it must be that there are two copies of the edge $vx$ and two copies of the edge $vy$, and so the reduction adding a loop on $x$ and a loop on $y$ is always possible. Suppose the latter case holds. Then $x$ has exactly two neighbours $y,v$ with a triple of parallel edges to $y$. Consider the 2-reduction at $x$ which adds a loop at $y$ and an edge between $yz$. The fact that $G$ is 4-regular implies that the multigraph $H$ resulting from this 2-reduction at $x$ is 4-regular and has no quadruple of parallel edges. 
    
So we may suppose the minimal degree of $G$ is 3.
If $G$ has a vertex of degree 3 incident to a loop then a loop 1-reduction is always possible at that vertex.
So suppose no degree 3 vertex of $G$ is incident to a loop. We now argue exactly as in \Cref{l:extnotriple}. It follows from the proof of \Cref{t:ext} (see \cite{feketeszego06}) that there is a 1-reduction at a vertex $v$ to a smaller $(2,0)$-tight multigraph. Suppose that every such 1-reduction at $v$ creates a quadruple of parallel edges. Note that two of the possible 1-reductions add only a new loop so cannot create a quadruple of parallel edges.

If $N(v)=\{x,y,z\}$ then without loss we may assume that $G$ contains three parallel edges between $x$ and $y$. We now consider the 1-reduction at $v$ that adds $xz$. This fails to result in a $(2,0)$-tight multigraph if and only if there is a $(2,0)$-tight subgraph $H$ of $G$ such that $x,z\in V(H)$ and $y,v\notin V(H)$. (To see that $y\notin V(H)$ simply note that if it was then $H+v$ would violate $(2,0)$-sparsity.)
However, if such a subgraph $H$ exists then we may add $y$ and its three parallel edges to $x$ to obtain a subgraph of $G$ that is not $(2,0)$-sparse.
Hence, the 1-reduction at $v$ adding $xz$ results in a $(2,0)$-tight multigraph and we are done unless there are already three parallel edges between $x$ and $z$. However, this implies that the subgraph of $G$ induced by $v$ and its neighbours has at least 9 edges and only 4 vertices, contradicting $(2,0)$-sparsity.

Therefore, $v$ has two distinct neighbours $x,y$ with a double edge from $v$ to $x$.
We now know two things from our assumptions:
(i) the vertex $x$ lies in a $(2,0)$-tight subgraph $H$ of $G$ (as otherwise we could perform the 1-reduction at $v$ that adds a loop at $x$ which does not form a triple of parallel edges), and
(ii) there exists a triple of parallel edges between $x,y$ (since the only remaining possible 1-reduction at $v$ to a smaller $(2,0)$-tight multigraph adds an edge between $x,y$, and this must form a quadruple of parallel edges).
However, the subgraph of $G$ formed from adding the vertex $y$ and the edges between the vertices $x,y$ to $H$ is not $(2,0)$-tight,
contradicting our original assumption. 
This completes the proof.
\end{proof}

\begin{lemma}\label{l:gain2extnoquad}
	Let $(G,\phi)$ be a $\Gamma$-gain graph where $\Gamma$ is a dihedral group,
	and let $G'=(V',E')$ be a multigraph formed from $G$ by either a $0$-extension, $1$-extension, loop-$1$-extension, $2$-extension or loop-$2$-extension.
	Suppose neither $G$ or $G'$ contain a quadruple of parallel edges.
	If $(G,\phi)$ is $\Gamma$-symmetrically rigid in $\mathbb{R}^2$ then there exists a gain map $\phi' :\vec{E'} \rightarrow \Gamma$ where $\phi'(e,v,w)=\phi(e,v,w)$ for all $(e,v,w) \in \vec{E} \cap \vec{E}'$ and $(G',\phi')$ is $\Gamma$-symmetrically rigid in $\mathbb{R}^2$.
\end{lemma}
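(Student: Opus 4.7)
The plan is to follow the template of Lemma~\ref{l:gainextnotriple}, combining Lemma~\ref{l:gainext} with the case analysis inside the proof of Lemma~\ref{l:gain2ext}, and isolating exactly the sub-cases that force $|\Gamma|\geq 5$. By Lemma~\ref{l:d2k}, any dihedral subgroup $\Gamma\leq O(2)$ (that is, one containing both a rotation and a reflection) satisfies $|\Gamma|\geq 4$, so I want to show that the only obstruction to running the earlier constructions at $|\Gamma|=4$ is the creation of a quadruple of parallel edges, which is ruled out by hypothesis.

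First I would handle the $0$-extension, $1$-extension and loop-$1$-extension cases by a direct appeal to Lemma~\ref{l:gainext}: its proof only uses $|\Gamma|\geq 3$, so it applies verbatim to any dihedral $\Gamma$ and yields a rigid $(G',\phi')$ via Lemma~\ref{l:jordkasztani16}. Next, for loop-$2$-extensions, the construction in the second half of the proof of Lemma~\ref{l:gain2ext} only requires the existence of one non-identity element of $\Gamma$, so it transfers without change.

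The key step is the $2$-extension case. I would revisit the five sub-cases (1)--(5) from the proof of Lemma~\ref{l:gain2ext} in turn and count, for each, how many elements of $\Gamma$ must be avoided to produce a valid gain map. Sub-case~(1) needs no exclusions at all; sub-cases~(2), (3) and (4) each require avoiding at most two specified elements of $\Gamma$, hence succeed whenever $|\Gamma|\geq 4$. Only sub-case~(5), where $v_1=v_2=v_3=v_4$, genuinely required $|\Gamma|\geq 5$. But in this sub-case the four new edges $e'_1,e'_2,e'_3,e'_4$ all join $v_0$ to the single common vertex $v_1$, so $G'$ contains a quadruple of parallel edges, contradicting the hypothesis. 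Hence sub-case~(5) cannot occur, the gain assignment produced in sub-cases~(1)--(4) is well-defined, and Lemma~\ref{l:jordkasztani16.2} delivers the required symmetric rigidity of $(G',\phi')$.

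The main obstacle I anticipate is purely bookkeeping: carefully re-examining each sub-case in the proof of Lemma~\ref{l:gain2ext} to confirm that the element-exclusion counts listed above are correct, and therefore that sub-case~(5) is indeed the \emph{only} sub-case that the no-quadruple hypothesis needs to suppress. Once this verification is in hand, the proof reduces to citing Lemmas~\ref{l:gainext}, \ref{l:gain2ext}, \ref{l:jordkasztani16} and \ref{l:jordkasztani16.2}.
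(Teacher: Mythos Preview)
Your proposal is correct and follows essentially the same approach as the paper: the paper's proof is a one-line remark that the result follows from Lemmas~\ref{l:gainext} and~\ref{l:gain2ext} by noting that the only extension moves requiring $|\Gamma|\geq 5$ are those creating a quadruple of parallel edges. Your plan simply spells out this observation case by case, with the same identification of sub-case~(5) as the sole obstruction.
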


\begin{proof}
    This follows from \Cref{l:gainext,l:gain2ext} by noting that the only extension moves that require $|\Gamma| \geq 5$ are those that create a quadruple of parallel edges.
\end{proof}

\begin{theorem}\label{thm:2dsmall}
	Let $\Gamma$ be a subgroup of $O(2)$ with $k(\Gamma)=0$ and $|\Gamma| =4$,
	and let $G$ be a $(2,0)$-tight multigraph with no quadruple of parallel edges.
	Then there exists a gain map $\phi : \vec{E} \rightarrow \Gamma$ such that $(G, \phi)$ is $\Gamma$-symmetrically rigid in $\mathbb{R}^2$.
\end{theorem}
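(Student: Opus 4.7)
The plan is to mimic the proof of Theorem~\ref{t:d2k0}, but replacing Theorem~\ref{t:ext2} with the refined construction Lemma~\ref{l:extnofour} (which avoids quadruples of parallel edges throughout), and replacing Lemma~\ref{l:gain2ext} with Lemma~\ref{l:gain2extnoquad} (which removes the $|\Gamma|\geq 5$ hypothesis whenever no quadruple of parallel edges is introduced). The restricted class of extensions guaranteed by Lemma~\ref{l:extnofour} is exactly what compensates for the fact that $|\Gamma|=4$ is too small to apply Lemma~\ref{l:gain2ext} in full generality.

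First I would use Lemma~\ref{l:d2k}(iv) to observe that, since $k(\Gamma)=0$ and $|\Gamma|=4$, the group $\Gamma$ is the dihedral group generated by the $2$-fold rotation and a single reflection; in particular $\Gamma$ contains both rotations and reflections, and any two distinct non-identity elements of $\Gamma$ generate a dihedral subgroup (namely $\Gamma$ itself). Next I would reduce to the connected case: a $(2,0)$-tight multigraph has each connected component $(2,0)$-tight, since $|E|=2|V|$ together with the $(2,0)$-sparsity inequality on each component forces equality componentwise. Gains can then be assigned to each component independently; the orbit rigidity matrix of the disjoint union is block diagonal, so $\Gamma$-symmetric rigidity of each component combined with $k(\Gamma)=0$ gives $\Gamma$-symmetric rigidity of the whole multigraph.

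For the base case I would equip $K_1^2$ with the gain map assigning its two loops any two distinct non-identity elements $\gamma_1,\gamma_2\in\Gamma$. Since $\langle\gamma_1,\gamma_2\rangle=\Gamma$ is dihedral and $k(\Gamma)=0$, the orbit rigidity matrix of $(K_1^2,\phi')$ at a generic placement has rank $2=d|V|-k$, so $(K_1^2,\phi')$ is $\Gamma$-symmetrically rigid in $\mathbb{R}^2$. By Lemma~\ref{l:extnofour}, the connected multigraph $G$ can be built from $K_1^2$ by a sequence of $0$-, $1$-, loop-$1$-, $2$- and loop-$2$-extensions, no intermediate multigraph of which contains a quadruple of parallel edges. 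Inductively applying Lemma~\ref{l:gain2extnoquad} at each step produces a gain map $\phi$ on $G$ extending the gains chosen so far and with $(G,\phi)$ $\Gamma$-symmetrically rigid in $\mathbb{R}^2$, completing the proof.

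The only step that requires any care is the base case together with the passage between components, but each is essentially bookkeeping: the base case is a direct rank computation, and the component reduction is immediate from the block-diagonal structure of the orbit rigidity matrix and the vanishing of the space of trivial $\Gamma$-symmetric flexes when $k(\Gamma)=0$. The real content of the theorem is already packaged in Lemmas~\ref{l:extnofour} and~\ref{l:gain2extnoquad}, so no further obstacle is expected.
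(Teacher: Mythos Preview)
Your proposal is correct and follows essentially the same route as the paper: use Lemma~\ref{l:extnofour} to build $G$ from $K_1^2$ by extensions avoiding quadruples, verify the base case $(K_1^2,\phi')$ is $\Gamma$-symmetrically rigid, and apply Lemma~\ref{l:gain2extnoquad} inductively. The only cosmetic difference is that you handle disconnectedness by reducing to connected components up front (using $k(\Gamma)=0$ and the block-diagonal orbit rigidity matrix), whereas the paper keeps $G$ whole and deals with the extra loop-$0$-extensions appearing in Lemma~\ref{l:extnofour} by observing directly that they preserve $\Gamma$-symmetric rigidity; both are equivalent bookkeeping.
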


\begin{proof}
	By \Cref{l:extnofour},
	$G$ can be formed from $K_1^2$ by a sequence of $0$-extensions, loop-0-extensions, $1$-extensions, loop-$1$-extensions, 2-extensions and loop-2-extensions that do not form a quadruple of parallel edges.
	Choose any distinct non-identity elements $\gamma_1,\gamma_2 \in \Gamma$ so that the group $\langle \gamma_1,\gamma_2 \rangle$ is dihedral.
	$\gamma_1,\gamma_2 \in \Gamma$ so that $\langle \gamma_1,\gamma_2 \rangle = \Gamma$.
	Define $\phi': \vec{E(K_1^2)} \rightarrow \Gamma$ so that the loops of $K_1^2$ have gains $\gamma_1$ and $\gamma_2$.
	It is immediate that $(K_1^2,\phi')$ is $\Gamma$-symmetrically rigid in $\mathbb{R}^2$.
	Moreover, we note that this implies that loop-0-extensions preserve $\Gamma$-symmetric rigidity.
	By applying \Cref{l:gain2extnoquad} inductively we see that there exists a gain map $\phi: \vec{E} \rightarrow \Gamma$ so that $(G,\phi)$ is $\Gamma$-symmetrically rigid in $\mathbb{R}^2$.
\end{proof}

We remark that a special case of \Cref{t:d2k0,thm:2dsmall} partially resolves the even order dihedral symmetry problem. Motivated by potential future strengthening of these results to a complete solution we  consider, in \Cref{sec:prob}, how many gain assignments result in symmetrically rigid frameworks.

\section{Higher dimensional symmetry groups}
\label{sec:highd}

In the previous sections we have focused on symmetry groups in the plane.
We now instead focus on symmetry groups in $d$-space.
We begin the section by investigating symmetry groups of translations,
where we recap a result of \cite{ww88}.
This illustrates some of the techniques we use later on when we move on to point groups that are dense in $O(d)$.

\subsection{Periodic frameworks}

Periodic structures arise naturally in material science and crystallography, where the rigidity of polytope networks is of particular interest \cite{Born, Wegner}. During the last 15 years, these structures have been studied from a theoretical approach, since periodicity admits a block diagonalization of the rigidity matrix over the integers. These diagonal blocks give rise to periodic flexes, which can be considered in the fixed \cite{Ross2014} or the flexible lattice case \cite{BS10,MT13}. More results from an operator-theoretic perspective were initiated in \cite{OwenPower11}, which gave applications on factor periodic flexes and the rigid unit mode (RUM) spectrum were initiated. Further studies on the RUM spectrum are discussed in \cite{KKM21}.

We first give a proof of a reformulation of \cite[Theorem 13]{ww88}, as a warm up for what follows. 

\begin{theorem}\label{t:periodic}
	Let $G$ be a $(d,d)$-tight multigraph and $\Gamma$ be a group of translations with $d$ linearly independent generators.
	Then there exists a gain map $\phi : \vec{E} \rightarrow \Gamma$ such that $(G, \phi)$ is $\Gamma$-symmetrically rigid in $\mathbb{R}^d$.
\end{theorem}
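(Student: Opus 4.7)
The plan is to exhibit a specific gain map via a spanning-tree decomposition of $G$ and then verify rigidity by evaluating the orbit rigidity matrix at a degenerate placement and invoking Zariski openness.

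First I would compute $k(\Gamma)$. Since every $\gamma\in\Gamma$ is a translation, $\gamma_\ell=\mathrm{id}$, so any $\Gamma$-symmetric infinitesimal flex of $(\mathbf{G},\mathbf{p})$ satisfies $\mathbf{u}(v,\gamma)=\mathbf{u}(v,1)$, and the trivial ones reduce to constant maps $c\in\mathbb{R}^d$; hence $k(\Gamma)=d$. The necessary count $|E|=d|V|-d$ coming from \Cref{prop:necgain} matches the $(d,d)$-tight hypothesis, so the goal is to produce $\phi$ and an orbit placement $p$ so that the $(d|V|-d)\times d|V|$ matrix $R(G,\phi,p)$ has independent rows.

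By the Nash-Williams/Tutte theorem, a $(d,d)$-tight loopless multigraph decomposes into $d$ edge-disjoint spanning trees $T_1,\ldots,T_d$. Fix an ordered basis $t_1,\ldots,t_d$ of $\Gamma$ (regarded as linearly independent translation vectors in $\mathbb{R}^d$), together with an arbitrary orientation on the edges of $G$. Define $\phi$ by assigning the translation $t_i$ to each oriented edge of $T_i$; this is a legitimate gain map since parallel edges necessarily lie in different trees and receive distinct gains $t_i\neq t_j$, and we may assume $G$ is loopless, as translation gains on loops produce identically zero rows in the orbit rigidity matrix.

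Next I would evaluate $R(G,\phi,p)$ at $p\equiv 0$. For a translation gain $\gamma$ with vector $t$, the formulas give $p(v)-\gamma p(w)=-t$, so the row associated to an edge $(v,w)\in T_i$ has $-t_i$ in the $d$ columns indexed by $v$, $+t_i$ in the $d$ columns indexed by $w$, and zero elsewhere. After a linear change of coordinates on $\mathbb{R}^d$ sending $t_i\mapsto e_i$, re-ordering columns by coordinate produces a block-diagonal matrix whose $i$-th block is the signed incidence matrix of $T_i$. Each such block has rank $|V|-1$, so the total rank is $d(|V|-1)=d|V|-d$.

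Finally, the locus of orbit placements on which $\rank R(G,\phi,p)\geq d|V|-d$ is the non-vanishing set of a maximal minor and hence Zariski open; by the previous step it is non-empty, so it is dense. Combined with the universal upper bound $\rank R(G,\phi,p)\leq d|V|-d$ coming from the $d$-dimensional kernel of constant flexes, this forces $\rank R(G,\phi,p)=d|V|-d$ on a dense open set and proves $\Gamma$-symmetric rigidity of $(G,\phi)$. The main obstacle is engineering the gain map so that the contributions of different trees decouple cleanly across coordinates; pairing the Nash-Williams/Tutte decomposition with a lattice basis of $\Gamma$ is precisely what achieves this, reducing an a priori delicate rank computation to a direct sum of signed incidence matrices.
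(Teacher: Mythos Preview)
Your proof is correct and essentially identical to the paper's: both invoke Nash--Williams to decompose $G$ into $d$ edge-disjoint spanning trees, assign the $i$-th lattice generator as the gain on every edge of $T_i$, and evaluate the orbit rigidity matrix at the degenerate placement $p\equiv 0$. The only cosmetic differences are that the paper reads off the kernel directly as the space of constant flexes whereas you reach the same rank via a change of basis to block-diagonal incidence matrices, and your final Zariski-openness paragraph is superfluous since the placement $p\equiv 0$ already witnesses $\Gamma$-symmetric rigidity.
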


	It is worth noting that this is not the same as the main theorem given in \cite{borceastreinu11} since we are not allowing the lattice of periodicity to deform.

\begin{proof}
	Since $\Gamma$ is a group of translations,
	we consider $\Gamma$ to be a subgroup of $\mathbb{R}^d$ (with respect to addition).
	We also define $[x]_j$ to be the $j$-th coordinate of any vector $x \in \mathbb{R}^d$.
	
	As $G$ is $(d,d)$-tight then, by Nash-Williams theorem \cite{nashwilliams61}, there exist $d$ edge-disjoint spanning trees $T_1,\ldots,T_d$.
	We shall assign each tree some arbitrary orientation so that they are directed. Let $\gamma_1,\ldots,\gamma_d$ denote linearly independent generators of $\Gamma$. We define $\phi : \vec{E} \rightarrow \Gamma$ to be the gain map where,
	given $(e,v,w) \in T_i$,
	we have $\phi(e,v,w) = \gamma_i$.
	We now define the $|E| \times d|V|$ matrix $M$ with entries,
	where
	\begin{align*}
		M_{e,(v,j)}
		:= 
		\begin{cases}
			-[\gamma_i]_j &\text{if } (e,v,w) \in T_i, \\
			[\gamma_i]_j &\text{if } (e,w,v) \in T_i, \\
			0 &\text{otherwise}.
		\end{cases}
	\end{align*}
	As $\gamma_1,\ldots,\gamma_d$ are linearly independent,
	we have that $\ker M$ is exactly the vectors $(x)_{v\in V, ~ 1 \leq j\leq d}$ for $x \in \mathbb{R}^d$.
	
	Let $\mathbf{p} :\mathbf{V} \rightarrow \mathbb{R}^d$ be the placement of $\mathbf{G}$ with $\mathbf{p}(v,0) = 0$ for all $v \in V$.
	We now note that $M = R(G,\phi,p)$,
	hence $(\mathbf{G},\mathbf{p})$ is $\Gamma$-symmetrically rigid as required.
\end{proof}

\subsection{Periodic frameworks with additional symmetry}

We next extend \Cref{t:periodic} to frameworks with additional symmetry. We do this in the next two theorems for all groups containing translations and certain additional linear isometries. In both cases we use a combinatorial decomposition of $(d,0)$-tight graphs into $d$ $(1,0)$-tight spanning subgraphs (see \cite[Corollary 3]{ww88}). In the first theorem we need the additional assumption that these subgraphs are connected; we conjecture that this hypothesis can be removed.

\begin{theorem}\label{t:trans+lin}
    Let $G=(V,E)$ be a $(d,0)$-tight multigraph with a spanning $(d,d)$-tight subgraph $H=(V,F)$,
    and let $\Gamma$ be a $d$-dimensional symmetry group generated by a group $\Gamma_t$ of translations with $d$ linearly independent generators and a point group $\Gamma_\ell \leq O(d)$ where $\bigcap_{\gamma \in \Gamma_\ell}\ker (I - \gamma) = \{0\}$.
	Then there exists a gain map $\phi : \vec{E} \rightarrow \Gamma$ such that $(G, \phi)$ is $\Gamma$-symmetrically rigid in $\mathbb{R}^d$.
\end{theorem}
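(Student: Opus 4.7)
The plan is to follow the strategy of \Cref{t:periodic}, using $d$ spanning trees of $H$ to produce an orbit rigidity matrix whose kernel is exactly the constant infinitesimal flexes, and then to exploit the $d$ additional edges in $E\setminus F$ together with the hypothesis on $\Gamma_\ell$ to kill those remaining flexes. Concretely, I would first apply Nash-Williams to decompose $H$ into $d$ edge-disjoint spanning trees $T_1,\ldots,T_d$, fix linearly independent generators $\tau_1,\ldots,\tau_d$ of $\Gamma_t$, and assign the gain $\tau_i$ to every edge of $T_i$. Exactly as in the proof of \Cref{t:periodic}, at the placement $p\equiv 0$ the rows of the orbit rigidity matrix coming from $H$ form a block of rank $d|V|-d$ whose kernel is precisely the $d$-dimensional space of constant vector-valued maps $v\mapsto x\in\mathbb{R}^d$.

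For each extra edge $e_i\in E\setminus F$, I would choose a gain $\delta_i\in\Gamma$ with linear part $\gamma_i\in\Gamma_\ell$ and translation part $s_i\in\Gamma_t$. A direct calculation at $p\equiv 0$ shows that, whether $e_i$ is a loop or an ordinary edge, the row for $e_i$ in the orbit rigidity matrix pairs with any constant flex $v\mapsto x$ to produce the single linear constraint $z_i\cdot x=0$, where $z_i:=(I-\gamma_i^{-1})s_i$. Hence it suffices to arrange the $(\gamma_i,s_i)$ so that $z_1,\ldots,z_d$ span $\mathbb{R}^d$; the matrix $R(G,\phi,0)$ will then be a square $d|V|\times d|V|$ matrix with trivial kernel, certifying that $(G,\phi,0)$ is $\Gamma$-symmetrically rigid.

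To produce such a spanning set I note that the hypothesis $\bigcap_{\gamma\in\Gamma_\ell}\ker(I-\gamma)=\{0\}$ is equivalent to $\sum_{\gamma\in\Gamma_\ell}\im(I-\gamma)=\mathbb{R}^d$, and that for each non-trivial $\gamma\in\Gamma_\ell$ the finitely generated subgroup $(I-\gamma^{-1})\Gamma_t$ of $\im(I-\gamma)$ has full $\mathbb{R}$-span, because $\Gamma_t$ contains $d$ linearly independent translations. Combining these observations, the set $\{(I-\gamma^{-1})s:\gamma\in\Gamma_\ell\setminus\{I\},\,s\in\Gamma_t\}$ spans $\mathbb{R}^d$ and so contains a basis from which the $z_i$ can be extracted. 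The main subtlety I expect to handle is ensuring that the resulting $\delta_i$ satisfy the gain-map axioms — distinct values on parallel edges and non-trivial values on loops — but since each $z_i$ admits infinitely many preimages in $\Gamma_\ell\times\Gamma_t$, clashes can be resolved by passing to a different preimage.
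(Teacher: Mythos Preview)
Your proposal is correct and follows essentially the same route as the paper: place all vertices at the origin, use the Nash--Williams tree decomposition of $H$ with translation gains to cut the kernel down to constant flexes, and then assign to the $d$ extra edges composite gains whose associated vectors $(I-\gamma_i^{-1})s_i$ span $\mathbb{R}^d$, which is possible precisely by the hypothesis $\bigcap_{\gamma\in\Gamma_\ell}\ker(I-\gamma)=\{0\}$. The only cosmetic differences are that the paper restricts the translation parts $s_i$ to the fixed generators $x_1,\ldots,x_d$ of $\Gamma_t$ and phrases the spanning condition dually (as a family of hyperplanes with trivial intersection), and it does not explicitly address the gain-map axiom subtlety you mention at the end.
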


While the condition that $\bigcap_{\gamma \in \Gamma_\ell}\ker (I - \gamma) = \{0\}$ is needed for technical reasons in the proof that follows, it is easy to see that many groups satisfy this condition. For example, if $d=3$, it is sufficient that $\Gamma_\ell$ contains either a rotation with no fixed axis, or two rotations with disjoint fixed axes.

\begin{proof}
    Let $p:V \rightarrow \mathbb{R}^d$ be the map where $p(v) = 0$ for all $v \in V$.
    By using the methods from \Cref{t:periodic},
    we see that there exists a gain map $\phi_t : \vec{F} \rightarrow \Gamma_t$ such that $(H, \phi_t,p)$ is $\Gamma_t$-symmetrically rigid in $\mathbb{R}^d$.
    Label the remaining edges $e_1, \ldots, e_d \in E \setminus F$,
    and let $v_i,w_i$ be the end-points of $e_i$.
    
    Fix $d$ linearly independent translations $\gamma_1,\ldots, \gamma_d \in \Gamma_t$,
    and let $\{x_1,\ldots, x_d\} \subset \mathbb{R}^d$ be the basis where $\gamma_i(x) = x + x_i$ for each $i \in \{1,\ldots,d\}$.
    For every $i \in \{1,\ldots,d\}$ and $\gamma \in \Gamma_\ell$,
    define the linear space $V_{i,\gamma} := \{ x \in \mathbb{R}^d : (I - \gamma^{-1})(x_i) \cdot x  = 0 \}$.
    We note two things;
    if $x_i \notin \ker (I - \gamma^{-1})$,
    then $V_{i,\gamma}$ has dimension $d-1$,
    and $\bigcap_{i = 1}^d V_{i,\gamma} = \ker(I - \gamma^{-1})$.
    Hence, there exist $n_1,\ldots,n_d \in \{1,\ldots,d\}$ and $\mu_1,\ldots,\mu_d \in \Gamma_\ell$ where $\bigcap_{i=1}^d V_{n_i, \gamma_i} = \{0\}$.
    
    Define $\phi:\vec{E} \rightarrow \Gamma$, where $\phi(e,v,w) = \phi_t(e,v,w)$ for each $e \in F$,
    and $\phi(e_i,v_i,w_i) = \gamma_{n_i} \circ \mu_i$ for each $i \in \{1,\ldots,d\}$.
    Now choose a $\Gamma$-symmetric flex $u : V \rightarrow \mathbb{R}^d$ of $(G,\phi,p)$.
    Since $(H, \phi_t,p)$ is $\Gamma_t$-symmetrically rigid in $\mathbb{R}^d$,
    there exists $z \in \mathbb{R}^d$ so that $u(v) = z$ for all $v \in V$.
    For each edge $(e_i,v_i,w_i)$,
    we note that
    \begin{align*}
        0 = \Big(p(v_i) - \gamma_{n_i} \circ \mu_i(p(w_i))\Big) \cdot u(v_i) + \Big(p(w_i) - \mu_i^{-1} \circ \gamma_{n_i}^{-1} (p(v_i))\Big) \cdot u(w_i) = -(I - \mu_i^{-1})(x_{n_i}) \cdot z, 
    \end{align*}
    and so $z \in \bigcap_{i=1}^d V_{n_i, \gamma_i} = \{0\}$.
    Hence, $u$ is trivial and $(G,\phi,p)$ is $\Gamma$-symmetrically rigid in $\mathbb{R}^d$.
\end{proof}

We can improve \Cref{t:trans+lin} when $\Gamma_\ell$ contains the reflection through the origin.

\begin{theorem}\label{t:trans+inv}
    Let $G=(V,E)$ be a $(d,0)$-tight multigraph,
    and let $\Gamma$ be a $d$-dimensional symmetry group containing a subgroup $\Gamma_t$ of translations with $d$ linearly independent generators and the map $-I : x \mapsto -x$.
	Then there exists a gain map $\phi : \vec{E} \rightarrow \Gamma$ such that $(G, \phi)$ is $\Gamma$-symmetrically rigid in $\mathbb{R}^d$.
\end{theorem}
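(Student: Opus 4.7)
My plan is to follow the same template as \Cref{t:periodic} and \Cref{t:trans+lin}: place all vertices at the origin and read rigidity directly off the orbit rigidity matrix, but with a gain map that exploits $-I$ rather than extra spanning-tree edges. The starting point is to decompose the $(d,0)$-tight multigraph $G$, via the pseudoforest decomposition of \cite[Corollary~3]{ww88}, as an edge-disjoint union of $d$ spanning $(1,0)$-tight subgraphs $H_1,\ldots,H_d$. Each $H_i$ has $|V|$ edges, and each of its connected components is a tree-plus-one-edge, so contains exactly one cycle --- either a loop, a doubled edge, or a longer cycle.

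Next, I would fix generators $\gamma_1,\ldots,\gamma_d\in\Gamma_t$ with linearly independent translation vectors $x_1,\ldots,x_d\in\mathbb{R}^d$. In each $H_i$ let $T_i\subseteq H_i$ be the union of a spanning tree of each non-loop component; then $H_i\setminus T_i$ consists of exactly one edge per component, namely the cycle-closer in each non-loop component and the single loop in each loop component. Define $\phi$ by $\phi(e,v,w)=\gamma_i$ on oriented tree edges $(e,v,w)\in T_i$ and $\phi(e,v,w)=(-I)\circ\gamma_i$ on the cycle-closing and loop edges of $H_i$. To check that $\phi$ is a valid gain map I would use $(d,0)$-sparsity to bound any parallel bundle between distinct vertices by $2d$ edges and the number of loops at any vertex by $d$, observe that each $H_i$ contributes at most two edges to such a bundle (a $2$-cycle in $H_i$ whose edges receive the distinct gains $\gamma_i$ and $(-I)\gamma_i$) and at most one loop per vertex, and note that the pool of gains $\gamma_i^{\pm 1}$ and $(-I)\gamma_i$ for $1\le i\le d$ is pairwise distinct (no translation equals $-I$ composed with a translation, and the $\pm x_i$ are distinct by linear independence).

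Rigidity would then be verified directly at the placement $p\equiv 0$: a tree edge $(e,v,w)\in T_i$ contributes the equation $x_i\cdot(u(w)-u(v))=0$, a non-loop cycle-closer in $H_i$ contributes $x_i\cdot(u(v)+u(w))=0$, and a loop at $v$ in $H_i$ contributes $x_i\cdot u(v)=0$. Fixing $i$ and a component $C$ of $H_i$, the tree constraints force $x_i\cdot u$ to be a common constant on $V(C)$ by spanning-tree connectivity, and the unique cycle-closing or loop equation in $C$ then forces this constant to equal $0$. Since $H_i$ is spanning, this gives $x_i\cdot u\equiv 0$ on $V$; letting $i$ range over $1,\ldots,d$ and using the basis property of $\{x_1,\ldots,x_d\}$ yields $u\equiv 0$, so $(G,\phi)$ is $\Gamma$-symmetrically rigid.

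The main obstacle I foresee is not the rigidity computation itself, which is essentially a tree-propagation argument made possible by the $(-I)\gamma_i$ gains creating pinning equations on the single cycle of each component, but rather the bookkeeping required to check that $\phi$ is a well-defined gain map when $G$ saturates the $(d,0)$-sparsity bound --- between a pair of vertices the bound $2d$ exactly matches the number of distinct gains available in the pool $\{\gamma_i,(-I)\gamma_i : 1\le i\le d\}$, so there is no slack, and this is where the availability of the $-I$ twin gains (versus the $d$ translations alone used in \Cref{t:trans+lin}) becomes essential.
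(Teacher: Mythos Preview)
Your approach is essentially identical to the paper's: the same pseudoforest decomposition into $d$ spanning $(1,0)$-tight subgraphs, the same gain assignment ($\gamma_i$ on tree edges, $\gamma_i\circ(-I)$ on the unique cycle-closer of each component), the same all-zero placement, and the same tree-propagation argument forcing $x_i\cdot u\equiv 0$ for each $i$. The only difference is that you take extra care to verify that $\phi$ is a valid gain map (distinct gains on parallel edges and non-trivial gains on loops), a point the paper simply asserts without checking; your verification is correct, though the bound of $2d$ parallel edges and $d$ loops per vertex comes from the $(1,0)$-sparsity of each $H_i$ rather than directly from $(d,0)$-sparsity of $G$ when $d\geq 3$.
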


\begin{proof}
    As $G$ is $(d,0)$-tight,
    we can partition $G$ into $d$ spanning $(1,0)$-tight graphs $G_1,\ldots G_d$.
    For each $G_i = (V, E_i)$,
    let $G_{i,1}  = (V_{i,1},E_{i,1}),\ldots , G_{i,n_i}  = (V_{i,d},E_{i,d})$ be the connected components of $G_i$,
    let $T_{i,j} = (V_{i,j},F_{i,j})$ be the spanning tree of $G_{i,j}$ and let $e_{i,j}$ be the unique edge in $E_{i,j} \setminus F_{i,j}$.
    We shall assign each edge some arbitrary orientation, and we label the source and sink of each edge $e_{i,j}$ as $v_{i,j}$ and $w_{i,j}$ respectively.
    Now define $\phi:\vec{E} \rightarrow \mathbb{R}^d$ be the $\Gamma$-gain map, where $\phi(e,v,w) = \gamma_i$ for every edge $e \in F_{i,j}$,
    and $\phi(e_{i,j},v_{i,j},w_{i,j}) = \gamma_i \circ (-I)$.
    
    Fix $p:V \rightarrow \mathbb{R}^d$ where $p(v) = 0$ for all $v \in V$,
    and choose a $\Gamma$-symmetric flex $u : V \rightarrow \mathbb{R}^d$ of $(G,\phi,p)$.
    Fix $i,j$ and choose any adjacent vertices $v,w \in V_{i,j}$ in $T_{i,j}$.
    Given the edge $e \in F_{i,j}$ has source $v$ and sink $w$, we have that
    \begin{align*}
        x_i \cdot (u(w) - u(v)) = (p(v) - \gamma_i p(w)) \cdot u(v) + (p(w) - \gamma_i^{-1} p(v)) \cdot u(w)  = 0,
    \end{align*}
    hence, $x_i \cdot u(v) = x_i \cdot u(w)$.
    It now follows from transitivity that $x_i \cdot u(v) = x_i \cdot u(w)$ for any pair of vertices $v,w \in V_{i,j}$.
    From observing the edge condition for $(e_{i,j},v_{i,j},w_{i,j})$,
    we see that
    \begin{eqnarray*}
        0 &=& (p(v_{i,j}) - (\gamma_i \circ -I) p(w_{i,j})) \cdot u(v_{i,j}) + (p(w_{i,j}) - (\gamma_i \circ -I)^{-1} p(v_{i,j})) \cdot u(w_{i,j}) \\
        &=& - x_i \cdot u(v_{i,j})- x_i \cdot u(w_{i,j}),
    \end{eqnarray*}
    hence, $x_i \cdot u(v_{i,j}) = - x_i \cdot u(w_{i,j})$.
    As $v_{i,j}, w_{i,j} \in V_{i,j}$,
    it follows that $x_i \cdot u(v_{i,j}) = x_i \cdot u(w_{i,j}) = 0$,
    hence, $x_i \cdot u(v) = 0$ for all $v \in V_{i,j}$.
    By applying this method for each $j \in \{1,\ldots, n_i\}$,
    we have that $x_i \cdot u(v) = 0$ for all $v \in V$.
    For each $v \in V$ we have $x_i \cdot u(v) = 0$ for each $i \in \{1,\ldots,d\}$,
    and thus $u(v)=0$.
    Hence, $u$ is trivial and $(G,\phi,p)$ is $\Gamma$-symmetrically rigid in $\mathbb{R}^d$.
\end{proof}

\subsection{Infinite point groups}

We next consider infinite point groups with a suitable density property.
For the following we remember that for higher dimensions,
a linear isometry $\gamma \in O(d)$ is a \emph{rotation} if $\det \gamma = 1$,
and we denote by $SO(d)$ the set of all $d$-dimensional rotations.
A \emph{reflection} is any linear isometry $\sigma \in O(d)$ with $\det \sigma =-1$ that has a linear hyperplane of fixed points.
An important property of reflections is that they are involutory (i.e.~$\sigma^{-1} =\sigma$) and $\gamma^{-1} \sigma \gamma$ is a reflection for any rotation $\gamma$.

\begin{lemma}\label{lem:approxisoms}
    Let $\Gamma \leq O(d)$ be a dense subgroup.
    Choose an orthonormal basis $f_1,\ldots,f_d$ of $\mathbb{R}^d$.
    Then for every $\varepsilon > 0$,
    there exist rotations $\gamma_1,\ldots,\gamma_{d-1} \in \Gamma$ and isometries $\sigma_1,\ldots,\sigma_{d-1}\in \Gamma$ so that for all $k \in \{1,\ldots,d-1\}$ we have
    \begin{align*}
	   \left\|-f_k - \frac{f_d-\gamma_k f_d}{\|f_d-\gamma_k f_d\|} \right\| &<\varepsilon, & \left\|f_k - \frac{f_d-\gamma_k^{-1} f_d}{\|f_d-\gamma_k f_d\|} \right\|  &<\varepsilon, \\
	   \left\|-f_k - \frac{f_d-\sigma_k f_d}{\|f_d-\sigma_k f_d\|} \right\| &<\varepsilon, & \left\|-f_k - \frac{f_d-\sigma_k^{-1} f_d}{\|f_d-\sigma_k f_d\|} \right\|  &<\varepsilon, &
	   \left\|-f_k - \frac{2f_d-\sigma_k f_d -\sigma_k^{-1} f_d}{2\|f_d-\sigma_k f_d\|} \right\|  &<\varepsilon,
	\intertext{and there exists an isometry $\sigma_d \in \Gamma$ so that}
	   \left\|f_d - \frac{f_d-\sigma_d f_d}{\|f_d-\sigma_d f_d\|} \right\| &<\varepsilon, & \left\|f_d - \frac{f_d-\sigma_d^{-1} f_d}{\|f_d-\sigma_d f_d\|} \right\|  &<\varepsilon, &
	   \left\|f_d - \frac{2f_d-\sigma_d f_d -\sigma_d^{-1} f_d}{2\|f_d-\sigma_d f_d\|} \right\|  &<\varepsilon.
	\end{align*}
\end{lemma}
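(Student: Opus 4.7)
The plan is to construct, for each relevant index, an idealized linear isometry in $O(d)$ (what I will call the \emph{target}) for which the prescribed normalized vectors can be made arbitrarily close to their stated limits by choosing a small parameter. Then density of $\Gamma$ in $O(d)$ will allow us to replace each target by a nearby group element without losing the estimates. A key preliminary observation, used throughout, is that for any $\gamma \in O(d)$ one has
\[
\|v - \gamma v\|^2 = 2\|v\|^2 - 2\langle v, \gamma v\rangle = \|v - \gamma^{-1} v\|^2,
\]
so the common denominator $\|f_d - \gamma f_d\|$ in each displayed line is the same whether one writes $\gamma$ or $\gamma^{-1}$.

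For each $k \in \{1,\dots,d-1\}$ I would take $\tilde\gamma_k \in SO(d)$ to be the rotation in the $(f_k,f_d)$-plane by a small angle $\theta>0$, sending $f_d$ to $\cos\theta\,f_d+\sin\theta\,f_k$. Using $1-\cos\theta = 2\sin^2(\theta/2)$ and $\sin\theta = 2\sin(\theta/2)\cos(\theta/2)$, one computes $\|f_d - \tilde\gamma_k^{\pm 1} f_d\| = 2\sin(\theta/2)$ and
\[
\frac{f_d - \tilde\gamma_k^{\pm 1} f_d}{\|f_d - \tilde\gamma_k f_d\|} = \sin(\theta/2)\,f_d \mp \cos(\theta/2)\,f_k,
\]
which approaches $\mp f_k$ as $\theta \to 0$. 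For the isometries $\tilde\sigma_k$ I take the reflection across the hyperplane with unit normal $n_k := -\cos\eta\,f_k + \sin\eta\,f_d$ for a small $\eta>0$; then $f_d - \tilde\sigma_k f_d = 2\sin\eta \cdot n_k$, whose normalization is exactly $n_k \to -f_k$, and since $\tilde\sigma_k$ is an involution the second and third normalized expressions coincide with the first. Finally, I take $\tilde\sigma_d$ to be the reflection across the hyperplane orthogonal to $f_d$; this sends $f_d$ to $-f_d$, making all three prescribed normalizations exactly equal to $f_d$.

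Given $\varepsilon>0$, I would first fix $\theta$ and $\eta$ small enough so that each target makes its relevant normalized vectors within $\varepsilon/2$ of the stated limit. Then I invoke density: pick $\sigma_k, \sigma_d \in \Gamma$ close to $\tilde\sigma_k, \tilde\sigma_d$, and $\gamma_k \in \Gamma$ close to $\tilde\gamma_k$. Since $SO(d)$ is open in $O(d)$ and $\tilde\gamma_k \in SO(d)$, the element $\gamma_k$ can be required to lie in $\Gamma \cap SO(d)$ and is therefore a rotation. Each of the maps $\gamma \mapsto (f_d - \gamma^{\pm 1} f_d)/\|f_d - \gamma f_d\|$, and the third variant involving both $\gamma$ and $\gamma^{-1}$, is continuous at every $\gamma$ with $\gamma f_d \neq f_d$; since the targets are bounded away from this degenerate locus (the denominators $2\sin(\theta/2)$ and $2\sin\eta$ being fixed positive constants), a sufficiently small neighborhood of each target transfers the $\varepsilon/2$ estimates for the target to $\varepsilon$ estimates for any group element in that neighborhood.

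The main technical subtlety, though not deep, is that approximating the involutions $\tilde\sigma_k, \tilde\sigma_d$ by arbitrary elements of $\Gamma$ destroys the exact involution property, so the three inequalities in each of the $\sigma_k$ and $\sigma_d$ lines no longer collapse to a single inequality and must each be verified by a separate continuity argument. They share a common limit and a common denominator, however, so once the targets are in place the estimates are entirely routine and the only real work is bookkeeping the number of simultaneous neighborhoods that have to be shrunk.
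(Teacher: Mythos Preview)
Your proposal is correct and follows essentially the same strategy as the paper: construct explicit target isometries in $O(d)$ (plane rotations $R_k(\theta)$ and reflections in the $f_kf_d$-plane, plus the reflection negating $f_d$) for which the normalized differences tend to the required limits, then invoke density and continuity to replace the targets by elements of $\Gamma$. The only cosmetic difference is that the paper parametrizes its target reflections as conjugates $R_k(\pi-\theta/2)^{-1}\sigma R_k(\pi-\theta/2)$ of the reflection $\sigma$ negating $f_d$, whereas you specify them directly via their unit normal $n_k$; these describe the same one-parameter family, and your description makes the limit computation more transparent. Your explicit remarks that $SO(d)$ is open in $O(d)$ (so the approximants $\gamma_k$ can be taken to be rotations) and that the three $\sigma$-inequalities must be checked separately once the exact involution property is lost are points the paper leaves implicit.
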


\begin{proof}
    Let $\sigma$ be the reflection with $\sigma f_d=-f_d$.
    For each $k \in \{1,\ldots,d-1\}$ and $0 \leq \theta <2\pi$,
    define $R_k(\theta) \in SO(d)$ to be the rotation in the $f_kf_d$-plane where $R_k(\theta)f_d = (\cos \theta) f_d + (\sin \theta)f_k$.
    Similarly,
    define for each $k \in \{1,\ldots,d-1\}$ the reflection $S_k(\theta) := R_k(\pi-\theta/2)^{-1} \sigma R_k(\pi-\theta/2)$.
    Then for suitably small $\theta_k$ we have
    \begin{align*}
	   \left\|-f_k - \frac{f_d-R_k(\theta_k) f_d}{\|f_d-R_k(\theta_k) f_d\|} \right\| &<\varepsilon, & \left\|f_k - \frac{f_d-R_k(\theta_k)^{-1} f_d}{\|f_d-R_k(\theta_k) f_d\|} \right\|  &<\varepsilon,\\
	   \left\|-f_k - \frac{f_d-S_k(\theta_k) f_d}{\|f_d-S_k(\theta_k) f_d\|} \right\| &<\varepsilon, & \left\|-f_k - \frac{f_d-S_k(\theta_k)^{-1} f_d}{\|f_d-S_k(\theta_k) f_d\|} \right\|  &<\varepsilon,\\
	   &&\left\|-f_k - \frac{2f_d-S_k(\theta_k) f_d -S_k(\theta_k)^{-1} f_d}{2\|f_d-S_k(\theta_k) f_d\|} \right\|  &<\varepsilon.
	\end{align*}
    Since $\Gamma$ is dense in $O(d)$, we can now choose for each $k \in \{1,\ldots,d-1\}$ the rotation $\gamma_k \in \Gamma$ suitably close to $R_k(\theta_k)$ so as to satisfy the required properties.
    Similarly, we can choose for $k \in \{1,\ldots,d-1\}$ the isometry $\sigma_k \in \Gamma$ suitably close to $S_k(\theta_k)$ so as to satisfy the required properties also.
    Finally,
    since
    \begin{align*}
	   \left\|f_d - \frac{f_d-\sigma f_d}{\|f_d-\sigma f_d\|} \right\| = \left\|f_d - \frac{f_d-\sigma^{-1} f_d}{\|f_d-\sigma f_d\|} \right\|  =
	   \left\|f_d - \frac{2f_d-\sigma f_d -\sigma f_d}{2\|f_d-\sigma f_d\|} \right\| =0,
	\end{align*}
    we may choose $\sigma_d$ to be suitably close to $\sigma$ to satisfy the required properties.
\end{proof}

\begin{theorem}\label{thm:densegroup}
    Let $G$ be a $(d,0)$-tight multigraph and let $\Gamma \leq O(d)$ be a dense subgroup.
	Then there exists a gain map $\phi : \vec{E} \rightarrow \Gamma$ such that $(G, \phi)$ is $\Gamma$-symmetrically rigid in $\mathbb{R}^d$.
\end{theorem}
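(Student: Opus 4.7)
My plan is to extend the proof strategy of \Cref{t:trans+inv} by replacing translations with rotations, and translation-plus-inversion with reflections, as provided by \Cref{lem:approxisoms}. By \cite[Corollary~3]{ww88}, the $(d,0)$-tight multigraph $G$ admits an edge-disjoint decomposition into $d$ spanning $(1,0)$-tight subgraphs $G_1,\dots,G_d$, each being a disjoint union of unicyclic components; in every such component $G_{i,j}$ I fix a spanning tree $T_{i,j}$ and denote the unique non-tree edge by $e_{i,j}$. For a small parameter $\varepsilon>0$ I apply \Cref{lem:approxisoms} with respect to the standard basis to obtain rotations $\gamma_1,\dots,\gamma_{d-1}$ and reflections $\sigma_1,\dots,\sigma_d$ in $\Gamma$. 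The gain map $\phi$ is then defined by assigning $\gamma_i$ to every edge of each $T_{i,j}$ (for $i<d$), $\sigma_i$ to each $e_{i,j}$ (for $i<d$), and $\sigma_d$ to every edge of $G_d$; the orbit placement is the constant map $p(v)=f_d$.

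The goal is to show that $R(G,\phi,p)$ has rank $d|V|$, which is equivalent to the orbit framework $(G,\phi,p)$ being $\Gamma$-symmetrically rigid since $k(\Gamma)=0$ for any dense subgroup $\Gamma\le O(d)$. After rescaling each row of $R(G,\phi,p)$ by $1/\|f_d-\phi(e)f_d\|$ (or half that, for loops), \Cref{lem:approxisoms} implies that the matrix converges entrywise as $\varepsilon\to 0$ to a matrix $M_0$ which, after reordering columns, is block-diagonal: its $k$-th block is a $|V|\times|V|$ submatrix supported on the $k$-th coordinate and consists precisely of the rescaled rows coming from $G_k$. By continuity of the determinant it suffices to show each block of $M_0$ is invertible. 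For each $k<d$ this mimics \Cref{t:trans+inv}: the tree edges impose $[u(v)]_k=[u(w)]_k$ on each component of $G_k$, and the extra edge $e_{k,j}$ then forces $[u]_k\equiv 0$ on that component.

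The main obstacle is the $d$-th block. Since every element of $O(d)$ preserves $\|f_d\|=1$, the lemma cannot supply a rotation whose action approximates a translation of $f_d$ along $f_d$, and only the sign-flipping conditions $[u(v)]_d+[u(w)]_d=0$ (and $[u(v)]_d=0$ at loops) produced by $\sigma_d$ are available; these admit only the trivial solution exactly when every unicyclic component of $G_d$ contains either a loop or a cycle of odd length. Addressing this requires refining either the decomposition or the placement: a base-exchange argument in the direct sum of $d$ copies of the graphic matroid of $G$ allows edges to be swapped between $G_d$ and other $G_i$ until every component of $G_d$ acquires the required structure, which succeeds whenever $G$ has enough odd cycles or loops; in the residual case where $G$ is bipartite and loopless, the orbit placement is instead perturbed to $p(v)=f_d+\delta(v)f_1$, with $\delta$ chosen so that each row of the $d$-th block picks up a small $f_1$-contribution that breaks the balanced-cycle null vector without spoiling the invertibility of the first $d-1$ blocks. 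Once $M_0$ is shown invertible, continuity of the determinant gives the invertibility of $R(G,\phi,p)$ for all sufficiently small $\varepsilon$, completing the proof.
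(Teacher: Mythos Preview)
Your overall architecture matches the paper's: decompose $G$ into $d$ spanning $(1,0)$-tight subgraphs, use \Cref{lem:approxisoms} to pick group elements, and show that a suitably rescaled orbit rigidity matrix is close to an explicitly invertible matrix $M$. The treatment of the first $d-1$ blocks is fine. The gap is precisely where you locate it, the $d$-th block, but your proposed repairs do not close it.

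With $\sigma_d$ on every edge of $G_d$ and the constant placement $p\equiv f_d$, the limiting $d$-th block is the unsigned edge--vertex incidence matrix of $G_d$, which is singular whenever some unicyclic component of $G_d$ is bipartite. Your base-exchange sketch does not establish that one can always arrange all components of $G_d$ to carry an odd cycle or loop; the phrase ``enough odd cycles or loops'' is not made precise and is not obviously achievable. Your perturbation $p(v)=f_d+\delta(v)f_1$ does not alter the $d$-th diagonal block at all: the $f_1$-contribution lands in the off-diagonal $(d,1)$ block, and (since $\sigma f_1=f_1$) the $d$-th coordinate of each rescaled $G_d$-row is still $1$ at both endpoints. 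Hence $M_0$ remains singular, and a Schur-complement computation shows invertibility of the perturbed matrix would require $A_d-\delta^2 B A_1^{-1}C$ to be nonsingular for suitable off-diagonal blocks $B,C$; you have not verified this, and it is not automatic.

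The paper sidesteps the whole issue with one idea you are missing: for $T_d$ it assigns the \emph{identity} gain $\gamma_d=I$ to tree edges (keeping $\sigma_d$ only on the distinguished cycle edges) and takes the \emph{non-constant} placement $p(v_i)=(1+\delta i)f_d$. A tree edge $(v_i,v_j)$ of $T_d$ then contributes $p(v_i)-p(v_j)=\delta(i-j)f_d$, which after rescaling by $\delta|i-j|$ gives the $(-f_d,f_d)$ difference pattern, while each cycle edge with gain $\sigma_d$ gives (after rescaling) the $(-f_d,-f_d)$ sum pattern. Thus in the limit the $d$-th block forces $[u]_d$ to be constant on each component (tree edges) and then forces that constant to vanish (cycle edge), with no parity hypothesis. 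Replacing your all-$\sigma_d$ assignment on $G_d$ by this identity-plus-$\sigma_d$ scheme, together with the spread-out placement along $f_d$, makes $M$ invertible outright and eliminates the need for base-exchange or ad hoc perturbations.
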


\begin{proof}
	As $G$ is $(d,0)$-tight, it can be decomposed into $d$ edge-disjoint spanning $(1,0)$-tight subgraphs $T_1,\ldots,T_d$ (see \cite[Corollary 3]{ww88}).
	Define $n_1,\ldots,n_d$ to be the number of connected components of $T_1,\ldots,T_d$ respectively,
	and define $T_k^{1}, \ldots, T_k^{n_k}$ to be the connected components of $T_k$.
	For each $T_k^j$,
	fix an edge $e_k^j$ in the unique cycle\footnote{A multigraph is $(1,0)$-tight if and only if every connected component contains exactly one cycle.} in $T_k^j$.
	Label the vertices of $G$ as $v_1,\ldots, v_n$.
	
	Fix a unital basis $f_1,\ldots,f_d$ where $\sigma f_d= -f_d$, and choose any $\varepsilon >0$.
	Let $\gamma_1,\ldots,\gamma_{d-1}$ and $\sigma_1,\ldots,\sigma_{d-1}$ be the rotations and reflections described in \Cref{lem:approxisoms},
 define $\gamma_d$ to be the identity map and define $\sigma_d$ to be sufficiently close to $\sigma$, as demanded in \Cref{lem:approxisoms}.
	We define $\phi_\varepsilon : \vec{E} \rightarrow \Gamma$ to be the gain map where for any edge $(e,v_i,v_j) \in T_k$ with $i\leq j$,
	we have 
	\begin{align*}
	    \phi_\varepsilon(e,v_i,v_j) =
	    \begin{cases}
	        \gamma_k &\text{if } k \in \{1,\ldots,d\} \text{ and } e \notin \{ e_k^1,\ldots, e_k^{n_k}\},\\
	        \sigma_k &\text{if } k \in \{1,\ldots,d\} \text{ and } e \in \{ e_k^1,\ldots, e_k^{n_k}\}.
	    \end{cases}
	\end{align*}
	We note that every loop has gain $\sigma_k$ for some $k \in \{1,\ldots,d\}$.
	
	Define the $|E| \times d|V|$ matrix $M$ with entries,
	where for every vertex $v \in V$ and for every edge $e \in T_k$ with end points $v_i,v_j$ ($i\leq j$),
	\begin{align*}
		M_{e,v}
		:= 
		\begin{cases}
			-f_k &\text{if } v=v_i \text{ and } e \notin \{ e_k^1,\ldots, e_k^{n_k}\}, \\
			f_k  &\text{if } v=v_j \text{ and } e \notin \{ e_k^1,\ldots, e_k^{n_k}\}, \\
			-f_k &\text{if } v\in \{ v_i,v_j\} \text{ and } e \in \{ e_k^1,\ldots, e_k^{n_k}\}, \\
			(0,0,0)  &\text{otherwise}.
		\end{cases}
	\end{align*}
	(This is a slight abuse of notation, as the entries here are $d$-coordinate vectors.)
	By construction it is evident that $\rank M = d|V|$.
	
	Choose $0<\delta<\varepsilon$ and let $p :V \rightarrow \mathbb{R}^d$ be the map where $p(v_i)= (1+ \delta i) f_d$ for all $v_i \in V$.
	Choose a non-looped edge $(e,v_i,v_j)$ ($i<j$) with $\phi_\varepsilon(e,v_i,v_j) = \gamma_k$ for $k \neq d$.
	Then
	\begin{eqnarray*}
	    \bigg\| p(v_i) - \gamma_k p(v_j) + \| f_d- \gamma_k f_d\| f_k \bigg\| &=&
	    \bigg\| (1+ \delta i) f_d-\gamma_k((1+ \delta j) f_d)  + \| f_d- \gamma_k f_d\| f_k \bigg\| \\
	    &=& \bigg\| (f_d-\gamma_k f_d)  + \| f_d- \gamma_k f_d\| f_k + \delta (i f_d- j \gamma_k f_d ) \bigg\|\\
	    &\leq& \| f_d- \gamma_k f_d\|  \left\|-f_k - \frac{f_d-\gamma_k f_d}{\|f_d-\gamma_k f_d\|} \right\| + \delta \left\| i f_d- j \gamma_k f_d \right\| \\
	    &<& \varepsilon \| f_d- \gamma_k f_d\| + \delta \left\| i f_d- j \gamma_k f_d \right\|,
	\end{eqnarray*}
	and hence, we may choose $\delta >0$ sufficiently small so that
	\begin{align*}
	    \left\|\frac{p(v_i) - \gamma_k p(v_j)}{\| f_d- \gamma_k f_d\|} - (-f_k) \right\| < \varepsilon.
	\end{align*}
	By a similar method,
	we can choose a sufficiently small $\delta$ to ensure 
	\begin{align*}
	    \left\|\frac{p(v_j) - \gamma_k^{-1} p(v_i)}{\| f_d- \gamma_k f_d\|} - f_k \right\| < \varepsilon
	\end{align*}
	holds.
	Likewise,
	for any edge $(e,v_i,v_j)$ ($i \leq j$) with $\phi_\varepsilon(e,v_i,v_j) = \sigma_k$ and $k \neq d$,
	we can choose $\delta$ sufficiently small so that
	\begin{align*}
	    \left\|\frac{p(v_i) - \sigma_k p(v_j)}{\| f_d- \sigma_k f_d\|} - (-f_k) \right\| < \varepsilon, \qquad
	    \left\|\frac{p(v_j) - \sigma_k^{-1} p(v_i)}{\| f_d- \sigma_k f_d\|} - (-f_k) \right\| < \varepsilon, \\
	    \left\|\frac{2p(v_i) - \sigma_k p(v_i) - \sigma_k^{-1} p(v_i)}{2\| f_d- \sigma_k f_d\|} - (-f_k) \right\| < \varepsilon.
	\end{align*}
	Finally,
	for any edge $(e,v_i,v_j)$ ($i \leq j$) with $\phi_\varepsilon(e,v_i,v_j) = \sigma_d$,
	we can choose $\delta$ sufficiently small so that
	\begin{align*}
	    \left\|\frac{p(v_i) - \sigma_d p(v_j)}{\| f_d- \sigma_d f_d\|} - f_d \right\| < \varepsilon, \qquad
	    \left\|\frac{p(v_j) - \sigma_d^{-1} p(v_i)}{\| f_d- \sigma_k f_d\|} - f_d \right\| < \varepsilon, \\
	    \left\|\frac{2p(v_i) - \sigma_d p(v_i) - \sigma_d^{-1} p(v_i)}{2\| f_d- \sigma_d f_d\|} - f_d \right\| < \varepsilon.
	\end{align*}
	Define $\tilde{R}(G,\phi_\varepsilon,p)$ to be the matrix formed from $R(G,\phi_\varepsilon,p)$ by multiplying each row corresponding to an edge $(e,v_i,v_j)$ by $\alpha$,
	where (i) $\alpha =\|f_d - \lambda f_d\|$ if it is a non-loop edge with gain $\lambda \notin \{ I, \sigma_d,\sigma_d^{-1}\}$,
	(ii) $\alpha =2\|f_d - \lambda f_d\|$ if it is a loop with gain $\lambda \neq \sigma_d$,
	(iii) $\alpha =-\|f_d - \sigma_d f_d\|$ if it is a non-loop edge with gain $\sigma_d$ or $\sigma_d^{-1}$,
	(iv) $\alpha =-2\|f_d - \sigma_d f_d\|$ if it is a loop with gain $\sigma_d$,
	and 
	(v) $\alpha = \delta|i-j|$ if it is an edge $(e,v_i,v_j)$ with trivial gain.
	By our choice of entries,
	every entry of $\tilde{R}(G,\phi_\varepsilon,p)$ lies within $\varepsilon$ of the corresponding entry in the matrix $M$.
	Since $M$ is non-singular, we must have for sufficiently small $\varepsilon$ that
	\begin{align*}
	    \rank R(G,\phi_\varepsilon,p) = \rank \tilde{R}(G,\phi_\varepsilon,p) = \rank M = d|V|.
	\end{align*}
	Now fix $\phi = \phi_\varepsilon$ for some $\varepsilon >0$ where the above holds.
	Then $(G,\phi,p)$ is $\Gamma$-symmetrically rigid as required.
\end{proof}

\begin{example}
    Define the rotations
    \begin{align*}
        \rho_1=
        \frac{1}{5}
        \begin{pmatrix} 3 & 4 & 0\\-4 & 3 & 0\\ 0 & 0 & 5 \end{pmatrix},
        \qquad
        \rho_2=
        \frac{1}{5}
        \begin{pmatrix} 5 & 0 & 0 \\0 & 3 & 4\\0 & -4 & 3 \end{pmatrix}.
    \end{align*}
    The element $\rho_1$ defines an irrational rotation in the $xy$-plane,
    while the element $\rho_2$ defines an irrational rotation in the $yz$-plane.
    The group $\langle \rho_1,\rho_2 \rangle$ is isomorphic to $F_2$ (see \cite{Tao04}) and dense in $SO(3)$;
    the latter property can be seen from observing that the closure of $\langle \rho_1 \rangle$ contains every rotation in the $xy$-plane and the closure of $\langle \rho_2 \rangle$ contains every rotation in the $yz$-plane.
    We can now apply \Cref{thm:densegroup} to the group $\Gamma := \langle \rho_1,\rho_2,\sigma \rangle$ for any choice of reflection $\sigma$.
    Note that any finitely generated group $F$ that is dense in $SO(3)$ contains a dense free subgroup of rank~$2$~\cite{BreGe03}. 
\end{example}

\subsection{Large finite point groups}

Let $A$ be a subset of a metric space $(M,d)$.
For a given $\varepsilon >0$, we say that $A$ is an \emph{$\varepsilon$-dense} subset of $M$ if for every $x \in M$ there exists $a \in A$ so that $d(a,x) <\varepsilon$.
By observing the proofs of \Cref{lem:approxisoms} and \Cref{thm:densegroup},
we can easily see that the following generalisation is also true.

\begin{theorem}\label{lem:epsilondensegroup}
    Let $G$ be a $(d,0)$-tight multigraph.
	Then there exists $\varepsilon>0$ so that the following holds for any $\varepsilon$-dense subgroup $\Gamma \leq O(d)$;
	there exists a gain map $\phi : \vec{E} \rightarrow \Gamma$ such that $(G, \phi)$ is $\Gamma$-symmetrically rigid in $\mathbb{R}^d$.
\end{theorem}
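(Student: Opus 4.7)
The plan is to mimic the proof of \Cref{thm:densegroup} almost verbatim, carefully tracking the single place where the density of $\Gamma$ is actually used. The combinatorial setup is unchanged: decompose $G$ into $d$ edge-disjoint spanning $(1,0)$-tight subgraphs $T_1,\ldots,T_d$, label components and cycle edges, fix an orthonormal basis $f_1,\ldots,f_d$, and build the same $|E| \times d|V|$ matrix $M$ from this combinatorial data. By construction, $\rank M = d|V|$, and this is completely independent of $\Gamma$.

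The perturbation argument from \Cref{thm:densegroup} then yields a threshold $\varepsilon_0>0$, depending only on $M$ (hence only on $G$ and our fixed basis), such that any matrix whose entries lie within $\varepsilon_0$ of the corresponding entries of $M$ still has rank $d|V|$. Working backwards through the calculation that bounds $\|\tilde R(G,\phi,p) - M\|$ in terms of (i) the approximation error of the chosen gains relative to the ideal rotations $R_k(\theta_k)$ and reflections $S_k(\theta_k)$ appearing in the proof of \Cref{lem:approxisoms}, and (ii) the placement parameter $\delta$, I would first fix the small angles $\theta_k$ and the placement parameter $\delta$ so as to consume part of this error budget. This leaves a tolerance $\varepsilon_1>0$, again depending only on $G$ and the basis, measuring how close the chosen gains $\gamma_k,\sigma_k$ must lie (in operator norm) to the target elements $R_k(\theta_k), S_k(\theta_k) \in O(d)$, and how close $\sigma_d$ must lie to the reflection $\sigma$ with $\sigma f_d = -f_d$.

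Setting $\varepsilon := \varepsilon_1$, any $\varepsilon$-dense subgroup $\Gamma \leq O(d)$ contains, by definition, elements $\gamma_k, \sigma_k, \sigma_d \in \Gamma$ within $\varepsilon$ of the corresponding targets $R_k(\theta_k)$, $S_k(\theta_k)$, $\sigma$ (note that $\gamma_d$ is the identity and so requires no approximation). We then define the gain map $\phi$ exactly as in \Cref{thm:densegroup}, assigning $\gamma_k$ to tree edges of $T_k$ and $\sigma_k$ to the distinguished cycle edges $e_k^j$. Continuity of the expressions appearing in \Cref{lem:approxisoms} then guarantees that all the approximate identities in that lemma hold to within a controllable error, and the perturbation argument yields $\rank R(G,\phi,p) = d|V|$, establishing $\Gamma$-symmetric rigidity.

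The main technical point, and the only real thing to verify, is that the $\varepsilon$ produced is genuinely independent of $\Gamma$. Because every constant in the chain $\theta_k \to \delta \to \varepsilon_0 \to \varepsilon_1$ is chosen purely from $M$ and the fixed orthonormal basis, this independence is automatic. A minor point is that the chosen group elements must also form a valid gain map (for example, loops must have non-trivial gains and parallel edges distinct gains); taking $\varepsilon$ smaller than the distances between the ideal targets and the identity, and between the ideal targets themselves, makes this automatic since all relevant target elements are distinct non-identity isometries.
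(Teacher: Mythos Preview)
Your proposal is correct and follows essentially the same approach as the paper: build the matrix $M$ from the combinatorial decomposition of $G$ (which is independent of $\Gamma$), extract a rank-stability threshold from $M$, and then use $\varepsilon$-density of $\Gamma$ in place of full density to select the approximating elements $\gamma_k,\sigma_k,\sigma_d$ needed to rerun the perturbation argument of \Cref{thm:densegroup}. The paper's proof is in fact terser than yours---it simply invokes the existence of a rank-preserving $\varepsilon$ for $M$ and then appeals directly to \Cref{lem:approxisoms} and the proof of \Cref{thm:densegroup}---so your extra bookkeeping on the order of choices $\theta_k \to \delta \to \varepsilon_0 \to \varepsilon_1$ and the gain-map validity remark are refinements rather than departures.
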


\begin{proof}
Define the matrix $M$ as in \Cref{thm:densegroup}.
Since the matrix $M$ is non-singular, there exists an $\varepsilon>0$, such that for every matrix $R$ that satisfies $\|R-M\|\leq \varepsilon$, we have $\operatorname{rank}R=\operatorname{rank}M$.
Let $\Gamma$ be an $\varepsilon$-dense subgroup in $O(d)$. Then there exist rotations $\gamma_1,\ldots,\gamma_{d-1} \in \Gamma$ and isometries $\sigma_1,\ldots,\sigma_{d-1}, \sigma_d\in \Gamma$, satisfying the properties of \Cref{lem:approxisoms}. Hence, by repeating the proof of  \Cref{thm:densegroup}, the result follows. 
\end{proof}




Given a $(d,0)$-tight multigraph, it would be interesting to quantify how large a point group $\Gamma$, where $k(\Gamma)=0$, needs to be for there to exist a rigid gain map.

\section[Probabilities for rigid gain assignments]{Probabilities for rigid gain assignments}
\label{sec:prob}

Let $G$ be a multigraph and let $\Gamma \leq O(d)$ be a finite group.
We say that a $\Gamma$-gain map $\phi$ of $G$ is a \emph{rigid gain assignment} of $G$ if $(G,\phi)$ is $\Gamma$-symmetrically rigid.
We define $\mathbb{P}(G,\Gamma) \in [0,1)$ to be the probability that a random gain map $\phi:\vec{E} \rightarrow \Gamma$ is a rigid gain assignments.
If $\Gamma$ is the identity group then $\mathbb{P}(G,\Gamma) \in \{0,1\}$ since there is exactly one gain map. Suppose $\Gamma$ is non-trivial. Note that for any simple graph $G$ we must have $\mathbb{P}(G,\Gamma)<1$ as the gain map $\phi$ that assigns every edge trivial gain forces the gain graph $(G,\phi)$ to be $\Gamma$-symmetrically flexible.

For a group $\Gamma$ containing just a few elements, the probability of choosing gains which result in a $\Gamma$-symmetrically rigid framework can be experimentally checked using a computer algebra system. Due to the number of possible gain maps, this method, however, gets infeasible even for moderate numbers of vertices.
While we show later that the relative number of $\Gamma$-symmetrically rigid choices of gains can be arbitrarily low, we see that for a small number of vertices, and for two basic groups $\Gamma$, we get probabilities not lower than $0.96$ for rigidity of a random choice.

\Cref{fig:dim2rot} shows some multigraphs with different probabilities for gains to be $\Gamma$-symmetrically rigid with respect to the group of 90 degree rotations in the plane. We were able to compute the probability for all possible multigraphs with at most five vertices. To get suitable multigraphs we need to check all multigraphs with the respective number of vertices for $(2,1)$-tightness and then check all gains for $\Gamma$-symmetric rigidity. On 5 vertices there are $3765$ $(2,1)$-tight multigraphs and up to $4^9$ possible gain maps. However, already on six vertices the numbers seem out of computational reach: there are $281384$ non-isomorphic $(2,1)$-tight multigraphs, and for each of these multigraphs there are many possible gain maps, with exactly $4^{11}$ (over 4 million) possibilities in the worst case\footnote{The total number of gain maps depends on the number of multiple edges, with $4^{11}$ achieved for any simple graph.}.

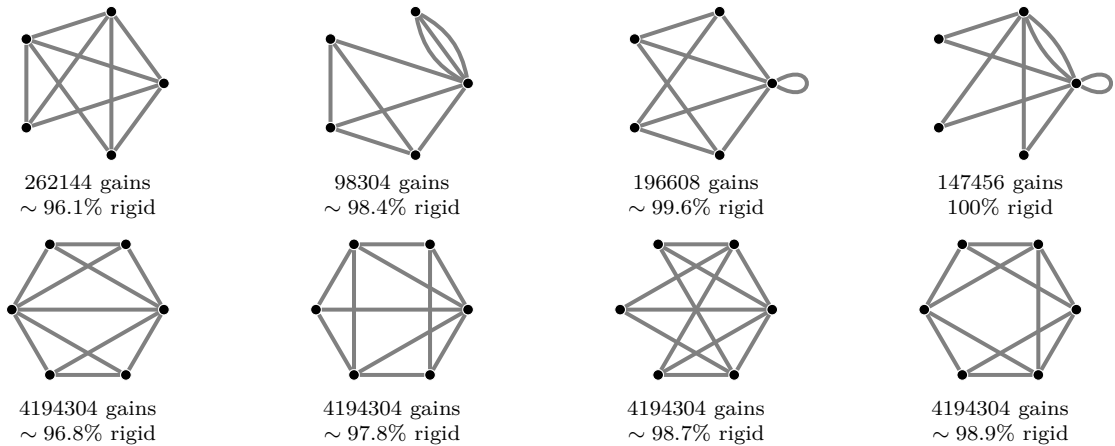
\begin{figure}[ht]
    \centering
    \begin{tikzpicture}[every loop/.style={min distance=6mm,looseness=5}]
        \begin{scope}
            \node[vertex] (1) at (0:1) {};
            \node[vertex] (2) at (72:1) {};
            \node[vertex] (3) at (144:1) {};
            \node[vertex] (4) at (216:1) {};
            \node[vertex] (5) at (288:1) {};
            \draw[edge] (1)to(2);
            \draw[edge] (1)to(3);
            \draw[edge] (1)to(4);
            \draw[edge] (1)to(5);
            \draw[edge] (2)to(3);
            \draw[edge] (2)to(4);
            \draw[edge] (2)to(5);
            \draw[edge] (3)to(4);
            \draw[edge] (3)to(5);
            \node[labelsty,align=center] at (0,-1.5) {262144 gains\\$\sim96.1\%$ rigid};
        \end{scope}
        \begin{scope}[xshift=4cm]
            \node[vertex] (1) at (0:1) {};
            \node[vertex] (2) at (72:1) {};
            \node[vertex] (3) at (144:1) {};
            \node[vertex] (4) at (216:1) {};
            \node[vertex] (5) at (288:1) {};
            \draw[edge] (1)to[bend right=25] (2);
            \draw[edge] (1)to(2);
            \draw[edge] (1)to[bend right=-25] (2);
            \draw[edge] (1)to(3);
            \draw[edge] (1)to(4);
            \draw[edge] (1)to(5);
            \draw[edge] (3)to(4);
            \draw[edge] (3)to(5);
            \draw[edge] (4)to(5);
            \node[labelsty,align=center] at (0,-1.5) {98304 gains\\$\sim98.4\%$ rigid};
        \end{scope}
        \begin{scope}[xshift=8cm]
            \node[vertex] (1) at (0:1) {};
            \node[vertex] (2) at (72:1) {};
            \node[vertex] (3) at (144:1) {};
            \node[vertex] (4) at (216:1) {};
            \node[vertex] (5) at (288:1) {};
            \draw[edge] (1)to(2);
            \draw[edge] (1)to(3);
            \draw[edge] (1)to(4);
            \draw[edge] (1)to(5);
            \draw[edge] (2)to(3);
            \draw[edge] (2)to(4);
            \draw[edge] (3)to(5);
            \draw[edge] (4)to(5);
            \draw[edge] (1)to[in=-30,out=30,loop] (1);
            \node[labelsty,align=center] at (0,-1.5) {196608 gains\\$\sim99.6\%$ rigid};
        \end{scope}
        \begin{scope}[xshift=12cm]
            \node[vertex] (1) at (0:1) {};
            \node[vertex] (2) at (72:1) {};
            \node[vertex] (3) at (144:1) {};
            \node[vertex] (4) at (216:1) {};
            \node[vertex] (5) at (288:1) {};
            \draw[edge] (1)to[bend right=15] (2);
            \draw[edge] (1)to[bend right=-15] (2);
            \draw[edge] (1)to(3);
            \draw[edge] (1)to(4);
            \draw[edge] (1)to(5);
            \draw[edge] (2)to(3);
            \draw[edge] (2)to(4);
            \draw[edge] (2)to(5);
            \draw[edge] (1)to[in=-30,out=30,loop] (1);
            \node[labelsty,align=center] at (0,-1.5) {147456 gains\\$100\%$ rigid};
        \end{scope}
        \begin{scope}[yshift=-3cm]
            \begin{scope}
                \node[vertex] (1) at (0:1) {};
                \node[vertex] (2) at (60:1) {};
                \node[vertex] (3) at (120:1) {};
                \node[vertex] (4) at (180:1) {};
                \node[vertex] (5) at (240:1) {};
                \node[vertex] (6) at (300:1) {};
                \draw[edge] (1)edge(2) (1)edge(3) (1)edge(4) (1)edge(5) (1)edge(6);
                \draw[edge] (4)edge(2) (4)edge(3) (4)edge(5) (4)edge(6);
                \draw[edge] (2)edge(3) (5)edge(6);
                \node[labelsty,align=center] at (0,-1.5) {4194304 gains\\$\sim96.8\%$ rigid};
            \end{scope}
            \begin{scope}[xshift=4cm]
                \node[vertex] (1) at (0:1) {};
                \node[vertex] (5) at (60:1) {};
                \node[vertex] (2) at (120:1) {};
                \node[vertex] (4) at (180:1) {};
                \node[vertex] (3) at (240:1) {};
                \node[vertex] (6) at (300:1) {};
                \draw[edge] (1)edge(2) (1)edge(3) (1)edge(4) (1)edge(5) (1)edge(6) (2)edge(3) (2)edge(4) (2)edge(5) (3)edge(4) (3)edge(6) (5)edge(6);
                \node[labelsty,align=center] at (0,-1.5) {4194304 gains\\$\sim97.8\%$ rigid};
            \end{scope}
            \begin{scope}[xshift=8cm]
                \node[vertex] (1) at (0:1) {};
                \node[vertex] (2) at (60:1) {};
                \node[vertex] (3) at (120:1) {};
                \node[vertex] (4) at (180:1) {};
                \node[vertex] (5) at (240:1) {};
                \node[vertex] (6) at (300:1) {};
                \draw[edge] (1)edge(2) (1)edge(3) (1)edge(4) (1)edge(5) (1)edge(6) (2)edge(3) (2)edge(4) (2)edge(5) (3)edge(6) (4)edge(6) (5)edge(6);
                \node[labelsty,align=center] at (0,-1.5) {4194304 gains\\$\sim98.7\%$ rigid};
            \end{scope}
             \begin{scope}[xshift=12cm]
                \node[vertex] (3) at (0:1) {};
                \node[vertex] (2) at (60:1) {};
                \node[vertex] (6) at (120:1) {};
                \node[vertex] (4) at (180:1) {};
                \node[vertex] (5) at (240:1) {};
                \node[vertex] (1) at (300:1) {};
                \draw[edge] (1)edge(2) (1)edge(3) (1)edge(4) (1)edge(5) (2)edge(3) (2)edge(4) (2)edge(6) (3)edge(5) (3)edge(6) (4)edge(5) (4)edge(6);
                \node[labelsty,align=center] at (0,-1.5) {4194304 gains\\$\sim98.9\%$ rigid};
            \end{scope}
        \end{scope}
    \end{tikzpicture}
    \caption{The percentage of gain maps that are rigid gain assignments with respect to the group of 90 degree rotations in $\mathbb{R}^2$, for a selection of $(2,1)$-tight multigraphs.}
    \label{fig:dim2rot}
\end{figure}

\Cref{fig:dim3rot} shows a similar analysis for 180 degree rotations in three-dimensional space. Here the underlying multigraph should be $(3,0)$-tight. We were able to compute all probabilities for multigraphs with at most four vertices. While there are only 3440 non-isomorphic $(3,0)$-tight multigraphs on four vertices, in the worst case there are $4^{12}$ (over 16 million) possible gain maps. Across all of the multigraphs we computed, there are no multigraphs with less than $96\%$ rigid gain assignments.

In general such computations are mainly time constrained by the shear number of multigraphs and possible gain maps. Single multigraphs with more vertices could be computed given computational time. However, computing all appropriate multigraphs and checking all possible gain maps for $\Gamma$-symmetric rigidity soon gets infeasible.

\begin{figure}[ht]
    \centering
    \begin{tikzpicture}[every loop/.style={min distance=6mm,looseness=5}]
        \begin{scope}
            \node[vertex] (1) at (-30:1) {};
            \node[vertex] (2) at (90:1) {};
            \node[vertex] (3) at (210:1) {};
            \draw[edge] (1)to[bend right=15] (2);
            \draw[edge] (1)to[bend right=-15] (2);
            \draw[edge] (1)to[bend right=15] (3);
            \draw[edge] (1)to[bend right=-15] (3);
            \draw[edge] (2)to[bend right=15] (3);
            \draw[edge] (2)to[bend right=-15] (3);
            \draw[edge] (1)to[in=0,out=-60,loop] (1);
            \draw[edge] (2)to[in=60,out=120,loop] (2);
            \draw[edge] (3)to[in=180,out=240,loop] (3);
            \node[labelsty,align=center] at (0,-1.5) {46656 gains\\$\sim96.1\%$ rigid};
        \end{scope}
        \begin{scope}[xshift=4cm]
            \node[vertex] (1) at (-30:1) {};
            \node[vertex] (2) at (90:1) {};
            \node[vertex] (3) at (210:1) {};
            \draw[edge] (1)to[bend right=15] (2);
            \draw[edge] (1)to[bend right=-15] (2);
            \draw[edge] (1)to(2);
            \draw[edge] (1)to(3);
            \draw[edge] (2)to[bend right=15] (3);
            \draw[edge] (2)to[bend right=5] (3);
            \draw[edge] (2)to[bend right=-5] (3);
            \draw[edge] (2)to[bend right=-15] (3);
            \draw[edge] (2)to[in=60,out=120,loop] (2);
            \node[labelsty,align=center] at (0,-1.5) {6912 gains\\$100\%$ rigid};
        \end{scope}
        \begin{scope}[xshift=8cm]
            \node[vertex] (1) at (-30:1) {};
            \node[vertex] (2) at (90:1) {};
            \node[vertex] (3) at (210:1) {};
            \draw[edge] (1)to[bend right=15] (2);
            \draw[edge] (1)to[bend right=-15] (2);
            \draw[edge] (1)to(2);
            \draw[edge] (1)to[bend right=15] (3);
            \draw[edge] (1)to[bend right=-15] (3);
            \draw[edge] (2)to(3);
            \draw[edge] (2)to[bend right=15] (3);
            \draw[edge] (2)to[bend right=-15] (3);
            \draw[edge] (2)to[in=60,out=120,loop] (2);
            \node[labelsty,align=center] at (0,-1.5) {20736 gains\\$100\%$ rigid};
        \end{scope}
    \end{tikzpicture}
    \begin{tikzpicture}[every loop/.style={min distance=6mm,looseness=5}]
        \begin{scope}[xshift=0cm]
            \node[vertex] (1) at (-45:1) {};
            \node[vertex] (2) at (45:1) {};
            \node[vertex] (3) at (135:1) {};
            \node[vertex] (4) at (-135:1) {};
            \draw[edge] (1)to[bend right=15] (2);
            \draw[edge] (1)to[bend right=-15] (2);
            \draw[edge] (1)to[bend right=15] (3);
            \draw[edge] (1)to[bend right=-15] (3);
            \draw[edge] (1)to[] (4);
            \draw[edge] (2)to[bend right=15] (3);
            \draw[edge] (2)to[bend right=-15] (3);
            \draw[edge] (2)to[bend right=15] (4);
            \draw[edge] (2)to[bend right=-15] (4);
            \draw[edge] (3)to[] (4);
            \draw[edge] (2)to[in=15,out=75,loop] (2);
            \draw[edge] (3)to[in=105,out=165,loop] (3);
            \node[labelsty,align=center] at (0,-1.5) {2985984 gains\\$\sim98.4\%$ rigid};
        \end{scope}
        \begin{scope}[xshift=4cm]
            \node[vertex] (1) at (-45:1) {};
            \node[vertex] (2) at (45:1) {};
            \node[vertex] (3) at (135:1) {};
            \node[vertex] (4) at (-135:1) {};
            \draw[edge] (1)to[bend right=15] (2);
            \draw[edge] (1)to[bend right=-15] (2);
            \draw[edge] (1)to[bend right=15] (4);
            \draw[edge] (1)to[bend right=-15] (4);
            \draw[edge] (2)to[bend right=15] (3);
            \draw[edge] (2)to[bend right=-15] (3);
            \draw[edge] (3)to[bend right=15] (4);
            \draw[edge] (3)to[bend right=-15] (4);
            \draw[edge] (2)to[in=15,out=75,loop] (2);
            \draw[edge] (3)to[in=60,out=120,loop] (3);
            \draw[edge] (3)to[in=150,out=210,loop] (3);
            \draw[edge] (4)to[in=-105,out=-165,loop] (4);
            \node[labelsty,align=center] at (0,-1.5) {1119744 gains\\$\sim99.8\%$ rigid};
        \end{scope}
        \begin{scope}[xshift=8cm]
            \node[vertex] (1) at (-45:1) {};
            \node[vertex] (2) at (45:1) {};
            \node[vertex] (3) at (135:1) {};
            \node[vertex] (4) at (-135:1) {};
            \draw[edge] (1)to[] (2);
            \draw[edge] (1)to[bend right=15] (2);
            \draw[edge] (1)to[bend right=-15] (2);
            \draw[edge] (1)to[] (3);
            \draw[edge] (2)to[] (3);
            \draw[edge] (2)to[bend right=-15] (4);
            \draw[edge] (2)to[] (4);
            \draw[edge] (2)to[bend right=15] (4);
            \draw[edge] (3)to[in=105,out=165,loop] (3);
            \draw[edge] (3)to[in=45,out=105,loop] (3);
            \draw[edge] (3)to[in=165,out=225,loop] (3);
            \node[labelsty,align=center] at (0,-1.5) {55296 gains\\$100\%$ rigid};
        \end{scope}
        \begin{scope}[xshift=12cm]
            \node[vertex] (1) at (-45:1) {};
            \node[vertex] (2) at (45:1) {};
            \node[vertex] (3) at (135:1) {};
            \node[vertex] (4) at (-135:1) {};
            \draw[edge] (1)to[bend right=15] (2);
            \draw[edge] (1)to[bend right=-15] (2);
            \draw[edge] (1)to[] (3);
            \draw[edge] (1)to[] (4);
            \draw[edge] (2)to(3);
            \draw[edge] (2)to[bend right=15] (3);
            \draw[edge] (2)to[bend right=-15] (3);
            \draw[edge] (2)to(4);
            \draw[edge] (3)to(4);
            \draw[edge] (2)to[in=30,out=-30,loop] (2);
            \draw[edge] (2)to[in=60,out=120,loop] (2);
            \draw[edge] (3)to[in=105,out=165,loop] (3);
            \node[labelsty,align=center] at (0,-1.5) {1327104 gains\\$100\%$ rigid};
        \end{scope}
    \end{tikzpicture}
    \caption{The percentage of gain maps that are rigid gain assignments with respect to the group of 180 degree rotations around the three axes in 
    $\mathbb{R}^3$, for a selection of $(3,0)$-tight multigraphs.}
    \label{fig:dim3rot}
\end{figure}
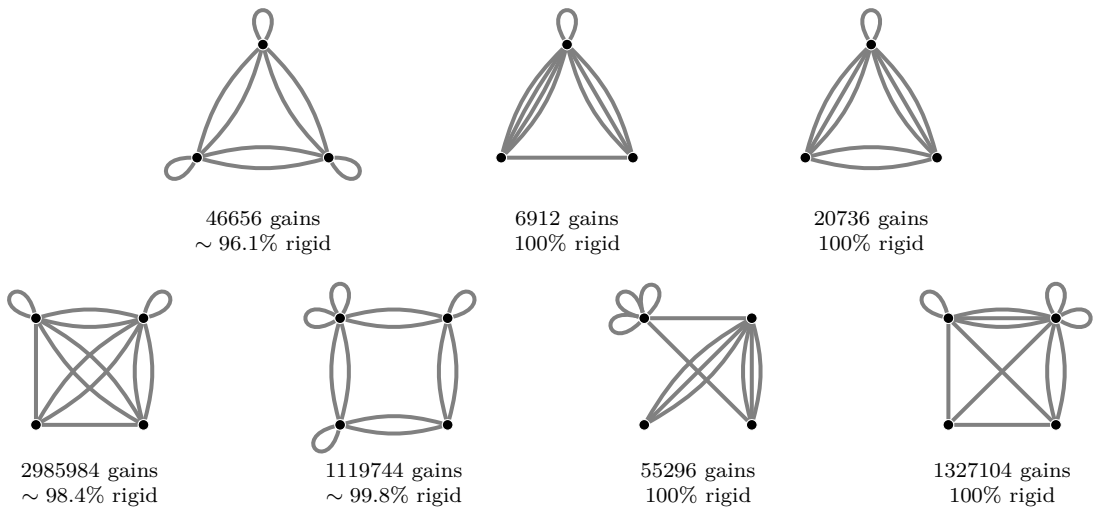

In spite of these computational results, it is actually possible to construct multigraphs, for any finite symmetry group $\Gamma \leq O(d)$, for which the probability of choosing a $\Gamma$-symmetrically rigid gain assignment is positive and close to zero. We prove this below. To do that we first consider the effect of extension operations on this probability.

\subsection{Probabilities under extension operations}

We begin with the $d$-dimensional analogue of the 0-extension introduced in \Cref{sec:plane-1iso} which produces no parallel edges.

\begin{proposition}\label{p:0extprob}
    Let $\Gamma \leq O(d)$ be a finite group and $G=(V,E)$ be a multigraph with at least $d$ vertices.
    Let $G'=(V',E')$ be a multigraph formed from $G$ by adding a vertex and connecting it to $d$ distinct vertices.
    Then $\mathbb{P}(G,\Gamma) = \mathbb{P}(G',\Gamma)$.
\end{proposition}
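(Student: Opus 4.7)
The plan is to prove the proposition by setting up a bijective correspondence between gain maps of $G$ and $|\Gamma|^d$-sized fibres of extensions to $G'$, and then showing that rigidity transfers across each fibre.

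First I would count extensions. Write $v_0$ for the new vertex and $v_1, \ldots, v_d \in V$ for its (distinct) neighbours in $G'$, with corresponding new edges $e_1, \ldots, e_d$. Since $v_0$ did not exist in $G$ and the $v_i$ are distinct, none of $e_1, \ldots, e_d$ is a loop, parallel to another $e_j$, or parallel to an edge of $G$. Therefore the gain-map axioms impose no constraint on the new gains, and every gain map $\phi : \vec{E} \to \Gamma$ lifts to exactly $|\Gamma|^d$ gain maps $\phi' : \vec{E'} \to \Gamma$ (one for each choice of $(\gamma_1, \ldots, \gamma_d) \in \Gamma^d$ with $\gamma_i = \phi'(e_i, v_0, v_i)$). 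So it suffices to prove that for every such extension $\phi'$, the gain graph $(G', \phi')$ is $\Gamma$-symmetrically rigid if and only if $(G, \phi)$ is. If this holds, the count of rigid gain maps scales by $|\Gamma|^d$ alongside the total number of gain maps, and the probabilities coincide.

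For the rigidity equivalence I would work at the level of the orbit rigidity matrix. Pick a regular orbit placement $p$ of $(G, \phi)$ whose image affinely spans $\mathbb{R}^d$, and extend to $p'$ on $V' = V \cup \{v_0\}$ by placing $p'(v_0)$ at a generic point. Ordering the edges of $G'$ with the new edges last and the columns so that $v_0$ comes last, the orbit rigidity matrix has the block form
\[
R(G', \phi', p') \;=\; \begin{pmatrix} R(G, \phi, p) & 0 \\ A & B \end{pmatrix},
\]
where $B$ is the $d \times d$ matrix whose $i$-th row is $p'(v_0) - \gamma_i p(v_i)$. Subtracting row $1$ from rows $2, \ldots, d$ shows that $\det B$ is a linear function of $p'(v_0)$ whose leading coefficient is, up to sign, the determinant of the $(d{-}1) \times d$ matrix with rows $\gamma_1 p(v_1) - \gamma_i p(v_i)$ for $i \geq 2$. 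Because $v_1, \ldots, v_d$ are distinct, for a generic regular $p$ these $d-1$ vectors are linearly independent, so $\det B$ is not identically zero in $p'(v_0)$. Hence $B$ is invertible for generic $p'(v_0)$, regardless of which $\gamma_i$ were chosen.

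With $B$ invertible, any $u' \in \ker R(G', \phi', p')$ satisfies $R(G,\phi,p) \, u'|_V = 0$ and $u'(v_0) = -B^{-1} A \, u'|_V$, so $u' \mapsto u'|_V$ is a linear isomorphism $\ker R(G', \phi', p') \cong \ker R(G, \phi, p)$. Equivalently,
\[
\operatorname{rank} R(G', \phi', p') \;=\; \operatorname{rank} R(G, \phi, p) + d.
\]
Since $p$ and $p'$ both have affinely spanning images (as $|V| \geq d$ and $p'(v_0)$ is generic), the trivial flex space has dimension $k(\Gamma)$ in both frameworks, so the nullity of $R(G, \phi, p)$ equals $k(\Gamma)$ if and only if the nullity of $R(G', \phi', p')$ does, giving the desired equivalence. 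The main obstacle is the genericity argument that $B$ is invertible independently of the choice of extension gains; this is where the hypothesis that the $v_i$ are distinct is essential, as it prevents the relevant determinant polynomial from vanishing identically.
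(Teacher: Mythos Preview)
Your approach is essentially the same as the paper's: both build the block-triangular orbit rigidity matrix, verify the $d\times d$ block $B$ is invertible, and conclude the rank jumps by exactly $d$. The paper makes the cleaner choice $p'(v_0)=0$, so that the rows of $B$ are simply $-\gamma_i p(v_i)$ and invertibility reduces to the single condition that $\{\gamma_1 p(v_1),\ldots,\gamma_d p(v_d)\}$ be linearly independent for every tuple $(\gamma_1,\ldots,\gamma_d)\in\Gamma^d$; finiteness of $\Gamma$ then lets one choose a single regular $p$ satisfying this for all tuples at once. Your route via a generic $p'(v_0)$ and affine independence of the $\gamma_i p(v_i)$ reaches the same conclusion with slightly more work (and note your phrase ``determinant of the $(d-1)\times d$ matrix'' should be ``the $(d-1)\times(d-1)$ minors'' or ``full row rank'').

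There is one small but genuine gap. You conclude by saying the nullity of $R(G',\phi',p')$ equals $k(\Gamma)$ if and only if the nullity of $R(G,\phi,p)$ does, and call this ``the desired equivalence''. But the equivalence you actually need is at the level of gain graphs, not of the particular placements: $(G',\phi')$ is $\Gamma$-symmetrically rigid if and only if its \emph{regular} placements have nullity $k(\Gamma)$. You chose $p$ regular for $(G,\phi)$, but you never checked that $p'$ is regular for $(G',\phi')$. Without this, the backward implication fails: if $(G,\phi)$ is flexible you only get that $(G',\phi',p')$ is flexible, which does not by itself rule out some other rigid placement of $(G',\phi')$. The fix is short: for any placement $q'$ of $(G',\phi')$ with restriction $q=q'|_V$, the block shape gives $\rank R(G',\phi',q')\leq \rank R(G,\phi,q)+d\leq \rank R(G,\phi,p)+d=\rank R(G',\phi',p')$, so $p'$ is indeed regular. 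The paper states this explicitly (``due to the layout of the orbit rigidity matrix for $(G',\phi',p')$, the orbit framework $(G',\phi',p')$ is regular'').
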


\begin{proof}
    Fix a gain map $\phi$ of $G$ and let $p$ be a regular orbit placement of $(G,\phi)$.
    Since the group $\Gamma$ is finite,
    we choose $p$ so that for any $d$ vertices $w_1,\ldots,w_d \in V$ and any $d$ gains $\mu_1,\ldots,\mu_d \in \Gamma$,
    the set $\{ \mu_1 p(w_1),\ldots, \mu_d p(w_d)\}$ is linearly independent.
    Let $v_0$ be the vertex added to $G$ to $G'$ that has neighbours $v_1,\ldots,v_d \in V$.
    Now choose any gain map $\phi'$ of $G'$ so that $\phi'(e,v,w) = \phi(e,v,w)$ for all edges $(e,v,w) \in \vec{E}$,
    and define $p'$ to be the orbit placement of $G'$ with $p'(v_0) = 0$ and $p'(v) = p(v)$ for all $v \in V$.
    For each $i \in \{1,\ldots,d\}$,
    fix $\gamma_i$ to be the gain of the edge $e_i$ from $v_0$ to $v_1$ in $G'$,
    i.e., $\gamma_i := \phi'(e_i,v_0,v_i)$.
    We now note that the orbit rigidity matrix of $(G',\phi',p')$ is of the form
    \begin{align*}
        \begin{bmatrix}
            R(G,\phi,p) & \mathbf{0}_{|E| \times d} \\
            A & B 
        \end{bmatrix},
    \end{align*}
    where $\mathbf{0}_{|E| \times d}$ is the $|E| \times d$ zero matrix,
    $A$ is a $d \times d|V|$ matrix and $B$ is the $d \times d$ square matrix
    \begin{align*}
        \begin{bmatrix}
            - \gamma_1 p(v_1) \\
            \vdots \\
            - \gamma_d p(v_d)
        \end{bmatrix}.
    \end{align*}
    By our choice of $p$,
    the matrix $B$ has independent rows,
    hence $R(G',\phi',p') = R(G,\phi,p) + d$.
    We note that due to the layout of the orbit rigidity matrix for $(G',\phi',p')$,
    the orbit framework $(G',\phi',p')$ is regular.
    It now follows that $(G',\phi')$ is $\Gamma$-symmetrically rigid if and only if $(G,\phi)$ is $\Gamma$-symmetrically rigid.
    
    For every gain map $\phi$ of $G$,
    there exist exactly $|\Gamma|^d$ gain maps $\phi'$ of $G$ that agree with $\phi$ on the edges of $G$,
    and, as we have just proven,
    each gain graph $(G',\phi')$ is $\Gamma$-symmetrically rigid if and only if $(G,\phi)$ is.
    Hence $\mathbb{P}(G,\Gamma) = \mathbb{P}(G',\Gamma)$.
\end{proof}

It is important in \Cref{p:0extprob} that the $d$ vertices are distinct.
For example,
let $\Gamma$ be the symmetry group of the 4-dimensional hypercube; i.e., the group of all matrices with exactly one non-zero entry per row and column, which can be either $1$ or $-1$.
Let $G$ be a multigraph with $0 < \mathbb{P}(G,\Gamma)<1$ and let $G'$ be a multigraph formed from $G$ by adding a new vertex $v_0$ and connecting to one vertex $v_1$ of $G$ by four parallel edges $e_1,e_2,e_3,e_4$.
Fix the elements
\begin{align*}
    \gamma_1:=
    \begin{bmatrix}
        1 & 0 & 0 & 0 \\
        0 & 1 & 0 & 0 \\
        0 & 0 & 1 & 0 \\
        0 & 0 & 0 & 1
    \end{bmatrix},
    \,
    \gamma_2:=
    \begin{bmatrix}
        -1 & 0 & 0 & 0 \\
        0 & 1 & 0 & 0 \\
        0 & 0 & 1 & 0 \\
        0 & 0 & 0 & 1
    \end{bmatrix},
    \,
    \gamma_3:=
    \begin{bmatrix}
        1 & 0 & 0 & 0 \\
        0 & -1 & 0 & 0 \\
        0 & 0 & 1 & 0 \\
        0 & 0 & 0 & 1
    \end{bmatrix},
    \,
    \gamma_4:=
    \begin{bmatrix}
        -1 & 0 & 0 & 0 \\
        0 & -1 & 0 & 0 \\
        0 & 0 & 1 & 0 \\
        0 & 0 & 0 & 1
    \end{bmatrix}
\end{align*}
Now fix a gain map $\phi$ of $G$ and a regular orbit placement $p$ of $(G,\phi)$ with $p(v_1) = (w,x,y,z) \neq (0,0,0,0)$.
From this we define the gain map $\phi'$ of $G'$ with $\phi'(e,v,w) = \phi(e,v,w)$ for all $(e,v,w) \in \vec{E}$ and $\phi'(e_i,v_0,v_1) = \gamma_i$ for each $i \in \{1,2,3,4\}$,
and we define the orbit placement $p'$ of $(G',\phi')$ with $p'(v) = p(v)$ for all $v \in V$ and $p'(v_0) = (0,0,0,0)$.
We now note that 
\begin{align*}
    R(G',\phi',p') :=
    \begin{bmatrix}
        R(G,\phi,p) & \mathbf{0}_{|E| \times 4} \\
        A & -B 
    \end{bmatrix},
\end{align*}
where $\mathbf{0}_{|E| \times 4}$ is the $|E| \times 4$ zero matrix,
$A$ is a $4 \times 4|V|$ matrix and $B$ is the $4 \times 4$ square matrix
\begin{align*}
    \begin{bmatrix}
        w & x & y & z\\
        -w & x & y & z\\
        w & -x & y & z\\
        -w & -x & y & z
    \end{bmatrix}.
\end{align*}
As $\rank B = 3$,
it follows that $(G',\phi',p')$ is $\Gamma$-symmetrically flexible regardless of whether $(G,\phi,p)$ is $\Gamma$-symmetrically rigid or not.
Hence $\mathbb{P}(G,\Gamma) > \mathbb{P}(G',\Gamma)$.

A similar construction for $d = 2$ is impossible.
In fact, we can even extend \Cref{p:0extprob} so that it covers some other graph extension moves also.

\begin{proposition}\label{p:2Dextprob}
    Let $\Gamma \leq O(2)$ be a finite group and $G=(V,E)$ be a multigraph.
    Let $G'=(V',E')$ be a multigraph formed from $G$ by a 0-extension or a loop-1-extension.
    Then $\mathbb{P}(G,\Gamma) = \mathbb{P}(G',\Gamma)$.
\end{proposition}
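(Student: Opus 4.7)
The proposition splits into three cases according to the shape of the extension: a $0$-extension with two new edges going to distinct vertices $v_1 \neq v_2$; a $0$-extension in which both new edges go to a single vertex $v_1 = v_2$ (so they are parallel in $G'$); and a loop-$1$-extension. The first case is precisely what \Cref{p:0extprob} gives us with $d=2$, so the work lies entirely in the latter two cases. In both, I would follow the block-decomposition strategy used in the proof of \Cref{p:0extprob}, but instead of placing the new vertex $v_0$ at the origin I would allow $p'(v_0)$ to be a suitably generic point of $\mathbb{R}^2$, chosen so that the $2\times 2$ bottom-right block of $R(G',\phi',p')$ remains invertible for every valid gain extension.

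Fix any gain map $\phi$ of $G$ and a regular orbit placement $p$ of $(G,\phi)$, and extend $\phi$ to an arbitrary valid gain map $\phi'$ of $G'$. In the parallel $0$-extension case, with $\gamma_i := \phi'(e_i,v_0,v_1)$ for $i=1,2$ and $\gamma_1 \neq \gamma_2$, the new block reads
\[
B \;=\; \begin{bmatrix} p'(v_0) - \gamma_1 p(v_1) \\ p'(v_0) - \gamma_2 p(v_1) \end{bmatrix},
\]
and the locus on which $\det B = 0$ is a proper algebraic subset of $\mathbb{R}^2$, since the two rows of $B$ differ by the nonzero vector $(\gamma_2-\gamma_1)p(v_1)$. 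In the loop-$1$-extension case, with loop gain $\gamma_0 \neq I$ on $\ell$ and edge gain $\gamma$ on the new edge $e'$ to a vertex $v \in V$, the new block is
\[
B \;=\; \begin{bmatrix} (2I - \gamma_0 - \gamma_0^{-1}) \, p'(v_0) \\ p'(v_0) - \gamma \, p(v) \end{bmatrix}.
\]
A short case analysis on $\gamma_0 \in O(2)\setminus\{I\}$ shows that $2I - \gamma_0 - \gamma_0^{-1}$ is either a nonzero scalar multiple of $I$ (if $\gamma_0$ is a non-trivial rotation) or a rank-one matrix whose image is the line perpendicular to the reflection axis of $\gamma_0$ (if $\gamma_0$ is a reflection); in either situation a generic $p'(v_0)$ again makes $B$ invertible.

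Once $B$ is invertible, the block-triangular form
\[
R(G',\phi',p') \;=\; \begin{bmatrix} R(G,\phi,p) & \mathbf{0} \\ A & B \end{bmatrix}
\]
immediately yields $\rank R(G',\phi',p') = \rank R(G,\phi,p) + 2$ and nullity equal to that of $R(G,\phi,p)$; since the space of trivial $\Gamma$-symmetric flexes has the same dimension $k(\Gamma)$ in both frameworks, $(G',\phi')$ is $\Gamma$-symmetrically rigid if and only if $(G,\phi)$ is. Counting valid extensions per gain map of $G$ gives $|\Gamma|(|\Gamma|-1)$ in the parallel $0$-extension case (the two parallel edges must carry distinct gains) and $(|\Gamma|-1)|\Gamma|$ in the loop-$1$-extension case (the loop must carry a non-trivial gain, while the new edge is unconstrained); these factors coincide with the ratio of total gain maps of $G'$ to those of $G$, so both the numerator and the denominator of the probability are multiplied by the same factor and $\mathbb{P}(G',\Gamma) = \mathbb{P}(G,\Gamma)$. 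The only technical subtlety is the rank-one behaviour of $2I - \gamma_0 - \gamma_0^{-1}$ when $\gamma_0$ is a reflection, but verifying generic invertibility of $B$ there is a short explicit check using that reflections in $O(2)$ fix a line.
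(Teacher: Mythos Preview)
Your proposal is correct and follows essentially the same approach as the paper: reduce the non-parallel $0$-extension to \Cref{p:0extprob}, and for the remaining two cases place the new vertex $v_0$ generically so that the $2\times 2$ block $B$ in the block-triangular decomposition of $R(G',\phi',p')$ is invertible, then count the valid gain extensions. The paper additionally stipulates that the regular placement $p$ satisfies $\gamma p(v_1) \neq \mu p(v_1)$ for all distinct $\gamma,\mu\in\Gamma$ (which you implicitly need when asserting $(\gamma_2-\gamma_1)p(v_1)\neq 0$), while your rotation/reflection case analysis of $2I-\gamma_0-\gamma_0^{-1}$ is slightly more explicit than the paper's one-line remark that this matrix is never zero.
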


\begin{proof}
    If $G'$ is formed from $G$ by a 0-extension of $G$ that adds two edges that are not parallel,
    then $\mathbb{P}(G,\Gamma) = \mathbb{P}(G',\Gamma)$ by \Cref{p:0extprob}.
    For the other cases,
    we always be adding a vertex $v_0$ which is adjacent to exactly one vertex $v_1$ of $G$.
    Fix a gain map $\phi$ of $G$ and an orbit placement $p$ of the gain graph $(G,\phi)$;
    we assume the latter has the added property that for every distinct pair $\gamma,\mu \in \Gamma$ we have $\gamma p(v_1) \neq \mu p(v_1)$.
    We now split into the two remaining cases.
    
    ($G'$ is a 0-extension of $G$):
    Let $e_1,e_2$ be the two edges joining $v_0$ and $v_1$ in $G'$.
    Choose any gain map $\phi'$ of $G'$ so that $\phi'(e,v,w) = \phi(e,v,w)$ for all $(e,v,w) \in \vec{E}$.
    Since $\phi'$ is a gain map,
    the group elements $\gamma_1 := \phi'(e_1,v_0,v_1)$ and $\gamma_2 := \phi'(e_2,v_0,v_1)$ must be distinct,
    and hence $\gamma_1 p(v_1) \neq \gamma_2 p(v_2)$.
    Choose an orbit placement $p'$ of $(G',\phi')$ where $p'(v) = p(v)$ for all $v \in V$ and $p'(v_0)$ does not lie on the line through the points $\gamma_1 p(v_1),\gamma_2 p(v_2)$.
    We now note that the orbit rigidity matrix of $(G',\phi',p')$ is of the form
    \begin{align}\label{eq:matrix}
        \begin{bmatrix}
            R(G,\phi,p) & \mathbf{0}_{|E| \times 2} \\
            A & B 
        \end{bmatrix},
    \end{align}
    where $\mathbf{0}_{|E| \times 2}$ is the $|E| \times 2$ zero matrix,
    $A$ is a $2 \times 2|V|$ matrix and $B$ is the $2 \times 2$ square matrix
    \begin{align*}
        \begin{bmatrix}
            p(v_0) - \gamma_1 p(v_1) \\
            p(v_0) - \gamma_2 p(v_2)
        \end{bmatrix}.
    \end{align*}
    By our choice of $p$,
    the matrix $B$ has independent rows,
    hence $R(G',\phi',p') = R(G,\phi,p) + 2$.
    It now follows that $(G',\phi')$ is $\Gamma$-symmetrically rigid if and only if $(G,\phi)$ is $\Gamma$-symmetrically rigid.
    We note that due to the layout of the orbit rigidity matrix for $(G',\phi',p')$,
    the orbit framework $(G',\phi',p')$ is regular.
    
    For every gain map $\phi$ of $G$,
    there exist exactly $|\Gamma|(|\Gamma|-1)$ gain maps $\phi'$ of $G$ that agree with $\phi$ on the edges of $G$,
    and, as we have just proven,
    each gain graph $(G',\phi')$ is $\Gamma$-symmetrically rigid if and only if $(G,\phi)$ is.
    Hence $\mathbb{P}(G,\Gamma) = \mathbb{P}(G',\Gamma)$.
    
    ($G'$ is a loop-1-extension of $G$):
    Let $\ell$ be the loop at $v_0$ in $G'$ and $e_1$ be the edge joining $v_0$ and $v_1$ in $G'$.
    Choose any gain map $\phi'$ of $G'$ so that $\phi'(e,v,w) = \phi(e,v,w)$ for all $(e,v,w) \in \vec{E}$.
    Fix the group elements $\gamma_1 := \phi'(e_1,v_0,v_1)$ and $\gamma_\ell := \phi'(\ell,v_0,v_0)$.
    Choose an orbit placement $p'$ of $(G',\phi')$ where $p'(v) = p(v)$ for all $v \in V$ and $p'(v_0)$ is chosen so that $p'(v_0) - \gamma_1 p'(v_1)$ and $2p'(v_0) - \gamma_\ell p'(v_0) - \gamma_\ell^{-1} p'(v_0)$ are linearly independent;
    this can be seen to be possible as $(2I - \gamma_\ell - \gamma_\ell^{-1})$ is never the zero matrix.
    We now note that the orbit rigidity matrix of $(G',\phi',p')$ is of the form given in \Cref{eq:matrix},
    except now $B$ is the $2 \times 2$ square matrix
    \begin{align*}
        \begin{bmatrix}
            p(v_0) - \gamma_1 p(v_1) \\
            2p'(v_0) - \gamma_\ell p'(v_0) - \gamma_\ell^{-1} p'(v_0)
        \end{bmatrix}.
    \end{align*}
    By our choice of $p$,
    the matrix $B$ has independent rows,
    hence $R(G',\phi',p') = R(G,\phi,p) + 2$.
    It now follows that $(G',\phi')$ is $\Gamma$-symmetrically rigid if and only if $(G,\phi)$ is $\Gamma$-symmetrically rigid.
    We note that due to the layout of the orbit rigidity matrix for $(G',\phi',p')$,
    the orbit framework $(G',\phi',p')$ is regular.
    
    For every gain map $\phi$ of $G$,
    there exist exactly $|\Gamma|(|\Gamma|-1)$ gain maps $\phi'$ of $G$ that agree with $\phi$ on the edges of $G$,
    and, as we have just proven,
    each gain graph $(G',\phi')$ is $\Gamma$-symmetrically rigid if and only if $(G,\phi)$ is.
    Hence $\mathbb{P}(G,\Gamma) = \mathbb{P}(G',\Gamma)$.
\end{proof}

There is no version of \Cref{p:2Dextprob} for 1-extensions however;
in fact, we cannot even tell if the probability will increase or decrease.
For example,
let $\Gamma$ be the group of all $90^\circ$ rotations in the plane.
\Cref{fig:riggain1ext} (left) depicts a 1-extension where the probability of a gain map describing a $\Gamma$-symmetrically rigid gain graph decreases,
while \Cref{fig:riggain1ext} (right) depicts a 1-extension where the probability increases.

\begin{figure}[ht]
    \centering
    \begin{tikzpicture}[earrow/.style={line width=2pt,-latex}]
        \begin{scope}
            \begin{scope}
                \node[vertex] (1) at (-30:0.8) {};
                \node[vertex] (2) at (90:0.8) {};
                \node[vertex] (3) at (210:0.8) {};
                \node[vertex,rotate around=-60:(1)] (4) at (2) {};
                \draw[edge] (1)to(2);
                \draw[edge] (1)to(3);
                \draw[edge] (2)to[bend right=15] (3);
                \draw[edge] (2)to[bend right=-15] (3);
                \draw[edge] (2)to[bend right=-20] (4);
                \draw[edge] (2)to[bend right=0] (4);
                \draw[edge] (2)to[bend right=20] (4);
                \node[labelsty,align=center] at (0,-1.5) {$100\%$ rigid};
            \end{scope}
            \draw[earrow] (1.5,0)--(2.5,0);
            \begin{scope}[xshift=4cm]
                \node[vertex] (1) at (-30:0.8) {};
                \node[vertex] (2) at (90:0.8) {};
                \node[vertex] (3) at (210:0.8) {};
                \node[vertex,rotate around=-60:(1)] (4) at (2) {};
                \node[vertex,rotate around=60:(3)] (5) at (2) {};
                \draw[edge] (1)to(2);
                \draw[edge] (1)to(3);
                \draw[edge] (2)to(3);
                \draw[edge] (2)to[bend right=-20] (4);
                \draw[edge] (2)to[bend right=0] (4);
                \draw[edge] (2)to[bend right=20] (4);
                \draw[edge] (1)to(5);
                \draw[edge] (2)to(5);
                \draw[edge] (3)to(5);
                \node[labelsty,align=center] at (0,-1.5) {$\sim98.4\%$ rigid};
            \end{scope}
        \end{scope}
        
        \begin{scope}[xshift=9cm]
            \begin{scope}
                \node[vertex] (1) at (0:1) {};
                \node[vertex] (2) at (72:1) {};
                \node[vertex] (3) at (144:1) {};
                \node[vertex] (4) at (216:1) {};
                \node[vertex] (5) at (288:1) {};
                \draw[edge] (1)to(2);
                \draw[edge] (1)to(3);
                \draw[edge] (1)to(4);
                \draw[edge] (1)to(5);
                \draw[edge] (2)to(3);
                \draw[edge] (2)to(4);
                \draw[edge] (2)to(5);
                \draw[edge] (3)to(4);
                \draw[edge] (3)to(5);
                \node[labelsty,align=center] at (0,-1.5) {$\sim96.1\%$ rigid};
            \end{scope}
            \draw[earrow] (1.5,0)--(2.5,0);
            \begin{scope}[xshift=3.6cm]
                \node[vertex] (1) at (0:1) {};
                \node[vertex] (2) at (72:1) {};
                \node[vertex] (3) at (144:1) {};
                \node[vertex] (4) at (216:1) {};
                \node[vertex] (5) at (288:1) {};
                \node[vertex] (6) at (324:1.2) {};
                \draw[edge] (1)to(2);
                \draw[edge] (1)to(3);
                \draw[edge] (1)to(4);
                \draw[edge] (2)to(3);
                \draw[edge] (2)to(4);
                \draw[edge] (2)to(5);
                \draw[edge] (3)to(4);
                \draw[edge] (3)to(5);
                \draw[edge] (1)to(6);
                \draw[edge] (5)to(6);
                \draw[edge] (3)to(6);
                \node[labelsty,align=center] at (0,-1.5) {$\sim97.8\%$ rigid};
            \end{scope}
        \end{scope}
    \end{tikzpicture}
    \caption{1-extensions with different effects on the relative number of rigid gains.}
    \label{fig:riggain1ext}
\end{figure}
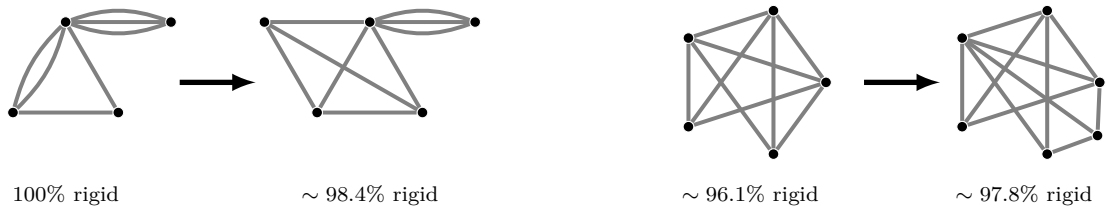

\subsection{Approximating probabilities}

All of the examples illustrated above have a very large percentage of rigid gain assignments. We shall prove that the other extreme is possible.

\begin{theorem}\label{thm:qepsilon}
    Let $\Gamma \leq O(d)$ a finite group and $q \in (0,1)$.
    For every $\varepsilon >0$,
    there exists $N \in \mathbb{N}$ so that for each $n \geq N$ there exists a multigraph on $n$ vertices with $|\mathbb{P}(G,\Gamma) -q| < \varepsilon$.
\end{theorem}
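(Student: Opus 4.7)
The plan is to first construct a single multigraph $H$ with $|\mathbb{P}(H,\Gamma) - q| < \varepsilon$ and then pad $H$ with 0-extensions to any desired vertex count $n \geq |V(H)|$, using \Cref{p:0extprob,p:2Dextprob} to preserve the probability at each step. For the construction of $H$, I would choose two base multigraphs $H_1, H_2$ with $\Gamma$-rigidity probabilities $p_1, p_2 \in (0,1)$ satisfying: (i) $\max(p_1,p_2) > 1-\varepsilon$, and (ii) $\log p_1 / \log p_2 \notin \mathbb{Q}$. Condition (i) is available because $\mathbb{P}(G,\Gamma)$ takes values arbitrarily close to $1$ among small multigraphs (the examples in \Cref{fig:dim2rot,fig:dim3rot} already approach $100\%$). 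Condition (ii) is generic: rigid-gain probabilities are rationals of the form (\# rigid assignments)/(\# gain maps), and any pair whose reduced numerators and denominators involve different primes is multiplicatively independent in $\mathbb{Q}^\times$, forcing the log ratio to be irrational.

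Next I would introduce a combining operation $\star$ on $\Gamma$-gain graphs with
\[
\mathbb{P}(G_1 \star G_2, \Gamma) \;=\; \mathbb{P}(G_1,\Gamma)\cdot\mathbb{P}(G_2,\Gamma).
\]
The candidate identifies a set of $\lceil k(\Gamma)/d \rceil$ vertices of $G_1$ with a corresponding set of $G_2$ in general position, enough to merge the two $k(\Gamma)$-dimensional trivial flex spaces compatibly across the gluing. The edge sets remain disjoint, so the orbit rigidity matrix splits into two blocks, a regular orbit placement restricts to regular placements of each piece, and $\Gamma$-rigidity of $G_1 \star G_2$ becomes equivalent to $\Gamma$-rigidity of both pieces individually; the probability then factorises. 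Iterating, the set $\{p_1^a p_2^b : a,b \in \mathbb{Z}_{\geq 0}\}$ is, by Weyl equidistribution applied to the irrational $\log p_1/\log p_2$, dense in $(0, \max(p_1,p_2)]$. Now, given $q \in (0,1)$: if $q \geq \max(p_1,p_2)$ then $H := H_1$ (with $p_1 = \max(p_1,p_2)$) already has $|\mathbb{P}(H,\Gamma) - q| < 1 - \max(p_1,p_2) < \varepsilon$; otherwise choose $a, b$ with $|p_1^a p_2^b - q| < \varepsilon$ and set $H := H_1^{\star a}\star H_2^{\star b}$.

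The hard part will be designing $\star$ so that the multiplicativity holds exactly. Gluing at a single vertex is insufficient in general: the two $k(\Gamma)$-dimensional trivial flex spaces of $G_1, G_2$ do not decouple cleanly through a solitary shared point, so rigidity of $G_1 \star G_2$ does not factor as rigidity of each piece. Identifying $\lceil k(\Gamma)/d \rceil$ generic vertices should force the evaluation maps from the trivial flex spaces to $\mathbb{R}^{d\lceil k(\Gamma)/d\rceil}$ at the glued vertices to be injective, giving the rank additivity of the orbit rigidity matrix needed for the factorisation; verifying this in general (with case analysis depending on the structure of $\Iso(\Gamma)$) is the technical heart of the argument. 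A secondary concern is the existence of base multigraphs with irrational log ratio, which is generically immediate from mismatched prime factorisations of the computed $\mathbb{P}$-values, but which in pathological cases would fall back on Baker's theorem on linear independence of logarithms of algebraic numbers.
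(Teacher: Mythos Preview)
Your combining operation $\star$ via vertex identification does not give the multiplicativity you need. Take $d=2$, $\Gamma$ a cyclic rotation group so $k(\Gamma)=1$ and $\lceil k/d\rceil=1$, and let $(G_1,\phi_1)$ be a $\Gamma$-gain graph whose flex space is exactly two-dimensional: the trivial rotational flex $t$ together with one non-trivial flex $u$. For a generic gluing vertex $v$ (not at the origin) and a generic $G_1$, the vector $u(v)$ does not lie in the one-dimensional span of $t(v)=Rp(v)$. Now glue $G_1$ to a rigid $G_2$ at $v$. Any flex of $G_1\star G_2$ restricts to a trivial flex on $G_2$, so its value at $v$ lies in $\operatorname{span}(Rp(v))$; but the only flexes of $G_1$ whose value at $v$ lies in that line are the trivial ones. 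Hence $G_1\star G_2$ is rigid while $G_1$ is not, and $\mathbb{P}(G_1\star G_2,\Gamma)>\mathbb{P}(G_1,\Gamma)\mathbb{P}(G_2,\Gamma)$. Injectivity of the trivial-flex evaluation map, which is what your heuristic checks, is not the relevant condition: you need the \emph{full} flex evaluation at the glued vertices to separate trivial from non-trivial flexes, and gluing at $\lceil k/d\rceil$ vertices is too few constraints for that.

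The paper's fix is to join $G_1$ and $G_2$ by $k(\Gamma)$ independent \emph{edges} rather than identifying vertices (\Cref{lem:kedges,lem:smallprob}); the edge constraints are exactly enough to force the rank additivity. More importantly, the entire Weyl/Baker apparatus is unnecessary. Once you have a single multigraph $H$ with $\mathbb{P}(H,\Gamma)=1-\delta$ for some $\delta\in(0,\varepsilon)$, the sequence $a_m=(1-\delta)^m$ satisfies $a_m-a_{m+1}<\delta<\varepsilon$ and $a_m\to 0$, so it is already $\varepsilon$-dense in $(0,1)$ and you can hit any $q$ with powers of $H$ alone. No second base graph, no irrationality of log ratios, no equidistribution. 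Finally, the existence of such an $H$ for an \emph{arbitrary} finite $\Gamma\leq O(d)$ is not something you can read off the low-dimensional computational tables; the paper proves it separately (\Cref{lem:gammah,lem:bigprob}) by an explicit construction.
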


We first note that we can multiply the probabilities for any pair of multigraphs with the following two results.

\begin{lemma}\label{lem:kedges}
    Let $\Gamma$ be a subgroup of $O(d)$ and  $k$ be the dimension of the group of trivial isometries for all $\Gamma$-symmetric frameworks.
    Let $(G_1,\phi_1)$ and $(G_2,\phi_2)$ be $\Gamma$-gain graphs with at least $\max \{d+1,k\}$ vertices each.
    Let $(G,\phi)$ be a $\Gamma$-gain graph formed by joining $(G_1,\phi_1)$ and $(G_2,\phi_2)$ by $k$ independent gained edges.
    Then $(G,\phi)$ is $\Gamma$-symmetrically rigid in $\mathbb{R}^d$ if and only if both $(G_1,\phi_1)$ and $(G_2,\phi_2)$ are $\Gamma$-symmetrically rigid in $\mathbb{R}^d$.
\end{lemma}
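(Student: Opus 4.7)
The plan is to reduce the statement to a rank computation on the orbit rigidity matrix, using its natural block structure induced by the partition $V=V_1\sqcup V_2$. I would first pick a regular orbit placement $p$ of $(G,\phi)$ whose restrictions $p_1,p_2$ to $V_1,V_2$ remain regular for the subframeworks $(G_i,\phi_i)$; this is possible because regular placements form a Zariski-open subset of all orbit placements. Ordering vertices with $V_1$ first and listing the $k$ bridging edges last, one obtains
\[
R(G,\phi,p) \;=\; \begin{pmatrix} R(G_1,\phi_1,p_1) & 0 \\ 0 & R(G_2,\phi_2,p_2) \\ B_1 & B_2 \end{pmatrix},
\]
where $(B_1\mid B_2)$ collects the $k$ bridging rows. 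Since each $|V_i|\geq\max\{d+1,k\}$, I would next argue that the space of trivial $\Gamma$-symmetric infinitesimal flexes of $(G_i,\phi_i,p_i)$ has the full dimension $k$; this uses the fact (recalled just before \Cref{prop:necgain}) that $k=k(\Gamma)$ is the dimension of the trivial-flex space as soon as the points of $p_i$ affinely span $\mathbb R^d$, which regularity of $p_i$ ensures. Consequently $\rank R(G_i,\phi_i,p_i)\leq d|V_i|-k$, with equality if and only if $(G_i,\phi_i)$ is $\Gamma$-symmetrically rigid.

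For the forward implication, I would apply sub-additivity of row rank to the block decomposition. If $(G,\phi)$ is $\Gamma$-symmetrically rigid, then $\rank R(G,\phi,p)=d|V|-k$; but this rank is at most $\rank R(G_1,\phi_1,p_1)+\rank R(G_2,\phi_2,p_2)+k$, while each summand obeys the upper bound $d|V_i|-k$. Writing the two inequalities together forces both component ranks to hit their maxima, so each $(G_i,\phi_i)$ is rigid.

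For the converse, I would argue on the kernel. Assuming both $(G_i,\phi_i)$ rigid, $\ker\bigl(R(G_1,\phi_1,p_1)\oplus R(G_2,\phi_2,p_2)\bigr)$ has dimension $2k$ and is canonically identified with $\mathcal T_1\oplus\mathcal T_2$, the direct sum of trivial $\Gamma$-symmetric flex spaces on each piece. The $k$-dimensional diagonal subspace $\mathcal T\subset\mathcal T_1\oplus\mathcal T_2$ consisting of global trivial flexes of $(G,\phi,p)$ automatically lies in $\ker R(G,\phi,p)$, and the quotient $\mathcal F:=(\mathcal T_1\oplus\mathcal T_2)/\mathcal T$ is a $k$-dimensional space of \emph{relative} trivial motions of $V_2$ with respect to $V_1$. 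The independence hypothesis on the $k$ bridging edges is then precisely the statement that the induced linear map $\mathcal F\to\mathbb R^k$ given by the rows $(B_1\mid B_2)$ is injective, hence an isomorphism. It follows that $\ker R(G,\phi,p)=\mathcal T$, so $(G,\phi)$ is $\Gamma$-symmetrically rigid.

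The main obstacle is pinning down the definition of ``$k$ independent gained edges'' and verifying it matches the injectivity condition on $\mathcal F$; this should unravel to the rigidity-matroid statement that the $k$ bridging rows are linearly independent modulo the row space of the block diagonal $R_1\oplus R_2$, which by rank-nullity duality is exactly injectivity on the $k$-dimensional quotient of the kernel by global trivial flexes. A secondary technical point is making sure that the regular placement $p$ can be chosen so that $p_1$ and $p_2$ are simultaneously regular and yield the full $k$-dimensional trivial-flex space on each piece; this is a standard Zariski-openness argument, but it is where the size hypothesis $|V_i|\geq\max\{d+1,k\}$ is used.
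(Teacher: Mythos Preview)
Your forward implication via rank sub-additivity is clean and essentially equivalent to what the paper obtains from its dimension count. The converse, however, has a genuine gap at exactly the point you flag as ``the main obstacle''.

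In the paper, ``$k$ independent gained edges'' means simply that the edges $v_1w_1,\ldots,v_kw_k$ form a matching (no two share an endpoint); see the proof and \Cref{fig:edgeconnect}. This is a purely combinatorial hypothesis and does \emph{not} translate, for an arbitrary regular placement, into the matroid-theoretic statement that the bridging rows $(B_1\mid B_2)$ are independent modulo the row space of $R_1\oplus R_2$, i.e.\ that your map $\mathcal F\to\mathbb R^k$ is injective. Your proposal asserts this equivalence without proof, and it is precisely the content that needs to be supplied. Choosing $p$ regular for $(G,\phi)$ does not help here unless you already know that the maximal rank is $d|V|-k$, which is what you are trying to prove.

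The paper closes this gap by an explicit construction rather than a genericity argument: after a gain-switching so that the bridging edges have trivial gain, it fixes a regular $p_1$ and a basis $u^1,\ldots,u^k$ of the trivial-flex space of $(G_1,\phi_1,p_1)$, perturbs so that each $u^i(v_i)\neq 0$, and then places each $w_i$ in the open set $W_i=\{x:(p_1(v_i)-x)\cdot u^i(v_i)\neq 0\}$. The matching hypothesis is what makes this possible, since the $w_i$ are distinct and can be positioned independently. This exhibits $k$ elements $u^i\oplus 0\in K_1\oplus K_2$ violating the bridging constraints, forcing $\dim K\leq\dim K_1+\dim K_2-k$; combined with the easy reverse inequality one gets $\dim K=\dim K_1+\dim K_2-k$, from which both directions follow. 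To repair your argument you would need to insert essentially this construction (or an equivalent one) to justify the injectivity of $\mathcal F\to\mathbb R^k$ for at least one placement.
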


\begin{proof}
    We first note that 
    we may assume that the gain of each edge connecting $G_1$ to $G_2$ has trivial gain\footnote{It is well known that one may multiply the gains on all edges incident to a vertex by a common group element without changing the $\Gamma$-symmetric rigidity \cite{jordkasztani16}.}.
    Let $v_1w_1,\ldots,v_kw_k$ be the $k$ independent edges between $G_1$ and $G_2$,
    where each $v_i$ lies in $G_1$ and each $w_i$ lies in $G_2$.
    Choose a regular orbit placement $p_1$ of $(G_1,\phi_1)$.
    Let $u^1,\ldots,u^k$ be a basis of the trivial infinitesimal of $(G_1,\phi_1,p_1)$.
    We may assume by perturbing vertices that for every element $u^i$ of the basis we have $u^i(v) \neq 0$ for each $v \in V(G_1)$;
    this is as trivial infinitesimal flexes are defined by multiplying each $p(v)$ by a skew-symmetric matrix that is non-zero on an open dense set of points.
    Define for each $i \in \{1,\ldots,k\}$ the open dense subset $W_i := \{ x \in \mathbb{R}^d : p(v_i) . u(v_i) \neq x . u(v_i) \}$.
    Now define $p_2$ to be a regular orbit placement of $(G_2,\phi_2)$ in general position where $p_2(v) \neq p_2(w)$ for all $v \in V(G_1)$ and $w \in V(G_2)$,
    and $p_2(w_i) \in W_i$ for each $i \in \{1,\ldots,k\}$.
    Now define $(G,\phi,p)$ to be the regular framework with $p(v) = p_i(v)$ for all $v \in V(G_i)$;
    we may assume $p$ is regular by perturbing the orbit placements $p_1$ and $p_2$ whilst keeping all the previous properties discussed.
    
    Define $K_i := \ker R(G_i,\phi_i,p_i)$ and $K:= \ker R(G,\phi,p)$.
    Any infinitesimal flex of $(G,\phi,p)$ is a subset of $K_1 \oplus K_2$.
    We note that as the $k$ edges $v_1w_1,\ldots,v_kw_k$ form $k$ linear constraints,
    we have $\dim K_1 + \dim K_2 - k \leq \dim K \leq \dim K_1 + \dim K_2$
    It now suffices to prove that there are $k$ linearly independent elements in $K_1 \oplus K_2$ that are not in $K$.
    By our choice of framework,
    each of the elements $u^i \oplus 0 \in K_1 \oplus K_2$ does not lie in $K$,
    hence the result holds.
\end{proof}

\begin{lemma}\label{lem:smallprob}
    Let $\Gamma \leq O(d)$ be a finite group.
    For any pair of multigraphs $G_1,G_2$, with at least $d$ vertices,
    there exists a multigraph $G$ where $\mathbb{P}(G,\Gamma) = \mathbb{P}(G_1,\Gamma) \mathbb{P}(G_2, \Gamma)$.
\end{lemma}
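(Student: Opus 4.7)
The plan is to build $G$ as the disjoint union of $G_1$ and $G_2$, together with $k := k(\Gamma)$ ``bridge'' edges whose endpoints are $k$ distinct vertices of $G_1$ paired with $k$ distinct vertices of $G_2$. Because our components are disjoint before the bridging, and the chosen endpoint pairs are all different, none of the bridge edges is a loop or parallel to any other edge of $G$. If the hypothesis $|V(G_i)| \geq d$ is insufficient to pick $k$ distinct endpoints (which may happen when $k > d$, as can occur in higher dimensions), we first enlarge $G_1$ and $G_2$ by a sequence of 0-extensions of the type covered by \Cref{p:0extprob}, noting that this leaves the probabilities $\mathbb{P}(G_i,\Gamma)$ unchanged.

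Next I invoke \Cref{lem:kedges} applied to the generic placement of the bridge edges: it states that $(G,\phi)$ is $\Gamma$-symmetrically rigid in $\mathbb{R}^d$ if and only if both $(G_1,\phi|_{E(G_1)})$ and $(G_2,\phi|_{E(G_2)})$ are. The crucial feature here is that this equivalence is independent of the gains assigned to the $k$ bridge edges, since the proof of \Cref{lem:kedges} allows the bridge gains to be normalised to the identity via the standard switching operation at either endpoint.

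The counting argument then factors cleanly. Since the bridge edges are not loops and not parallel to any other edge, any triple $(\phi_1, \phi_2, \psi)$ consisting of a gain map $\phi_1$ on $G_1$, a gain map $\phi_2$ on $G_2$, and an arbitrary function $\psi$ from the bridge edges to $\Gamma$ yields a valid gain map on $G$, and every gain map on $G$ arises uniquely this way. Hence the total number of gain maps on $G$ equals $|\Gamma|^k$ times the product of the numbers of gain maps on $G_1$ and $G_2$, and by the equivalence above the same factorisation holds for the counts of rigid gain maps. Dividing the two identities, the factors $|\Gamma|^k$ cancel and we obtain
\[
\mathbb{P}(G,\Gamma) \;=\; \mathbb{P}(G_1,\Gamma)\,\mathbb{P}(G_2,\Gamma).
\]

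The main obstacle I anticipate is precisely the ``independence of the $k$ bridge edges'' hypothesis in \Cref{lem:kedges}: one must check that the $k$ rows added to the orbit rigidity matrix by these edges are generically linearly independent, so that rigidity truly transfers component-wise for every gain choice (not just generic ones on the bridges). The argument for this is built into \Cref{lem:kedges}'s proof through the basis $u^1,\ldots,u^k$ of trivial flexes and the open dense sets $W_i$; ensuring each bridge endpoint lies in the appropriate $W_i$ only requires that enough distinct endpoint choices are available in each $G_i$, which is guaranteed by the preliminary 0-extension step above.
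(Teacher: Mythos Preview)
Your proposal is correct and follows essentially the same approach as the paper: enlarge $G_1,G_2$ via \Cref{p:0extprob}, join them by $k=k(\Gamma)$ independent edges, invoke \Cref{lem:kedges} to get the rigidity equivalence regardless of the bridge gains, and then factor the count. One small refinement: \Cref{lem:kedges} requires each $G_i$ to have at least $\max\{d+1,k\}$ vertices, so you should trigger the 0-extension step whenever $|V(G_i)|<\max\{d+1,k\}$, not only when $k>d$.
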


\begin{proof}
   Define $k$ to be the dimension of the group of trivial isometries for all $\Gamma$-symmetric frameworks.
   By applying \Cref{p:0extprob},
   we may assume that both $G_1$ and $G_2$ have at least $\max\{d+1,k\}$ vertices each.
   Let $G$ be the multigraph formed from joining $G_1$ and $G_2$ by $k$ independent edges (see \Cref{fig:edgeconnect}).
   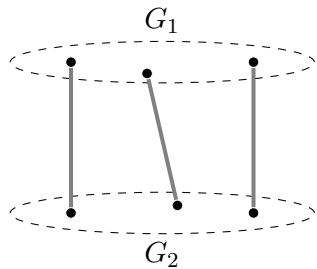
\begin{figure}[ht]
       \centering
        \begin{tikzpicture}[yscale=0.5]
            \node[vertex] (h11) at (-1.2,2) {};
            \node[vertex] (h12) at (1.2,2) {};
            \node[vertex] (h13) at (-0.2,1.7) {};
            \node[vertex] (h21) at (-1.2,-2) {};
            \node[vertex] (h22) at (1.2,-2) {};
            \node[vertex] (h23) at (0.2,-1.8) {};
            \draw[dashed] ($(h11)!0.5!(h12)$) circle [x radius=2cm,y radius=0.55cm];
            \draw[dashed] ($(h21)!0.5!(h22)$) circle [x radius=2cm,y radius=0.55cm];
            \draw[edge] (h11)edge(h21) (h12)edge(h22) (h13)edge(h23);
            \node[] at ($(h11)!0.5!(h12)+(0,1.1)$) {$G_1$};
            \node[] at ($(h21)!0.5!(h22)+(0,-1.1)$) {$G_2$};
        \end{tikzpicture}
        \caption{Joining two multigraphs $G_1$ and $G_2$ by $k$ independent edges when $k=3$.}
        \label{fig:edgeconnect}
   \end{figure}
  
   If we choose any $\Gamma$-gain map $\phi$ for $G$ and define $\phi_i$ to be the restriction of $\phi$ to the edges of $G_i$,
   then $(G,\phi)$ is $\Gamma$-symmetrically rigid if and only if both $(G_1,\phi_1)$ and $(G_2,\phi_2)$ are $\Gamma$-symmetrically rigid by \Cref{lem:kedges}.
   Hence, $\mathbb{P}(G,\Gamma) = \mathbb{P}(G_1,\Gamma) \mathbb{P}(G_2, \Gamma)$ as required.
\end{proof}

We now wish to construct a multigraph with an arbitrarily high probability.
To do so we first need the following result.

\begin{lemma}\label{lem:gammah}
    For each finite subgroup $\Gamma \leq O(d)$, there exists a simple graph $H$ with at least $d+1$ vertices where $0< \mathbb{P}(H,\Gamma) <1$.
\end{lemma}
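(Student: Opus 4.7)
The plan is to prove the two bounds separately. For the upper bound $\mathbb{P}(H,\Gamma) < 1$, I would invoke the observation made in the discussion preceding the lemma: for every simple graph $H$, the all-identity gain map $\phi_0$ produces an orbit rigidity matrix identical to the standard rigidity matrix of $(H,p)$, whose kernel contains the full $\binom{d+1}{2}$-dimensional space of infinitesimal isometries of $\mathbb{R}^d$. Since $\binom{d+1}{2} > k(\Gamma)$ for every non-trivial $\Gamma$, the framework $(H,\phi_0)$ is $\Gamma$-symmetrically flexible, so $\mathbb{P}(H,\Gamma) < 1$.

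For the lower bound I would take $H = K_N$ with $N \geq 2d+1$ and construct a $\Gamma$-rigid gain assignment by iteratively modifying $\phi_0$. The engine of the construction is the following identity: for a trivial infinitesimal flex $u(v) = Tp(v) + x$ (with $T$ skew-symmetric and $x \in \mathbb{R}^d$) and an edge $\{v,w\}$ with gain $\gamma$, using orthogonality of $\gamma$ together with skew-symmetry of $T$ one rewrites the orbit rigidity constraint $(p(v) - \gamma p(w)) \cdot u(v) + (p(w) - \gamma^{-1} p(v)) \cdot u(w)$ as
\[
  p(v) \cdot (I - \gamma) x + p(w) \cdot (I - \gamma^{-1}) x + p(v) \cdot (T\gamma - \gamma T) p(w).
\]
For $p$ in general position this vanishes for every pair $\{v,w\}$ and every $\gamma \in \Gamma$ simultaneously if and only if $\gamma x = x$ and $T\gamma = \gamma T$ for every $\gamma \in \Gamma$, which is precisely the defining condition of a $\Gamma$-symmetric trivial flex. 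Consequently, whenever the current kernel strictly contains the $k(\Gamma)$-dimensional space of $\Gamma$-symmetric trivial flexes, the identity provides a non-$\Gamma$-symmetric flex $u$ in the kernel together with an edge $e$ and a $\gamma \in \Gamma \setminus \{1\}$ such that the $\gamma$-gain constraint at $e$ fails on $u$.

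Starting from $\phi_0$ the kernel of $R(K_N,\phi_0,p)$ has dimension exactly $\binom{d+1}{2}$. At each iteration I would pick such a non-$\Gamma$-symmetric $u$, select a \emph{trivial-gain} edge $e$ whose current row is redundant, and a $\gamma \in \Gamma \setminus \{1\}$ detecting $u$, and then replace $\phi(e)$ by $\gamma$. Redundancy of the old row combined with non-redundancy of the new row (the latter forced by $u$ being detected) makes the nullity drop by exactly one and introduces no new non-trivial flex. The initial surplus of $(N-d)(N-d-1)/2$ redundant rows in the rigidity matrix of $K_N$, which equals $\binom{d+1}{2}$ at $N = 2d+1$, guarantees that enough trivial-gain redundant rows survive throughout the $\binom{d+1}{2} - k(\Gamma)$ iterations; after the last iteration the kernel coincides with the $\Gamma$-symmetric trivial flex space, so $(K_N, \phi)$ is $\Gamma$-symmetrically rigid and $\mathbb{P}(K_N,\Gamma) > 0$.

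The hardest part of this plan will be the bookkeeping along the iteration: at every step one must simultaneously secure a trivial-gain edge whose row is currently redundant, a gain $\gamma \in \Gamma \setminus \{1\}$ that detects the targeted kernel flex, and the guarantee that the resulting modification neither leaves the rank unchanged nor creates new non-trivial flexes. All three are generic-position conditions on the orbit placement $p$, and the crude surplus bound at $N \geq 2d+1$ leaves comfortable slack throughout the at most $\binom{d+1}{2}$ steps of the iteration; combining this with the upper bound above then gives $0 < \mathbb{P}(K_N,\Gamma) < 1$, as required.
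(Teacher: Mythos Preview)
Your upper bound argument is fine and coincides with the paper's observation.

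For the lower bound, the paper takes a completely different route: it first observes that the \emph{complete loopless $\Gamma$-gain graph} $(H',\phi')$ on $d+1$ vertices (one edge with every possible gain between each pair) is $\Gamma$-symmetrically rigid, and then builds a \emph{simple} graph $H$ that simulates $(H',\phi')$ by blowing up each vertex $v_i$ into copies $v_{i,\gamma}$ (one per $\gamma\in\Gamma$), all placed at the same point as $v_i$.  Trivial-gain edges from the core to the copies force $u(v_{i,\gamma})=u(v_i)$ for every flex, and then the single gained edge $v_{i,I}v_{j,\gamma}$ reproduces the constraint of the gain-$\gamma$ edge in $H'$.  This is explicit and sidesteps any iteration.

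Your iterative scheme on $K_N$ has a genuine gap at the key step.  At stage $k$ you need a \emph{single} edge $e$ that is simultaneously (i) a trivial-gain edge whose row is redundant in the current matrix $R_k$, and (ii) such that some $\gamma$-row at $e$ detects the chosen non-$\Gamma$-symmetric $u=(T,x)$.  You have good control over (i): the trivial-gain submatrix keeps rank $dN-\binom{d+1}{2}$ throughout, so redundant trivial-gain edges persist.  But (ii) is not a condition on $p$ alone.  The flex $u$ lives in $V_k=\ker R_k$, and $V_k$ depends on all previous choices $(e_1,\gamma_1),\ldots,(e_k,\gamma_k)$, which you selected \emph{using} $p$.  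So the assertion that ``all three are generic-position conditions on $p$'' hides a circularity: you cannot first choose $p$ generically and then say that $f_\gamma(p(v),p(w);T,x)\neq 0$ at the redundant edges, because $(T,x)$ was produced from $p$.  Concretely, nothing you have written rules out the possibility that, at some stage, every $\gamma$-row at every redundant trivial-gain edge already lies in $V_k^\perp=\mathrm{rowspan}(R_k)$, while the rows that do escape $\mathrm{rowspan}(R_k)$ sit only over non-redundant edges (where swapping a row cannot raise the rank).

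The approach may well be salvageable --- for instance by fixing in advance a set of $\binom{d+1}{2}$ redundant edges and then arguing that the map from $(\gamma_1,\dots,\gamma_s,p)$ to the rank of the modified matrix attains $dN-k(\Gamma)$ on some nonempty Zariski-open set --- but that existence statement is exactly the crux, and as written your sketch does not supply it.
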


\begin{proof}
    Define $(H',\phi')$ to be the complete loopless $\Gamma$-gain graph with $d+1$ vertices,
    i.e.~$H'$ has vertices $v_0,\dots,v_d$,
    and for every pair of distinct vertices $v_i,v_j$ and every $\gamma \in \Gamma$,
    there exists an edge $(e,v_i,v_j)$ with $\phi'(e,v_i,v_j)= \gamma$ (unless $\gamma$ is trivial and $v_i=v_j$).
    Choose $p'$ to be a $\Gamma$-symmetrically rigid orbit placement of $(H',\phi')$ where the points $p(v_0),\ldots,p(v_d)$ are affinely independent.
    Using methods similar to those in the proof of \Cref{p:0extprob},
    we see that $(H',\phi',p')$ is $\Gamma$-symmetrically rigid.
    
    We now define the $\Gamma$-symmetric framework $(H,\phi,p)$ in the following way.
    The graph $H =(V,E)$ is a simple graph with
    \begin{align*}
        V := \{v_0,\ldots,v_d\} ~ \cup ~ \{ v_{i,\gamma} : 0\leq i \leq d, ~ \gamma \in \Gamma \}
    \end{align*}
    and, with $I$ denoting the $d \times d$ identity matrix,
    \begin{align*}
        E := &\{ v_{i,I} v_{j,\gamma} : 0\leq i, j \leq d,~ i \neq j, ~ \gamma \in \Gamma\} ~ \cup\\
             &\{ v_{i} v_{j} : 0\leq i <j \leq d \} ~ \cup \\
             &\{ v_{i} v_{j,\gamma} : 0\leq i ,j \leq d,~ i \neq j,  ~ \gamma \in \Gamma\}.
    \end{align*}
    To define the gain map $\phi$,
    for an edge $(e,v_{i,I},v_{j,\gamma})$ we set $\phi (e,v_{i,I},v_{j,\gamma}) = \gamma$. The remaining edges are assigned trivial gains.
    See \Cref{fig:gammah} for an example of $(H,\phi)$ for the case when $d=2$ and $\Gamma = \{I , -I\}$.
    Finally,
    we set $p(v_{i}) = p'(v_i)$ for every $v_{i} \in V$ and $p(v_{i,\gamma}) = p'(v_i)$ for every $v_{i,\gamma} \in V$.
    
    Let $u$ be a $\Gamma$-symmetric flex of $(H,\phi,p)$.
    For any $0\leq i ,j \leq d$ and $\gamma \in \Gamma$,
    the edge $v_{i,\gamma} v_j$ has trivial gain,
    hence $(p(v_{i,\gamma})-p(v_j))\cdot (u(v_{i,\gamma}) - u(v_j)) = 0$.
    By using the substitution $p(v_{i,\gamma})=p(v_i)$ and rearranging for each $j$, we see that $u(v_{i,\gamma})$ is the unique element that satisfies the $d$ affine conditions
    \begin{align*}
         (p(v_i)-p(v_j))\cdot u(v_{i,\gamma}) = (p(v_i)-p(v_j))\cdot u(v_j) \qquad \text{for all } 0 \leq j \leq d,~j\neq i.
    \end{align*}
    Hence, $u(v_{i,\gamma}) = u(v_{i})$ for all $\gamma \in \Gamma$.
    Choose any $0 \leq i,j \leq d$, $i \neq j$, and $\gamma \in \Gamma$.
    As $(v_{i,I} v_{j,\gamma}, v_{i,I}, v_{j,\gamma})$ is an edge with gain $\gamma$, it follows that
    \begin{align*}
         (p(v_i)- \gamma p(v_j))\cdot (u(v_i) - u(v_j)) = (p(v_{i,I})- \gamma p(v_{j,\gamma}))\cdot (u(v_{i,I}) - u(v_{j,\gamma})) = 0.
    \end{align*}
    Define the map $u: V(H') \rightarrow \mathbb{R}^d$ by setting $u'(v_i) := u(v_i)$.
    By our choice of $(H,\phi,p)$,
    $u'$ is a $\Gamma$-symmetric flex of $(H',\phi',p')$,
    hence there exists a skew-symmetric matrix $A$ and a point $x \in \mathbb{R}^d$ so that $u'(v_i) = Ap'(v_i) +x$.
    It follows that $u(v_i) = Ap(v_i) +x$ and $u(v_{i,\gamma}) = A p(v_{i,\gamma}) +x$ for all vertices $v_i,v_{i,\gamma}$ of $H$,
    hence $(H,\phi,p)$ is $\Gamma$-symmetrically rigid.
    Since $H$ is simple,
    $\mathbb{P}(H,\Gamma) <1$,
    thus $0< \mathbb{P}(H,\Gamma) <1$.
\end{proof}

\begin{figure}[ht]
	\begin{center}
		\begin{tikzpicture}[scale=1.7]
			\node[vertex,label={[labelsty]above:$v_0$}] (0) at (0,1) {};
			\node[vertex,label={[labelsty,label distance=-2pt]210:$v_1$}] (1) at (-0.866,-0.5) {};
			\node[vertex,label={[labelsty,label distance=-2pt]-30:$v_2$}] (2) at (0.866,-0.5) {};
			
 			\node[vertex,label={[labelsty]above:$v_{0,I}$}] (01) at (0,1*2) {};
 			\node[vertex,label={[labelsty,label distance=-2pt]210:$v_{1,I}$}] (11) at (-0.866*2,-0.5*2) {};
 			\node[vertex,label={[labelsty,label distance=-2pt]-30:$v_{2,I}$}] (21) at (0.866*2,-0.5*2) {};
			
 			\node[vertex,label={[labelsty]above:$v_{0,-I}$}] (0m1) at (0,1*3) {};
 			\node[vertex,label={[labelsty,label distance=-2pt]210:$v_{1,-I}$}] (1m1) at (-0.866*3,-0.5*3) {};
 			\node[vertex,label={[labelsty,label distance=-2pt]-30:$v_{2,-I}$}] (2m1) at (0.866*3,-0.5*3) {};
 			
			
			

			\draw[edge] (0)edge(1);
			\draw[edge] (1)edge(2);
			\draw[edge] (0)edge(2);
			
			\draw[edge] (1)edge(01);
			\draw[edge] (2)edge(01);
			
			\draw[edge] (1)edge(0m1);
			\draw[edge] (2)edge(0m1);
			
			\draw[edge] (0)edge(11);
			\draw[edge] (2)edge(11);
			
			\draw[edge] (0)edge(1m1);
			\draw[edge] (2)edge(1m1);
			
			\draw[edge] (0)edge(21);
			\draw[edge] (1)edge(21);
			
			\draw[edge] (0)edge(2m1);
			\draw[edge] (1)edge(2m1);
			
			\draw[edge] (01)edge(11);
			\draw[edge] (11)edge(21);
			\draw[edge] (21)edge(01);
			
			\draw[redge] (01)edge(1m1);
			\draw[redge] (01)edge(2m1);
			
			\draw[redge] (11)edge(0m1);
			\draw[redge] (11)edge(2m1);
			
			\draw[redge] (21)edge(0m1);
			\draw[redge] (21)edge(1m1);
		\end{tikzpicture}
	\end{center}
	\caption{Example of the graph constructed in \Cref{lem:gammah} with $d=2$ and $\Gamma = \{I,-I\}$. Edges with gain $-I$ are marked in red.}
	\label{fig:gammah}
\end{figure}
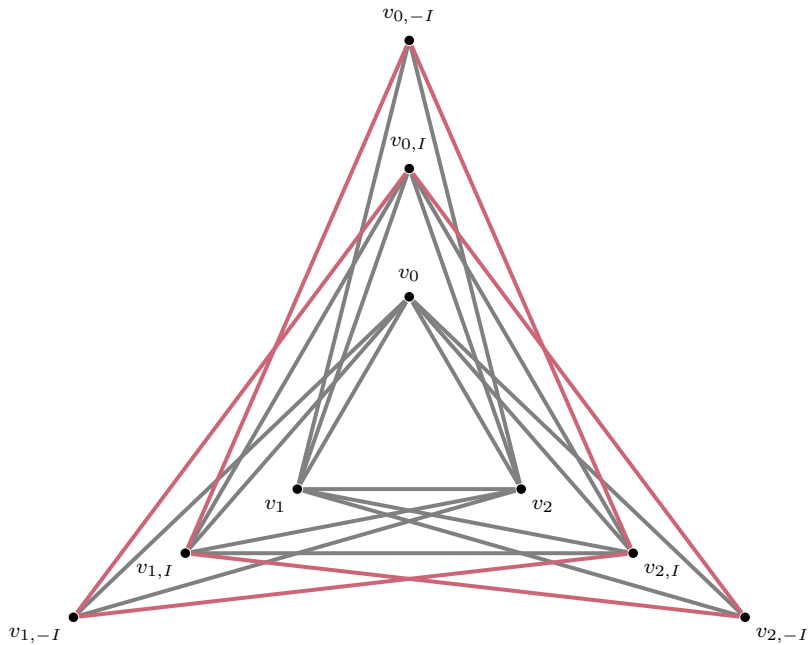

\begin{lemma}\label{lem:bigprob}
    Let $\Gamma \leq O(d)$ a finite group.
    For every $\varepsilon >0$,
    there exists $N \in \mathbb{N}$ so that for each $n \geq N$ there exists a multigraph on $n$ vertices with $1 -\varepsilon < \mathbb{P}(G,\Gamma) < 1$.
\end{lemma}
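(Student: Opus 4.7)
The strategy is to show that for a simple complete graph $K_N$ with $N$ sufficiently large, the probability $\mathbb{P}(K_N,\Gamma)$ lies in the open interval $(1-\varepsilon, 1)$, and then extend to any desired vertex count via probability-preserving $0$-extensions.

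First I would verify $\mathbb{P}(K_N,\Gamma)<1$ for every $N\geq d+1$. Since $K_N$ is simple, the trivial gain map $\phi\equiv I$ is a valid gain map, and its orbit rigidity matrix coincides with the ordinary rigidity matrix of $(K_N,p)$. For generic $p$ and $N\geq d+1$, this matrix has nullity $\binom{d+1}{2}$, strictly exceeding $k(\Gamma)$ for any non-identity group $\Gamma$ (the identity case being vacuous, as noted right after the definition of $\mathbb{P}$). Hence $\phi\equiv I$ is a $\Gamma$-symmetrically flexible gain assignment, giving $\mathbb{P}(K_N,\Gamma)<1$.

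Next I would show that $\mathbb{P}(K_N,\Gamma)\to 1$ as $N\to\infty$. Fix a generic orbit placement $p$. The framework $(K_N,\phi,p)$ is flexible precisely when $R(K_N,\phi,p)$ admits a non-trivial symmetric infinitesimal flex $u$ outside the $k(\Gamma)$-dimensional trivial subspace. For a fixed non-trivial direction $u$, an edge $e=vw$ contributes the constraint
\[
(p(v)-\gamma\, p(w))\cdot u(v) + (p(w)-\gamma^{-1}\, p(v))\cdot u(w) = 0
\]
on $\gamma=\phi(e)\in O(d)$; for generic $p$ this is a non-identically-vanishing polynomial equation on $O(d)$, hence satisfied by at most $|\Gamma|-1$ elements of the finite group $\Gamma$. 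Since gains on distinct edges are independent, the fraction of gain maps for which $u$ is a flex is at most $(1-1/|\Gamma|)^{\binom{N}{2}}$. Covering the unit sphere of the flex space by a $\delta$-net of size $\exp(O(N))$, using continuity of the ``is a flex'' condition to transfer the bound from net points to nearby directions, and taking a union bound yields
\[
1-\mathbb{P}(K_N,\Gamma) \;\leq\; \exp(O(N))\cdot (1-1/|\Gamma|)^{\binom{N}{2}},
\]
which tends to $0$ as $N\to\infty$ because the $\exp(-\Theta(N^2))$ per-direction decay dominates the $\exp(O(N))$ net size.

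Finally, choosing $N$ large enough that $\mathbb{P}(K_N,\Gamma)>1-\varepsilon$, I apply iterated $0$-extensions via \Cref{p:0extprob}, which preserve $\mathbb{P}$ exactly, to reach any vertex count $n\geq N$. The principal obstacle is the middle step: making the covering/union-bound argument rigorous, in particular uniformly bounding the number of ``bad'' gains per edge across all non-trivial directions $u$, and controlling the discretisation of the flex space so that the super-exponential decay in $\binom{N}{2}$ genuinely dominates the net cardinality.
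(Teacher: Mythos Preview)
Your middle step has a genuine gap that I do not see how to repair. The per-edge claim---that for fixed non-trivial $u$ and generic $p$, the flex constraint on an edge $vw$ is satisfied by at most $|\Gamma|-1$ elements of $\Gamma$---is false as stated: take any non-trivial $u$ with $u(v)=u(w)=0$, and the constraint vanishes for every $\gamma\in\Gamma$. More broadly, a non-zero polynomial on $O(d)$ can certainly vanish on all of a prescribed finite subset, so the inference ``non-identically-vanishing $\Rightarrow$ misses some point of $\Gamma$'' is invalid. Since your union bound must cover \emph{all} non-trivial directions $u$, not merely generic ones, the per-direction estimate $(1-1/|\Gamma|)^{\binom{N}{2}}$ is unavailable. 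The net argument compounds the problem: ``$Ru=0$'' is a closed, measure-zero condition in $u$ for each fixed $\phi$, so a net point near a genuine flex need not itself be a flex, and controlling $\Pr[Ru'=0]$ over the net says nothing about $\Pr[\exists u:\,Ru=0]$. The standard fix from random matrix theory---bound $\Pr[\|Ru\|<\delta]$ and invoke small-ball anti-concentration of the entries---does not apply here, because the entries of $R$ are drawn from a fixed finite set determined by $\Gamma$.

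The paper sidesteps all of this with a constructive boosting argument. It first invokes \Cref{lem:gammah} to obtain a simple graph $\tilde H$ with $0<\mathbb{P}(\tilde H,\Gamma)<1$, then augments it by $d$ anchor vertices $w_1,\dots,w_d$ adjacent to every vertex of $\tilde H$ to form $H$ (still with $0<\mathbb{P}(H,\Gamma)<1$ by \Cref{p:0extprob} and simplicity). It then glues $m$ copies $H_1,\dots,H_m$ of $H$ along the shared anchors $w_1,\dots,w_d$. Because every vertex of each $H_i$ is adjacent to all anchors, rigidity of any single $(H_i,\phi|_{H_i})$ forces rigidity of the whole gain graph; since the restrictions $\phi|_{H_i}$ are independent, the flexibility probability is at most $(1-\mathbb{P}(H,\Gamma))^m$, which is below $\varepsilon$ for $m$ large. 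Your final step---extending to arbitrary $n\geq N$ via \Cref{p:0extprob}---matches the paper's.
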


\begin{proof}
    Fix $\varepsilon$.
    By \Cref{lem:gammah}, there exists a simple graph $\tilde{H}$ where $0< \mathbb{P}(\tilde{H},\Gamma) < 1$ with vertices $\{v_1,\ldots, v_D\}$ (with $D >d$).
    Now define $H$ to be the graph formed from $\tilde{H}$ by adding $d$ vertices $w_1,\ldots,w_d$ and every edge of the form $w_i v_j$ for $1 \leq i \leq d$ and $1 \leq j \leq D$.
    Importantly, $0<\mathbb{P}(H,\Gamma) <1$;
    we have that $\mathbb{P}(H,\Gamma) \geq \mathbb{P}(\tilde{H},\Gamma)$ by \Cref{p:0extprob},
    and we must have $\mathbb{P}(H,\Gamma) < 1$ since $H$ is a simple graph.
    Choose $m \in \mathbb{N}$ so that $(1- \mathbb{P}(H,\Gamma))^m < \varepsilon$.
    For each $1\leq i \leq m$,
    define $H_i$ to be a graph isomorphic to $H$ with vertices $\{v_1^i,\ldots, v_D^i, w_1,\ldots,w_d\}$;
    note that we have $V(H_i) \cap V(H_j) = \{w_1,\ldots,w_d\}$ for all $i \neq j$.
    Now define $G_m$ to be the graph formed by taking the union of the graphs $H_1,\ldots,H_m$ (see \Cref{fig:bigprobgraph}).
    
    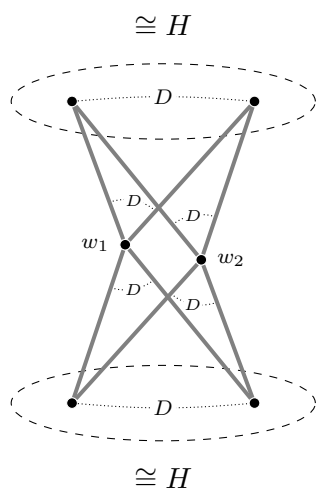
\begin{figure}[ht]
        \centering
        \begin{tikzpicture}[mult/.style={draw,densely dotted,angle radius=0.6cm,font=\tiny,pic text =$D$,angle eccentricity=1,pic text options={circle,fill=white,inner sep=0pt}},multvert/.style={font=\scriptsize,fill=white,inner sep=1pt}]
            \node[vertex,label={[labelsty]left:$w_1$}] (c1) at (-0.5,0.1) {};
            \node[vertex,label={[labelsty]right:$w_2$}] (c2) at (0.5,-0.1) {};
            \node[vertex] (h11) at (-1.2,2) {};
            \node[vertex] (h12) at (1.2,2) {};
            \node[vertex] (h21) at (-1.2,-2) {};
            \node[vertex] (h22) at (1.2,-2) {};
            \draw[dashed] ($(h11)!0.5!(h12)$) circle [x radius=2cm,y radius=0.5cm];
            \draw[dashed] ($(h21)!0.5!(h22)$) circle [x radius=2cm,y radius=0.5cm];
            
            \draw pic [mult] {angle = h12--c1--h11};
            \draw pic [mult] {angle = h12--c2--h11};
            \draw pic [mult] {angle = h21--c1--h22};
            \draw pic [mult] {angle = h21--c2--h22};
            \foreach \p in {h11,h12,h21,h22} 
            {
                \draw[edge] (c1)edge(\p) (c2)edge(\p);
            }
            \draw[densely dotted] (h11)to[bend left=5] node[multvert] {$D$} (h12);
            \draw[densely dotted] (h21)to[bend right=5] node[multvert] {$D$} (h22);
            \node[] at ($(h11)!0.5!(h12)+(0,1)$) {$\cong H$};
            \node[] at ($(h21)!0.5!(h22)+(0,-1)$) {$\cong H$};
        \end{tikzpicture}
        \caption{Construction of $G_m$ for $d=2$.}
        \label{fig:bigprobgraph}
    \end{figure}
    
    Choose any gain map $\phi$ of $G_m$.
    We note that if any gain graph $(H_i,\phi|_{H_i})$ is $\Gamma$-symmetrically rigid, then $(G_m,\phi)$ is $\Gamma$-symmetrically rigid also.
    This holds since every vertex $v_k^j$ is connected to each of the vertices $w_1,\ldots,w_d$.
    Hence, $1- \mathbb{P}(G_m,\Gamma) < (1- \mathbb{P}(H,\Gamma))^m$.
    Since $G_m$ is a simple graph,
    it follows that $1 - \varepsilon < \mathbb{P}(G_m,\Gamma) <1$ as required.
    
    Now set $N = |V(G_m)|$.
    For every $n >N$,
    we can form a graph $G$ with $\mathbb{P}(G,\Gamma) = \mathbb{P}(G_m,\Gamma)$ by applying \Cref{p:0extprob} $n-N$ times.
\end{proof}

We are now ready to prove the main result of this section.

\begin{proof}[Proof of \Cref{thm:qepsilon}]
    Let $\varepsilon>0$. By \Cref{lem:bigprob}, there exists a multigraph $H$ with $\mathbb{P}(H,\Gamma) = 1-\delta$, for some $\delta\in (0,\varepsilon)$. By applying \Cref{lem:smallprob} to copies of $H$,
    we can obtain for every $k\in \mathbb{N}$ a multigraph $G$ with $\mathbb{P}(G,\Gamma) = \mathbb{P}(H,\Gamma)^k = (1-\delta)^k$. Define $a_k=(1-\delta)^k$. Since for every $k$ we have $a_k-a_{k+1}\in (0,\delta)$ and $a_k\rightarrow 0$ as $k \rightarrow \infty$, there exists $m \in \mathbb{N}$ such that $|a_m-q|<\varepsilon$.
    By applying \Cref{lem:smallprob} to copies of $H$,
    we can obtain a multigraph $G$ with $\mathbb{P}(G,\Gamma) = \mathbb{P}(H,\Gamma)^m = (1-\delta)^m$.
    The result now follows by applying \Cref{p:0extprob}.
\end{proof}

\subsection*{Acknowledgements}

Parts of this research was completed during the Fields Institute Thematic Program on Geometric constraint
systems, framework rigidity, and distance geometry. SD and GG were partially supported by the Austrian Science Fund (FWF): P31888.
AN was partially supported by the Heilbronn institute for mathematical research.

\bibliographystyle{plainurl}
\bibliography{lit}

\end{document}